\documentclass[reqno,12pt]{amsart}

\usepackage{amsmath,amsthm,amssymb,amsfonts,amscd,comment}
\usepackage{xypic}

%\usepackage{graphicx}
%\input xy
%\xyoption{all}

\setlength{\topmargin}{0cm}
\setlength{\headheight}{1cm}
\setlength{\oddsidemargin}{0cm}
\setlength{\evensidemargin}{0cm}
\setlength{\textheight}{23cm}
\setlength{\textwidth}{16cm}
\setlength{\parindent}{0.5cm}

\raggedbottom
%-----------------------------------------------------------------------------------------------------------
%usage: \begin{namelist}{width}

%-----------------------------------------------------------------------------------------------------------
\theoremstyle{plain} 
\newtheorem{thm}{Theorem}[section]
\newtheorem*{thm*}{Theorem}
\newtheorem{cor}[thm]{Corollary}
\newtheorem{lem}[thm]{Lemma}
\newtheorem{prop}[thm]{Proposition}
\newtheorem{conj}[thm]{Conjecture}
\newtheorem*{conj*}{Conjecture}

\newtheorem{dfn}[thm]{Definition}
	
\theoremstyle{definition}
%[section]
\newtheorem{eg}[thm]{Example}

\newtheorem{rem}[thm]{Remark}	
\theoremstyle{remark}

\numberwithin{equation}{section}
%-----------------------------------------------------------------------------------------------------------

\def\ZZ{{\mathbb Z}}
\def\QQ{{\mathbb Q}}
\def\RR{{\mathbb R}}
\def\CC{{\mathbb C}}

\def\OO{{\mathcal O}}

\def\PP{{\mathbb P}}

\def\A{{\mathcal A}}
\def\B{{\mathcal B}}
\def\C{{\mathcal C}}
\def\D{{\mathcal D}}
\def\E{{\mathcal E}}
\def\F{{\mathcal F}}

\def\M{{\mathcal M}}

\def\O{{\mathcal O}}
\def\P{{\mathcal P}}

\def\T{{\mathcal T}}
\def\U{{\mathcal U}}

\newcommand{\ch}{\mathrm{ch}}
\newcommand{\iso}{\xrightarrow{\sim}}
\newcommand{\chb}{\mathrm{ch}^{\beta}_{\mathcal{B}^l_0}}
\newcommand{\chbo}{\mathrm{ch}^{\beta}_{\mathcal{B}^l_0,0}}
\newcommand{\chbi}{\mathrm{ch}^{\beta}_{{\mathcal{B}^l_0},1}}
\newcommand{\chbii}{\mathrm{ch}^{\beta}_{{\mathcal{B}^l_0},2}}
\newcommand{\chbiii}{\mathrm{ch}^{\beta}_{{\mathcal{B}^l_0},3}}
\newcommand{\Aut}{\mathrm{Aut}}
\newcommand{\FM}{\mathrm{FM}}
\newcommand{\Stab}{\mathrm{Stab}}

\newcommand{\Hom}{{\rm Hom}}

%\newcommand{\mod}{{\rm mod}}

%-----------------------------------------------------------------------------------------------------------

%-----------------------------------------------------------------------------------------------------------
\def \mf#1#2#3#4{
\xymatrix{
{#1}\  \ar@<0.4ex>[r]^{{#2}} & \ {#4}
\ar@<0.4ex>[l]^{{#3}}
}
}

\def \mfs#1#2#3#4{\!
\xymatrix@C=1,5em{{#1} \! \ar@<0.2ex>[r]^{{#2}} & \! {#4}
\ar@<0.2ex>[l]^{{#3}}
}
\!}

\def \mfl#1#2#3#4{
\xymatrix@C=2.6em{{#1}\  \ar@<0.4ex>[r]^{{#2}} &\  {#4}
\ar@<0.2ex>[l]^{{#3}}
}
}

\def \mfss#1#2#3#4{\!
\xymatrix@C=1.5em{{#1} \ar@<0.3ex>[r]^{{#2}} & {#4}
\ar@<0.3ex>[l]^{{#3}}
}
\!}
%-------------------------------------------------------------------------------------------------------------
%\pagestyle{myheadings}{empty}
%%%%%%%%%%%%%%%%%%%%%%%%%%%%%%%%%%%%%%%%%%
\begin{document}
\title{Automorphism groups of cubic fourfolds and K3 categories}
\author{Genki Ouchi}
\address{Interdisciplinary Theoretical and Mathematical Sciences Program, RIKEN, 2-1 Hirosawa, Wako, Saitama, 351-0198, Japan}
\email{genki.ouchi@riken.jp}
\begin{abstract}
In this paper, we study relations between automorphism groups of cubic fourfolds and Kuznetsov components. Firstly, we characterize automorphism groups of cubic fourfolds as subgroups of autoequivalence groups of Kuznetsov components using Bridgeland stability conditions. Secondly, we compare automorphism groups of cubic fourfolds with automorphism groups of their associated K3 surfaces. Thirdly, we note that the existence of a non-trivial symplectic automorphism on a cubic fourfold is related to the existence of associated K3 surfaces.
\end{abstract}
\maketitle

\section{Introduction}
\subsection{Background and results}
In this paper, we study relations between symmetries of cubic fourfolds and symmetries of K3 surfaces. Finite symmetries of K3 surfaces are related to sporadic finite simple groups as the Mathieu groups $M_{23}, M_{24}$ and the Conway groups $\mathrm{Co}_0, \mathrm{Co}_1$ in both mathematics and physics. Mukai \cite{Muk} proved that finite groups of symplectic automorphisms of K3 surfaces are certain subgroups of the Mathieu group $M_{23}$. In the context of physics, Eguchi, Ooguri and Tachikawa \cite{EOT} found Mathieu moonshine phenomena for the elliptic genera of K3 surfaces. This is the mysterious relation between the elliptic genera of K3 surfaces and the Mathieu group $M_{24}$. After \cite{EOT}, Gaberdiel, Hohenegger and Volpato \cite{GHV} studied symmetries of K3 sigma models. From the mathematical point of view, Gaberdiel, Hohenegger and Volpato \cite{GHV} compared symmetries of the Mukai lattice with the symmetries of the Leech lattice.  Huybrechts \cite{Huy} interpreted the symmetries of the Mukai lattice in \cite{GHV} as the symmetries of derived categories of K3 surfaces and stability conditions.
We recall the precise statement of Huybrechts's theorem in \cite{Huy}. For a K3 surface $S$, let $\Stab^*(S)$ be the distinguished connected component of the space of stability conditions on the derived category $D^b(S)$ of coherent sheaves on $S$ as in \cite{Bri08}.  For a K3 surface $S$ and a stability condition $\sigma \in \Stab^*(S)$, denote the group of symplectic autoequivalences of $D^b(S)$ fixing $\sigma$ by $\Aut_{\mathrm{s}}(D^b(S),\sigma)$ (Definition \ref{stabilizer K3}). The Conway group $\mathrm{Co}_0$ is the automorphism group of the Leech lattice $N$. The Conway group $\mathrm{Co}_1=\mathrm{Co}_0/\{\pm1\}$ is known as one of sporadic finite simple groups. 
Huybrechts \cite{Huy} proved the analogue of Mukai's theorem in \cite{Muk}.
\newpage
\begin{thm}[\cite{Huy}]\label{Conway}
Let $G$ be a group.
The following are equivalent.
\begin{itemize}
\item[(1)]There is a K3 surface $S$ and a stability condition $\sigma \in \Stab^*(S)$ such that $G$ can be embedded into $\Aut_{\mathrm{s}}(D^b(S),\sigma)$.
\item[(2)]The group $G$ can be embedded into the Conway group $\mathrm{Co}_0=\mathrm{O}(N)$ such that $\mathrm{rk}(N^G) \geq 4$. 
\end{itemize}
\end{thm}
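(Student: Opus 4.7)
The plan is to translate condition (1) into a lattice-theoretic statement about a positive 4-plane in the Mukai lattice, and then compare its orthogonal complement with the Leech lattice $N$. The first observation I would establish is that a stability condition $\sigma \in \Stab^*(S)$ determines a positive-definite real 4-plane $P(\sigma) \subset \widetilde{H}(S,\mathbb{R})$: two dimensions come from the real and imaginary parts of the central charge vector of $\sigma$, and the other two from the real and imaginary parts of a generator of $H^{2,0}(S)$. An autoequivalence lies in $\Aut_{\mathrm{s}}(D^b(S),\sigma)$ precisely when its induced action on $\widetilde{H}(S,\mathbb{Z})$ is a Hodge isometry that is symplectic (so it fixes $H^{2,0}$, hence the second 2-plane) and preserves the central charge (hence the first 2-plane), so it fixes $P(\sigma)$ pointwise.

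For $(1)\Rightarrow(2)$, given $G \hookrightarrow \Aut_{\mathrm{s}}(D^b(S),\sigma)$, I would set $T := P(\sigma)^{\perp}\cap \widetilde{H}(S,\mathbb{Z})$, a $G$-invariant negative-definite lattice of rank $20$. The main lattice-theoretic step is a Nikulin-style gluing that produces a $G$-equivariant primitive embedding of $T$ into the Leech lattice $N$ (rank $24$, negative definite); extending trivially on a rank-$4$ complement yields $G \hookrightarrow \mathrm{O}(N) = \mathrm{Co}_0$ with $\mathrm{rk}(N^G) \geq 4$.

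For $(2)\Rightarrow(1)$, starting from $G \hookrightarrow \mathrm{Co}_0$ with $\mathrm{rk}(N^G) \geq 4$, I would consider the $G$-invariant negative-definite complement $M := (N^G)^{\perp} \subset N$, on which $G$ acts without fixed vectors and which has rank at most $20$. Using Nikulin's existence criterion for primitive embeddings, I would realize $M$ as the orthogonal complement of a positive $4$-plane $P$ in the Mukai lattice $\widetilde{\Lambda} = U^{\oplus 4}\oplus E_8(-1)^{\oplus 2}$. I would then equip $P$ with a K3-type Hodge structure whose $(2,0)$-line and central charge vector are $G$-fixed; the surjectivity of the period map produces a K3 surface $S$ with $\widetilde{H}(S,\mathbb{Z}) \cong \widetilde{\Lambda}$, and Bridgeland's description of $\Stab^*(S)$ yields a compatible $\sigma$. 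A derived Torelli argument then lifts the lattice-level $G$-action to symplectic autoequivalences of $D^b(S)$ fixing $\sigma$.

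The hard part is the lattice gluing with $N$. In $(1)\Rightarrow(2)$ one must promote an abstract $G$-invariant negative-definite rank-$20$ sublattice to a $G$-equivariant primitive embedding into the extremely rigid Leech lattice, and it is precisely this step that forces the Conway group to appear rather than some larger orthogonal group. In $(2)\Rightarrow(1)$ the subtlety is realizability: not every positive $4$-plane arises from a genuine K3 surface together with a stability condition in the distinguished component, so a density or deformation argument within the period domain is needed to guarantee the geometric realization.
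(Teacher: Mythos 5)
First, a point of comparison: the paper does not prove Theorem \ref{Conway} at all --- it is quoted from Huybrechts \cite{Huy} as background --- so there is no in-paper proof to match your sketch against. Measured against Huybrechts's actual argument, your overall architecture is the right one: condition (1) is translated into the statement that $G$ fixes pointwise a positive-definite $4$-plane $\Pi_\sigma=P_S\oplus P_\sigma$ in $\widetilde{H}(S,\RR)$ (this is exactly Proposition \ref{symmetry lattice} in the paper), and the equivalence with (2) is then a lattice-theoretic comparison with the Leech lattice, with the converse using the surjectivity of the period map, derived Torelli, and the covering $\pi_S:\Stab^*(S)\to\P^+_0(S)$ of Theorem \ref{BriK3}.

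The genuine gap sits precisely in the step you flag as ``the hard part'' but leave unjustified: you never isolate the property that makes the embedding into the Leech lattice possible, namely that the relevant lattice contains \emph{no $(-2)$-vectors}. An arbitrary $G$-invariant even negative-definite lattice of rank $\leq 20$ does \emph{not} embed into $N$ --- anything containing a copy of $E_8$ cannot, since the Leech lattice has no roots --- so ``a Nikulin-style gluing'' as you state it would simply fail. The missing observation is: a $(-2)$-class orthogonal to $\Pi_\sigma$ is orthogonal to $P_S$, hence lies in $\widetilde{H}^{1,1}(S,\ZZ)$, and is orthogonal to $P_\sigma$, i.e.\ to $\pi_S(\sigma)$, contradicting $\pi_S(\sigma)\in\P^+_0(S)$ --- and this is the \emph{only} place the hypothesis $\sigma\in\Stab^*(S)$ enters. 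It is also the reason the Conway group appears at all: the Leech lattice is characterized among even unimodular negative-definite rank-$24$ lattices as the root-free one, and the gluing (due to \cite{GHV} and \cite{Huy}) must choose the complementary summand so that no roots are created. Two further corrections: the object to embed is the coinvariant lattice $\bigl(\widetilde{H}(S,\ZZ)^G\bigr)^\perp$, not ``$P(\sigma)^\perp\cap\widetilde{H}(S,\ZZ)$ of rank $20$'' --- the integral orthogonal complement of a real $4$-plane generically has rank far below $20$, since $P_S$ is irrational for most K3 surfaces. And in the converse direction, derived Torelli only produces autoequivalences permuting the fiber of $\pi_S$ over $\pi_S(\sigma)$; to make them fix $\sigma$ itself one must correct by deck transformations and then invoke Proposition \ref{symmetry lattice} to see the resulting assignment is a group embedding.
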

 Gaberdiel, Hohenegger and Volpato \cite{GHV} computed the automorphism group of the Gepner model $(1)^6$. The automorphism group of the Gepner model $(1)^6$ is same as the symplectic automorphism group of the Fermat cubic fourfold. Cheng, Ferrari, Harrison and Paquette \cite{CFHP} studied Landau-Ginzburg models related to homogeneous polynomials of degree three with six variables. In \cite{CFHP}, symplectic automorphism groups of cubic fourfolds appeared as the symmetries of K3 sigma models. Recently, Laza and Zheng \cite{Laza} classified symplectic automorphism groups of cubic fourfolds.  In particular, symplectic automorphism groups of cubic fourfolds are certain subgroups of the Conway group $\mathrm{Co}_0$. The first main result of our paper is that the interpretation of automorphism groups of cubic fourfolds as certain subgroups of autoequivalence groups of Kuznetsov components of cubic fourfolds (cf. Theorem \ref{Conway}). Kuznetsov \cite{Kuz} studied semi-orthogonal decompositions of derived categories of cubic fourfolds.  For a cubic fourfold $X$, there is a semi-orthogonal decomposition 
\[D^b(X)=\langle \D_X, \OO_X, \OO_X(1), \OO_X(2) \rangle.\]
The admissible subcategory $\D_X$ of the derived category $D^b(X)$ of $X$ is called the Kuznetsov component of $X$. The kuznetsov component $\D_X$ of $X$ is a two dimensional Calabi-Yau category \cite{Kuz}.
By Orlov's theorem \cite{Orl}, the Kuznetsov component of a cubic fourfold $X$ is equivalent to the category of graded matrix factorizations of the defining polynomial of $X$.
Kuznetsov components of cubic fourfolds share various properties with derived categories of K3 surfaces. Addington and Thomas \cite{AT} introduced weight two Hodge structures on Mukai lattices for Kuznetsov components of cubic fourfolds. Then we have the notion of symplectic autoequivalences of  Kuznetsov components of cubic fourfolds. Bayer, Lahoz, Macr\`{i} and Stellari \cite{BLMS} constructed stability conditions on Kuznetsov components of cubic fourfolds.  We give the statement of the second result in our paper. Let $X$ be a cubic fourfold. Denote the stability condition on $\D_X$ constructed in \cite{BLMS} by $\sigma$ (Theorem \ref{BLMS stability}). As \cite{Huy}, we consider the group $\Aut(\D_X,\sigma)$ (resp. $\Aut_{\mathrm{s}}(\D_X,\sigma)$) of autoequivalences (resp. symplectic autoequivalences) of Fourier-Mukai type on $\D_X$ (Definition \ref{FM-type}, Definition \ref{stabilizer cubic}). The first result in our paper is as follow.

\begin{thm}\label{intromain1}
The group homomorphism \[\Aut(X) \to \Aut(\D_X), f \mapsto f_*\] induces the isomorphisms of groups 
\[\Aut(X) \iso \Aut(\D_X,\sigma), \]
\[\Aut_{\mathrm{s}}(X) \iso \Aut_{\mathrm{s}}(\D_X, \sigma),\]
where $\Aut_{\mathrm{s}}(X)$ is the symplectic automorphism group of $X$.
\end{thm}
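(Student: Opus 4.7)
The plan, in three main steps. First, I would verify that the assignment $f\mapsto f_*$ actually lands in $\Aut(\D_X,\sigma)$. Any $f\in\Aut(X)$ is the restriction of a linear automorphism of $\PP^5$ fixing $\O_X(1)$, so $f_*$ preserves the Kuznetsov semi-orthogonal decomposition, restricts to an autoequivalence of $\D_X$, and is of Fourier--Mukai type with kernel the structure sheaf of the graph of $f$. Because the construction of $\sigma$ in \cite{BLMS} depends only on the pair $(\D_X,\O_X(1))$, it is automatically fixed by $f_*$.

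For injectivity, if $f_*\cong\mathrm{id}_{\D_X}$, then combining this with $f^*\O_X(i)\cong\O_X(i)$ for $i=0,1,2$ yields $f_*\cong\mathrm{id}_{D^b(X)}$, and the Bondal--Orlov reconstruction theorem then forces $f=\mathrm{id}_X$.

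The heart of the proof is surjectivity. Given $\Phi\in\Aut(\D_X,\sigma)$, I would use the projections $P_\ell:=\mathrm{pr}(\O_\ell(1))\in\D_X$ of the twisted structure sheaves of lines $\ell\subset X$ as a distinguished family of $\sigma$-stable objects; these share a common primitive Mukai vector $v$, and, via a moduli interpretation based on the BLMS construction \cite{BLMS}, the Fano variety of lines $F(X)$ appears as the moduli space of $\sigma$-stable objects with Mukai vector $v$. Because $\Phi$ preserves $\sigma$, its induced action on the algebraic Mukai lattice fixes the central charge; combined with primitivity and with the specific numerical invariants of $v$, this forces $\Phi_*v=v$, so $\Phi$ sends the family $\{P_\ell\}_{\ell\in F(X)}$ to itself and induces an automorphism of $F(X)$. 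Via the identification $\Aut(F(X))\cong\Aut(X)$ for cubic fourfolds, this automorphism comes from a unique $g\in\Aut(X)$, and the composition $g_*^{-1}\circ\Phi$ then fixes every $P_\ell$ and acts trivially on the algebraic Mukai lattice; a kernel/universal-family argument shows it is isomorphic to the identity, giving $\Phi=g_*$. The symplectic statement then follows because, under the Hodge-theoretic identification of \cite{AT} between $H^{3,1}(X)$ and the $(2,0)$-part of the Mukai lattice of $\D_X$, $f\in\Aut(X)$ is symplectic iff $f_*$ is a symplectic autoequivalence of $\D_X$.

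The main obstacle, I expect, is the moduli-theoretic step within surjectivity: showing that $\Phi_*$ fixes the Mukai vector $v$ rather than merely mapping it to another class with the same central charge, that the induced bijection on $F(X)$ is a regular automorphism, and that $g_*^{-1}\circ\Phi$ acts trivially enough on the Mukai lattice to force it to be the identity autoequivalence; all three points depend on the specific features of the BLMS stability condition and on the rigidity of the universal family of projections of lines.
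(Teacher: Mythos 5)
Your outline matches the paper's strategy in broad strokes (the BLMS stability condition is fixed by pushforwards, the Fano variety of lines is realized as a moduli space of $\sigma$-stable objects, and one reduces to an autoequivalence acting trivially on cohomology), but the step you yourself flag as the ``main obstacle'' is where the real content lies, and your proposal does not supply the idea that makes it work. After producing $g\in\Aut(X)$ and replacing $\Phi$ by $\Psi:=g_*^{-1}\circ\Phi$, you know that $\Psi$ fixes $\sigma$, acts trivially on $H^*(\D_X,\ZZ)$, and --- after invoking the Beauville/Hassett--Tschinkel faithfulness of $\Aut$ on $H^2$ for hyperk\"ahler manifolds of K3 type, plus Verbitsky's injectivity of $\mathrm{Sym}^2H^2\to H^4$ to kill a line-bundle ambiguity in the universal family --- satisfies $\Psi\circ P_*\simeq P_*$ and $P^*\circ\Psi\simeq P^*$, where $P^*,P_*$ are the adjoint functors attached to the universal family over $F(X)$. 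The problem is that $P^*$ is \emph{not} fully faithful: by Addington's computation $P_*\circ P^*\simeq\mathrm{id}_{\D_X}\oplus[-2]$, so ``$\Psi$ fixes every $P_\ell$ and acts trivially on the Mukai lattice'' does not formally yield $\Psi\simeq\mathrm{id}_{\D_X}$. The paper closes exactly this gap with a descent argument: the adjunction $(P^*\dashv P_*)$ defines a comonad on $D^b(F(X))$; the unit $\mathrm{id}\to P_*P^*$ is a split mono by Addington's splitting, so Elagin's comparison theorem makes the comparison functor $\Gamma_P:\D_X\to D^b(F(X))_{\mathbb{T}(P)}$ an equivalence, and since $\Gamma_P\circ\Psi$ is again a comparison functor, uniqueness of comparison functors forces $\Psi\simeq\mathrm{id}_{\D_X}$. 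Some mechanism of this kind (or another rigidity statement for Fourier--Mukai kernels) is indispensable and is absent from your sketch.

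Two smaller points. First, your injectivity argument is incomplete: an autoequivalence of $D^b(X)$ that restricts to the identity on every piece of a semiorthogonal decomposition need not be isomorphic to the identity, so you cannot pass from $f_*|_{\D_X}\simeq\mathrm{id}$ to $f_*\simeq\mathrm{id}_{D^b(X)}$ and then invoke Bondal--Orlov. The paper instead uses that $f_*\circ\mathrm{pr}(\O_x)\simeq\mathrm{pr}(\O_{f(x)})$ together with the lemma from \cite{O} that $x\mapsto\mathrm{pr}(\O_x)$ is injective on isomorphism classes; alternatively one can note that $f_*|_{\D_X}\simeq\mathrm{id}$ forces $f^*=\mathrm{id}$ on $H^4_{\mathrm{prim}}(X,\ZZ)$ and apply Torelli. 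Second, the identification $\Aut(F(X))\cong\Aut(X)$ is not true for arbitrary automorphisms of $F(X)$ (for Pfaffian cubics $F(X)$ is a Hilbert square of a K3 surface and can carry extra automorphisms); what you actually need, and what the paper uses, is Voisin's Torelli theorem applied to the Hodge isometry of $H^4_{\mathrm{prim}}(X,\ZZ)$ obtained via $A_2^\perp$, which is available because the induced automorphism of $F(X)$ respects the relevant polarization.
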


By Theorem \ref{intromain1}, we can regard the (symplectic) automorphism groups of cubic fourfolds as symmetries of two dimensional Calabi-Yau categories. 

The second result of our paper is the comparison of automorphisms of cubic fourfolds with automorphisms of K3 surfaces. Relations between cubic fourfolds and K3 surfaces are studied via Hodge theory \cite{Has00}, derived categories \cite{Kuz} and geometry of irreducible holomorphic symplectic manifolds \cite{BD}, \cite{Add}. From point of view of Hodge theory, a labeled cubic fourfold is often related to a polarized K3 surface \cite{Has00}.  For a positive integer $d$, a labeled cubic fourfold $(X,K)$ of discriminant $d$ is a pair of a cubic fourfold $X$ and a rank two primitive sublattice $K \subset H^{2,2}(X,\ZZ)$ such that the lattice $K$ contains the square $H^2$ of the hyperplane class $H$ and the discriminant of $K$ is equal to $d$. Hassett \cite{Has00} introduced the two arithmetic conditions on an integer $d$ as follow.
 \begin{itemize}
\item[($*$):] $d>6$ and $d \equiv 0$ or $2$ (mod $6$)
\item[($**$):]  $d$ is not divisible by $4$, $9$, or any odd prime $p \equiv 2$ (mod $3$)
\end{itemize} 
Hassett \cite{Has00} proved that for a labeled cubic fourfold $(X,K)$ of discriminant $d$,  the integer $d$ satisfies $(*)$ and $(**)$ if and only if there is a polarized K3 surface $(S,h)$ of degree $d$ such that a Hodge isometry $K^\perp(-1) \simeq H^2_{\mathrm{prim}}(S,\ZZ)$ exists, where $K^\perp$ is the orthogonal complement of the sublattice $K$ in the middle cohomology group $H^4(X,\ZZ)$ of $X$ and the primitive cohomology  $H^2_{\mathrm{prim}}(S,\ZZ)$ of $S$ is the orthogonal complement of the ample class $h$ in the second cohomology $H^2(S,\ZZ)$ of $S$.
On the other hand, Kuznetsov \cite{Kuz} proposed the following conjecture for derived categories of cubic fourfolds and K3 surfaces.

\begin{conj}[\cite{Kuz}]\label{rationality}
Let $X$ be a cubic fourfold. Then $X$ is rational if and only if there is a K3 surface $S$ such that $\D_X \simeq D^b(S)$.
\end{conj}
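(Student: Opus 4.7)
This is Kuznetsov's rationality conjecture, which is open in both directions; any plan I can sketch is necessarily speculative, so I will describe the general shape a proof would have to take rather than claim a route to completion.

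For the forward direction, ``$X$ rational implies $\D_X \simeq D^b(S)$'', the natural strategy is to make the birational map $X \dashrightarrow \PP^4$ categorical. One resolves it to a diagram $X \leftarrow \widetilde{X} \rightarrow \PP^4$ by a sequence of blowups along smooth centers, and then uses Orlov's blowup and projective bundle formulas to compare the resulting semiorthogonal decompositions of $D^b(\widetilde{X})$. Matching this with $D^b(X) = \langle \D_X, \O_X, \O_X(1), \O_X(2) \rangle$ and with the standard Beilinson decomposition of $D^b(\PP^4)$, one hopes to isolate $\D_X$ as the derived category of one of the blowup centers, which should turn out to be a K3 surface. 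This is exactly the shape of the equivalence in every rational example where an associated K3 is known (Pfaffian cubics via Beauville--Donagi, cubics containing a plane with an odd-degree multisection via a sextic double plane, and so on).

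For the reverse direction, ``$\D_X \simeq D^b(S)$ implies $X$ rational'', the idea is to realize $X$ itself as a moduli space inside $\D_X$ and then transport rationality across the equivalence. Using the Bayer--Lahoz--Macr\`i--Stellari stability condition $\sigma$ on $\D_X$ (Theorem \ref{BLMS stability}), one would study moduli of $\sigma$-stable objects whose Mukai vectors, under an equivalence $\Phi\colon \D_X \iso D^b(S)$, are carried to Mukai vectors on $S$ whose moduli is a projective bundle, a Hilbert scheme, or another explicitly rational variety. A universal family together with such a parametrization would produce a birational map $X \dashrightarrow \PP^4$.

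The main obstacle is the one that has defeated the subject: neither direction currently comes with a mechanism for passing between categorical data (Fourier--Mukai kernels, semiorthogonal decompositions, stability conditions) and birational data (explicit rational maps to $\PP^4$). The Hodge-theoretic shadow of Addington--Thomas says that the equivalence $\D_X \simeq D^b(S)$ is governed by Hassett's conditions $(*)$ and $(**)$, so a proof of the conjecture would in particular settle whether rationality of a cubic fourfold is controlled by its discriminant, a problem that remains wide open. The techniques of the present paper, centered on stabilizers of stability conditions and symplectic autoequivalences, do not obviously feed into either implication, so the conjecture is recorded here as motivation rather than as something we attack directly.
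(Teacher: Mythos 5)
This statement is Kuznetsov's rationality conjecture, which the paper merely records (citing \cite{Kuz}) as motivation; it supplies no proof, and none is known. You correctly recognize that the statement is open in both directions and decline to claim a proof, which is the right call. Your sketch of what a proof would have to look like --- categorifying a resolution of the birational map $X \dashrightarrow \PP^4$ via Orlov's blowup formula for the forward direction, and extracting rationality from a moduli-theoretic realization of the equivalence $\D_X \simeq D^b(S)$ for the converse --- is an accurate description of the known strategies and of why they currently fall short, and your remark that the Addington--Thomas/Hassett picture reduces the conjecture to the (open) question of whether rationality is governed by the discriminant conditions $(*)$ and $(**)$ is consistent with how the paper uses the conjecture. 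Since there is nothing in the paper to compare against, the only substantive point to make is that your text should not be mistaken for a proof attempt: it is a correct statement that the problem is open, together with a plausible research program.
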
 
The rationality problem of cubic fourfolds is one of long standing problems in algebraic geometry. Conjecturally, very general cubic fourfolds are irrational.  However, there are no known irrational cubic fourfolds so far. Conjecture \ref{rationality} implies that very general cubic fourfolds are irrational. 
Addington and Thomas \cite{AT} proved that Hodge theoretical relations \cite{Has00} and categorical relations \cite{Kuz} are equivalent in some sense (See also \cite{BLMNPS}). 
In the second result of our paper, we will treat Hodge theoretical relations \cite{Has00} and categorical relations \cite{Kuz} simultaneously. For a labeled cubic fourfold $(X,K)$ and an associated K3 surface $(S,h)$ in the sense of \cite{Has00}, we will construct the isomorphism between the labeled automorphism group $\Aut(X,K)$ of the cubic fourfold $X$ and the polarized automorphism group $\Aut(S,h)$ of the K3 surface $S$ via derived categories of $X$ and $S$.
The labeled automorphism group $\Aut(X,K)$ of  a labeled cubic fourfold $(X,K)$ consists of automorphisms of the cubic fourfold $X$ acting on the sublattice $K$ identically. 
Similarly, the polarized automorphism group $\Aut(S,h)$ of a polarized K3 surface $(S,h)$ consists of automorphisms of the K3 surface $S$ fixing on the ample class $h$.
We introduce certain lattices associated to labeled cubic fourfolds and polarized K3 surfaces following \cite{AT}.
For a labeled cubic fourfold $(X,K)$, there is the rank three primitive sublattice $L_K$ of the Mukai lattice $H^*(\D_X,\ZZ)$ of the Kuznetsov component $\D_X$ such that the orthogonal complement $L^\perp_K$ in $H^*(\D_X,\ZZ)$ is Hodge isometric to $K^\perp(-1)$ (Remark \ref{label Mukai}). For a polarized K3 surface $(S,h)$, there is the rank three primitive sublattice $L_h$ of the Mukai lattice $H^*(S,\ZZ)$ of $S$ such that the orthogonal complement $L^\perp_h$ in $H^*(S,\ZZ)$ is Hodge isometric to the primitive cohomology $H^2_{\mathrm{prim}}(S,\ZZ)$ of $S$ (Section 7). The following is the second result of our paper. 
\begin{thm}\label{intromain2} 
Let $d$ be an integer satisfying $(*)$ and $(**)$. For a labeled cubic fourfold $(X,K)$ of discriminant $d$, there is a polarized K3 surface $(S,h)$ of degree $d$ such that the following hold.
\begin{itemize}
\item[(1)]There exists the object $\E \in D^b(S \times X)$ such that the Fourier-Mukai functor $\Phi_\E: D^b(S) \to \D_X$ associated to the Fourier-Mukai kernel $\E$ is an equivalence.
The cohomological Fourier-Mukai transform $\Phi^H_\E: H^*(S,\ZZ) \iso H^*(\D_X,\ZZ)$ induces the isometry 
\[\Phi^H_\E|_{L_h}: L_h \iso L_K \]
and the Hodge isometry
\[\Phi^H_\E|_{L^\perp_h}: L^\perp_h \iso L^\perp_K.\]
\item[(2)]There is a stability condition $\sigma_X \in \Stab^*(S)$ such that the group homomorphism 
\[(-)_\E: \Aut(X) \to \Aut(D^b(S),\sigma_X), f \mapsto f_\E:=\Phi^{-1}_\E \circ f_* \circ \Phi_\E \] 
is an isomorphism of groups.
Moreover, the restriction of $(-)_\E$  induces the isomorphisms 
\[(-)_\E : \Aut(X,K) \iso \Aut(S, h),\]
\[(-)_\E : \Aut_{\mathrm{s}}(X,K) \iso \Aut_{\mathrm{s}}(S, h)\]
of groups.
In particular, for any automorphism $f \in \Aut(X,K)$, there is an unique isomorphism $f_\E \in \Aut(S,h)$ such that the following diagram commutes.
\[\xymatrix{D^b(S) \ar[d]_{f_\E} \ar[r]^{\Phi_\E} & \D_X \ar[d]^{f_*} \\
D^b(S) \ar[r]^{\Phi_\E} & \D_X }  \]
\end{itemize}
\end{thm}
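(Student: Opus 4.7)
The plan is to combine the Hassett correspondence with the derived/categorical upgrade of Addington--Thomas and then transport the entire picture from $\D_X$ to $D^b(S)$ across a Fourier--Mukai equivalence, where Theorem \ref{intromain1} can be applied. First I would invoke Hassett's theorem to produce, from the labeled cubic fourfold $(X,K)$ of discriminant $d$ satisfying $(*)$ and $(**)$, a polarized K3 surface $(S,h)$ of degree $d$ together with a Hodge isometry $K^\perp(-1) \simeq H^2_{\mathrm{prim}}(S,\ZZ)$. Then by the Addington--Thomas criterion (and its refinement to Fourier--Mukai kernels in \cite{BLMNPS}), the presence of such a Hodge isometry upgrades to the existence of a kernel $\E \in D^b(S \times X)$ such that $\Phi_\E: D^b(S) \to \D_X$ is an equivalence and $\Phi^H_\E: H^*(S,\ZZ) \iso H^*(\D_X,\ZZ)$ is a Hodge isometry of Mukai lattices.

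Next, to obtain the refinement in part (1) of the theorem, I would compare the intrinsic descriptions of $L_h$ and $L_K$. Both are rank three primitive sublattices distinguished by algebraic Mukai vectors attached respectively to the pair $(\OO_S,h)$ and to the distinguished classes in $H^*(\D_X,\ZZ)$ determined by $K$ (as in Remark \ref{label Mukai} and Section~7). Since the Addington--Thomas Hodge isometry extends the Hassett Hodge isometry $H^2_{\mathrm{prim}}(S,\ZZ) \simeq K^\perp(-1)$, an orthogonality argument forces $\Phi^H_\E(L_h) = L_K$, which simultaneously yields the Hodge isometry $\Phi^H_\E|_{L^\perp_h}: L^\perp_h \iso L^\perp_K$.

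For part (2), I would pull back the BLMS stability condition along the equivalence by setting $\sigma_X := \Phi^{-1}_\E \cdot \sigma$. The delicate point is to verify that $\sigma_X$ lies in the distinguished component $\Stab^*(S)$; this should follow from the fact that $\Phi^H_\E$ is an orientation-preserving Hodge isometry of the Mukai lattices and that any Bridgeland stability condition on $D^b(S)$ whose central charge comes from an algebraic class in the expected chamber lies in $\Stab^*(S)$ by \cite{Bri08}. Once this is established, conjugation by $\Phi_\E$ gives an isomorphism $\Aut(\D_X,\sigma) \iso \Aut(D^b(S),\sigma_X)$ of Fourier--Mukai type autoequivalence groups, and composing with the isomorphism $\Aut(X) \iso \Aut(\D_X,\sigma)$ supplied by Theorem \ref{intromain1} produces the main isomorphism $(-)_\E: \Aut(X) \iso \Aut(D^b(S),\sigma_X)$. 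The commuting square in the last displayed diagram is then tautological from the definition $f_\E := \Phi^{-1}_\E \circ f_* \circ \Phi_\E$.

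Finally, to identify the labeled and symplectic subgroups, I would argue that an automorphism $f \in \Aut(X)$ lies in $\Aut(X,K)$ if and only if $f_*$ acts trivially on $L_K$, which by the lattice identification $\Phi^H_\E|_{L_h}: L_h \iso L_K$ of part (1) is equivalent to $f_\E$ acting trivially on $L_h$, and hence to $f_\E \in \Aut(S,h)$; for the symplectic variant, the Hodge isometry property of $\Phi^H_\E$ makes the top Hodge summands correspond, so $f$ is symplectic precisely when $f_\E$ is. The main expected obstacle is step~three: checking carefully that the transported stability condition $\sigma_X$ genuinely belongs to $\Stab^*(S)$, since only with this can Huybrechts' framework and Theorem \ref{intromain1} be combined. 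The remaining bookkeeping on sublattice actions is then a routine consequence of the primitivity of the embeddings $L_h \hookrightarrow H^*(S,\ZZ)$ and $L_K \hookrightarrow H^*(\D_X,\ZZ)$.
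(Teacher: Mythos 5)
Your overall architecture (Hassett $\Rightarrow$ Hodge isometry $\Rightarrow$ equivalence $\Rightarrow$ conjugate Theorem \ref{intromain1} across it) matches the paper's in outline, but the two steps you flag as delicate are precisely where your proposal has genuine gaps, and they are the actual content of the paper's Section~7. First, the Addington--Thomas/BLMNPS existence theorem only produces \emph{some} K3 surface and \emph{some} equivalence $\D_X \simeq D^b(S')$; it gives no control over which Hodge isometry of Mukai lattices is realized, so there is no reason the induced isometry should extend the Hassett isometry $K^\perp(-1)\simeq H^2_{\mathrm{prim}}(S,\ZZ)$, and your ``orthogonality argument'' for $\Phi^H_\E(L_h)=L_K$ has nothing to act on. (If the restriction to $L_h^\perp$ were known to agree with the Hassett isometry, then primitivity would indeed force $L_h\mapsto L_K$; the problem is producing an equivalence with that property.) The paper does this by \emph{constructing} the K3 surface as the fine moduli space $M_\sigma(v)$ for $v=\tilde{\varphi}^{-1}(0,0,1)$, where $\tilde{\varphi}$ is a Nikulin extension of the Hassett isometry and $\sigma$ is a carefully chosen $v$-generic stability condition (Lemma \ref{generic}, Proposition \ref{good heart}), and then normalizing the universal family $\U$ by a line-bundle twist so that $\Phi^H_\U$ sends $(1,0,0)$ and $(0,\omega,0)$ to the prescribed classes (Proposition \ref{isomorphism}); only then do the Torelli theorem for K3 surfaces and Remark \ref{sublattice L} deliver $t:M_\sigma(v)\iso S$ with $t^*h=\omega$ and the isometries $L_h\iso L_K$, $L^\perp_h\iso L^\perp_K$.

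Second, your criterion for membership in $\Stab^*(S)$ --- that any stability condition whose central charge lies in the expected chamber of $\P^+_0(S)$ belongs to $\Stab^*(S)$ --- is not a theorem of \cite{Bri08}: $\pi_S:\Stab^*(S)\to\P^+_0(S)$ is a covering with nontrivial deck group, and the uniqueness of the component over $\P^+_0(S)$ is open, so the location of the central charge does not determine the component. The paper's Proposition \ref{dist connected} instead shows that the transported $v$-generic stability condition is \emph{literally} a geometric one, $\Phi^*_\U\sigma=\sigma_{0,n\omega}$, by checking that all skyscraper sheaves $\OO_{[E]}$ go to $\sigma$-stable objects and invoking Bridgeland's Proposition 10.3; connectedness then carries the whole component across. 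This again requires the moduli-theoretic description of $S$, which your route discards. A smaller but real omission: identifying $\Aut(X,K)$ with $\Aut(S,h)$ requires showing that the autoequivalence $f_\E$ is induced by an honest automorphism of $S$ (not merely that it fixes $L_h$ cohomologically); the paper gets this from the induced automorphism $f_{\sigma,v}$ of the moduli space, a line-bundle argument forcing $L=0$, and the Torelli theorems in both directions (for the surjectivity onto $\Aut(S,h)$ one needs the Torelli theorem for cubic fourfolds as well). As written, your proposal defers all of the substance to citations that do not carry it.
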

The key of the proof of Theorem \ref{intromain2} is the construction of a polarized K3 surface $(S,h)$ in terms of moduli spaces of stable objects in the Kuznetsov component $\D_X$ as \cite{BLMNPS}. The stability condition $\sigma_X$ in Theorem \ref{intromain2} is induced by the stability condition on $\D_X$ constructed in \cite{BLMS}. The first group isomorphism in  Theorem \ref{intromain2} (2) is deduced from Theorem \ref{intromain1}. Labeled automorphisms of a labeled cubic fourfold $(X,K)$ induce autoequivalences of the Kuznetsov component $\D_X$. Such autoequivalences induce automorphisms of the moduli space $S$ of stable objects in $\D_X$ and fix the polarization. 

The third result in our paper is about the relation between the existence of non-trivial symplectic automorphisms of cubic fourfolds and the existence of associated K3 surfaces (cf. \cite{Laza}, \cite{LZ}).

\begin{thm}[cf. Proposition 2.5, Cororally 2.9 in \cite{Laza}]\label{intromain3}
Let $X$ be a cubic fourfold. If  the symplectic automorphism group $\Aut_{\mathrm{s}}(X)$ of $X$ is not isomorphic to the trivial group $1$ or the cyclic group $\ZZ/2\ZZ$ of order $2$, there is a K3 surface $S$ such that $\D_X \simeq D^b(S)$. 
\end{thm}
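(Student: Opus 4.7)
The plan is to combine Laza--Zheng's lattice-theoretic classification of symplectic automorphism groups of cubic fourfolds with the Hodge-theoretic characterization of cubic fourfolds admitting an associated K3 surface (and its categorical upgrade already recorded as Theorem \ref{intromain2}).

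Suppose $G = \Aut_{\mathrm{s}}(X)$ is neither the trivial group nor $\ZZ/2\ZZ$, so $|G| \geq 3$. By Theorem \ref{intromain1}, $G$ acts on the Kuznetsov component $\D_X$ by symplectic autoequivalences; in particular the induced $G$-action on the Mukai lattice $H^*(\D_X,\ZZ)$ is by Hodge isometries fixing $H^{2,0}(\D_X)$ pointwise. A Nikulin-type argument, valid on any K3-type lattice, then shows that $G$ acts trivially on the transcendental part, so the coinvariant lattice is negative-definite and lies entirely inside the algebraic part of $H^*(\D_X,\ZZ)$.

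The heart of the matter is Laza--Zheng's classification in [Laza, Proposition 2.5, Corollary 2.9], where the invariant and coinvariant sublattices of $H^4(X,\ZZ)$ are computed case-by-case for each finite group that can occur as $\Aut_{\mathrm{s}}(X)$. From their tables one extracts that whenever $|G| \geq 3$, the algebraic part of $H^4(X,\ZZ)$ contains a primitive rank-$2$ sublattice $K \ni H^2$ whose discriminant $d$ satisfies Hassett's arithmetic conditions $(*)$ and $(**)$. The two excluded groups, $1$ and $\ZZ/2\ZZ$, correspond exactly to coinvariant lattices that are either zero or of the wrong rank/divisibility type to force both Hassett conditions. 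Once such a labeled cubic fourfold $(X,K)$ of admissible discriminant $d$ is in hand, Theorem \ref{intromain2}(1) produces a polarized K3 surface $(S,h)$ of degree $d$ and a Fourier--Mukai equivalence $\Phi_{\E} \colon D^b(S) \iso \D_X$, which is the desired conclusion.

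The main obstacle is the bookkeeping in the middle step: one must match Laza--Zheng's numerical invariants (orders, signatures and discriminant forms of the $G$-(co)invariants in $H^4(X,\ZZ)$) against Hassett's admissible set, and verify the sharpness of the threshold at $|G|=3$. All of this lattice analysis is carried out in [Laza]; once imported, the remainder of the proof is a direct application of the author's Theorem \ref{intromain2}, so no new categorical input is needed beyond what has already been established in the present paper.
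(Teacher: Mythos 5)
Your opening and closing moves are sound, and your first paragraph (symplectic autoequivalences act trivially on the transcendental part, so the coinvariant lattice is negative definite and sits inside the algebraic part of the Mukai lattice) matches the paper's Lemmas 8.1--8.2. The problem is the middle step, which is a genuine gap rather than bookkeeping. You assert that ``from their tables one extracts that whenever $|G|\geq 3$ the algebraic part of $H^4(X,\ZZ)$ contains a primitive rank-two sublattice $K\ni H^2$ whose discriminant satisfies $(*)$ and $(**)$,'' and defer this to a case-by-case match against Hassett's admissible set said to be ``carried out in [Laza].'' That is precisely the hard content of the theorem, and it is not a matter of reading orders, signatures and discriminant forms off a table: for each of the several dozen coinvariant lattices in the Laza--Zheng classification one would have to exhibit an explicit class in $H^{2,2}_{\mathrm{prim}}(X,\ZZ)$ of the right square and divisibility, and nothing in your write-up explains why the threshold sits exactly at $|G|=3$ (for $G=\ZZ/2\ZZ$ the coinvariant lattice is nonzero, so ``zero or wrong rank/divisibility type'' is not a proof). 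As written, the crucial implication is asserted, not established.

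The paper's argument sidesteps all of this. From \cite{LZ} (Theorem 1.2) it uses only the single numerical consequence $\mathrm{rk}\,S_G(X)\geq 12$ when $G\not\simeq 1,\ZZ/2\ZZ$, hence $\rho(\D_X)\geq 12$ and $\mathrm{rk}\,T_{\D_X}\leq 10$. Morrison's result (Corollary 2.10 in \cite{Mor}) then produces a K3 surface $S$ with $T_S\simeq T_{\D_X}$; comparing the two primitive embeddings of $T_{\D_X}$, into $H^*(S,\ZZ)$ and into $H^*(\D_X,\ZZ)$, via Nikulin's uniqueness yields a Hodge isometry $H^*(S,\ZZ)\simeq H^*(\D_X,\ZZ)$ carrying $U\simeq H^0(S,\ZZ)\oplus H^4(S,\ZZ)$ into $\widetilde{H}^{1,1}(\D_X,\ZZ)$, and Theorem \ref{derivedTorelli} then gives $\D_X\simeq D^b(S)$ directly. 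No admissible discriminant, no labelled cubic, and no appeal to Theorem \ref{intromain2} is needed. If you want to rescue your route, the missing ingredient is the general statement that $\rho(\D_X)\geq 12$ forces $X\in\C_d$ for some $d$ satisfying $(*)$ and $(**)$ --- but proving that is again a Nikulin/Morrison-style lattice embedding argument, i.e.\ essentially the paper's proof, not a table check.
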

The proof relies on the classification of symplectic automorphism groups of cubic fourfolds in \cite{LZ} and the lattice theoretic technique in  \cite{Nik} and \cite{Mor}. Using Theorem \ref{intromain2}, we can find examples of finite symplectic autoequivalences of K3 surfaces, which are not conjugate to symplectic automorphisms of K3 surfaces. 
For example, there is the K3 surface and the symplectic autoequivalence of order $9$  from the symplectic automorphism of order $9$ on the Fermat cubic fourfold (Example \ref{Fermat}).  
From the Klein cubic fourfold, we can construct the K3 surface and the symplectic autoequivalence of order $11$ (Example \ref{Klein}).
Typical examples of autoequivalences of derived categories of K3 surfaces are shifts, tensoring line bundles, automorphisms and spherical twists. Since shifts, tensoring line bundles and spherical twists have infinite order, it is not easy to give non-trivial examples of finite symplectic autoequivalences in terms of K3 surfaces. We can construct other examples using Theorem 1.2 and Theorem 1.8 in \cite{LZ}.

\subsection{Notation} 

We work over the complex number field $\mathbb{C}$. For a smooth projective variety $X$, we denote the bounded derived category of coherent sheaves on $X$ by $D^b(X)$. For a smooth projective variety $Z$ and an object $E \in D^b(Z)$, we define the Mukai vector $v_Z(E)$ of $E$ by 
\[ v_Z(E):=\mathrm{ch}(E) \cdot \sqrt{\mathrm{td}(Z)}.\] 
For objects $E, F \in D^b(X)$, we denote $E \otimes F:=E \otimes^\mathbf{L}F$ for simplicity.  For smooth projective varieties $X$ and $Y$ and an object $\E \in D^b(X \times Y)$, the Fourier-Mukai functor $\Phi_\E: D^b(X) \to D^b(Y)$ is defined by 
\[\Phi_\E(E):=\mathbf{R}p_*(q^*E \otimes \E),\]
where  $p:X \times Y \to Y$ and $q:X \times Y \to X$ are projections and $E \in D^b(X)$. Then the cohomological Fourier-Mukai transform $\Phi^H_\E: H^*(X,\QQ) \to H^*(Y,\QQ)$ is the linear map defined by 
\[ \Phi^H_\E(\alpha):=p_*(q^*\alpha \cdot v_{X \times Y}(\E)).\]
If $X$ and $Y$ are K3 surfaces, the the cohomological Fourier-Mukai transform  induces $\Phi^H_\E: H^*(X,\ZZ) \iso H^*(Y,\ZZ)$.
For a triangulated category $\D$ and an exceptional object $E \in \D$, the left mutation functor $\mathbf{L}_E: \D \to E^\perp$ and the $\mathbf{R}_E: \D \to {}^\perp E$ with respect to $E$ fit into the exact triangles 
\[\mathbf{R}\mathrm{Hom}(E, F) \otimes E \to F \to \mathbf{L}_E(F), \mathbf{R}_E(F) \to F \to \mathbf{R}\mathrm{Hom}(F,E)^*\otimes E \]
for $F \in \D$. 
We assume that cubic fourfolds are smooth.
We assume that K3 surfaces are projective. 
For a cubic fourfold $X$, denote the hyperplane class of $X$ by $H$. The middle primitive cohomology $H^4_{\mathrm{prim}}(X,\ZZ)$ of $X$ is the orthogonal complement $\langle H^2 \rangle^\perp$ of $H^2$ in $H^4(X,\ZZ)$.
The hyperbolic plane lattice $U$ is a lattice determined by the Gramian matrix 
\[\begin{pmatrix}
0 & 1 \\
1 & 0 \\
\end{pmatrix}.\]
The root lattice $E_8$ is positive definite.

\subsection*{Acknowledgements}
The author would like to thanks Lie Fu, Wahei Hara, Seung-Jo Jung and Naoki Koseki for valuable conversation.
The part of this work was done while the author was staying at Max Planck Institute for Mathematics in Bonn. The author is grateful to Max Planck Institute for Mathematics for its hospitality and financial support. This work is also supported by Interdisciplinary Theoretical and Mathematical Sciences Program (iTHEMS) in RIKEN and JSPS KAKENHI Grant number 19K14520. 

\section{Cubic fourfolds and K3 surfaces}
In this section, we review relations between cubic fourfolds and K3 surfaces via derived categories and Hodge theory.

\subsection{Mukai lattice of K3 surfaces}
In this subsection, we recall Mukai lattices of K3 surfaces. Mukai lattices are important in the study of derived categories of K3 surfaces.

Let $S$ be a K3 surface.   The cohomology group $H^*(S,\ZZ)$ of $S$ has a structure of a lattice given by
 \[((r_1, c_1, m_1), (r_2, c_2, m_2)):=c_1c_2-r_1m_2-r_2m_1.\] 
 Here, note that $H^*(S,\ZZ)=H^0(S,\ZZ) \oplus H^2(S,\ZZ) \oplus H^4(S,\ZZ)$.
 The lattice $(H^*(S,\ZZ),(-,-))$ is called the Mukai lattice of $S$ and $(-,-)$ is called the Mukai pairing on $S$.
 The Mukai lattice $(H^*(S,\ZZ),(-,-))$ is isometric to the even unimodular lattice $U^{\oplus4}\oplus E_8(-1)^{\oplus2}$ of signature $(4,20)$. 
 Moreover, the Mukai lattice $H^*(S,\ZZ)$ of $S$ has a weight two Hodge structure $\widetilde{H}(S)$ given by
 
\[\widetilde{H}^{2,0}(S):=H^{2,0}(S),\]
\[\widetilde{H}^{1,1}(S):=\bigoplus_{p=0}^{2}H^{p,p}(S),\]
\[\widetilde{H}^{0,2}(S):=H^{0,2}(S).\]
The integral part $\widetilde{H}^{1,1}(S,\ZZ):=\widetilde{H}^{1,1}(S) \cap H^*(S,\ZZ)$ is called the algebraic Mukai lattice of $S$. 
Note that $\widetilde{H}^{1,1}(S,\ZZ)=H^0(S,\ZZ) \oplus \mathrm{NS}(S) \oplus H^4(S,\ZZ)$. So we have $\mathrm{rk}\widetilde{H}^{1,1}(S,\ZZ)=\rho(S)+2$ and $T_S=\widetilde{H}^{1,1}(S,\ZZ)^\perp$, where $T_S$ is the transcendental lattice of $S$. For an object $E \in D^b(S)$, the Mukai vector $v(E)$ of $E$ is defined by $v(E):=\mathrm{ch}(E)\sqrt{\mathrm{td}(S)} \in \widetilde{H}^{1,1}(S,\ZZ)$. Then the homomorphism $v:K_0(S) \to \widetilde{H}^{1,1}(S,\ZZ)$ is surjective.

\subsection{Mukai lattices for cubic fourfolds}
 In this subsection, we introduce Mukai lattices for cubic fourfolds following Addington and Thomas \cite{AT}.
 
Let $X$ be a cubic fourfold. The derived category $D^b(X)$ of $X$ has a semi-orthogonal decomposition
\[D^b(X)=\langle \D_X, \OO_X, \OO_X(1), \OO_X(2) \rangle.\]
The admissible subcategory $\D_X$ is called the Kuznetsov component of $X$. Kuznetsov \cite{Kuz} proved that $\D_X$ is a $2$-dimensional Calabi-Yau category, that is the Serre functor of $\D_X$ is isomorphic to the double shift functor $[2]$.
We denote the topological K-group of $X$ by $K_{\mathrm{top}}(X)$. For an element $\alpha \in K_{\mathrm{top}}(X)$, we define the Mukai vector  $v_X(\alpha)$ of $\alpha$ as $v_X(\alpha):=\mathrm{ch}(\alpha)\sqrt{\mathrm{td}(X)} \in H^*(X,\QQ)$.
For elements $\alpha, \beta \in K_{\mathrm{top}}(X)$, we have the topological Euler characteristic $\chi_{\mathrm{top}}(\alpha, \beta) \in \ZZ$. Addington and Thomas introduced the Mukai lattice of $\D_X$.

\begin{dfn}[\cite{AT}, Definition 2.2]\label{Mukai lattice}
We define the cohomology group of $\D_X$ as 
\[H^*(\D_X, \ZZ):=\{\alpha \in K_{\mathrm{top}}(X) \mid \chi_{\mathrm{top}}([\OO_X(k)], \alpha)=0,  \text{for} \ k=0,1, 2\}. \]
Let $(-,-)$ be the restriction of $-\chi_{\mathrm{top}}(-,-)$ to $H^*(\D_X,\ZZ)$.
The pair $(H^*(\D_X,\ZZ), (-,-))$ is a lattice isometric to the even unimodular lattice $U^{\oplus4}\oplus E_8(-1)^{\oplus2}$ of signature $(4,20)$.  The lattice $(H^*(\D_X,\ZZ), (-,-))$ is called the Mukai lattice of $\D_X$. Moreover, $H^*(\D_X,\ZZ)$ has a weight two Hodge structure $\widetilde{H}(\D_X)$ given by
\[\widetilde{H}^{2,0}(\D_X):=v^{-1}_X\bigl( H^{3,1}(X)\bigr),\]
\[\widetilde{H}^{1,1}(\D_X):=v^{-1}_X\Bigl(\bigoplus_{p=0}^{4}H^{p,p}(X)\Bigr),\]
\[\widetilde{H}^{0,2}(\D_X):=v^{-1}_X\bigl( H^{1,3}(X)\bigr).\]
The integral part $\widetilde{H}^{1,1}(\D_X,\ZZ):=\widetilde{H}^{1,1}(\D_X) \cap H^*(\D_X,\ZZ)$ is called the algebraic Mukai lattice of $\D_X$. The transcendental lattice $T_{\D_X}$ of $\D_X$ is defined by the orthogonal complement $T_{\D_X}:=\widetilde{H}^{1,1}(\D_X,\ZZ)^\perp$ in $H^*(\D_X,\ZZ)$.
\end{dfn}
We consider Mukai vectors for objects in the Kuznetsov component.

\begin{rem}\label{v}

We denote the natural map $K_0(\D_X) \to H^*(\D_X,\ZZ)$ by $v$. By Proposition 2.4 in \cite{AT}, the image of $v$ is equal to the algebraic Mukai lattice $\widetilde{H}^{1,1}(\D_X,\ZZ)$. For an object $E \in \D_X$, we define the Mukai vector $v(E)$ of $E$ as 
$v(E):=v([E])$. The map $v: K_0(\D_X) \to \widetilde{H}^{1,1}(\D_X,\ZZ)$ is different from the map $v_X: K_{\mathrm{top}}(X) \to H^*(X,\QQ)$.
\end{rem}

The algebraic Mukai lattice $\widetilde{H}^{1,1}(\D_X,\ZZ)$  of $\D_X$ always contains the certain primitive sublattice of rank two. We see the definition of this sublattice.
Let $i: \D_X \to D^b(X)$ be the inclusion functor and $i^*:D^b(X) \to \D_X$ the left adjoint functor of $i$. For an integer $k \in \ZZ$, we define the element by \[\lambda_k:=[i^*\OO_{\mathrm{line}}(k)] \in \widetilde{H}^{1,1}(\D_X,\ZZ).\]
The Gramian matrix of the sublattice $\langle\lambda_1, \lambda_2 \rangle$ of $\widetilde{H}^{1,1}(\D_X,\ZZ)$ is
\[ \left( \begin{array}{cc}
2 & -1 \\
-1 & 2 
\end{array} \right),\]
Denote the sublattice $\langle\lambda_1, \lambda_2 \rangle$ of  $\widetilde{H}^{1,1}(\D_X,\ZZ)$ by $A_2$. 
Since the algebraic Mukai lattice $\widetilde{H}^{1,1}(\D_X,\ZZ)$  of $\D_X$ contains $A_2$, we have $\mathrm{rk}\widetilde{H}^{1,1}(\D_X,\ZZ)\geq 2$.
The sublattice $A_2$ is related to the primitive cohomology lattice $H^4_{\mathrm{prim}}(X,\ZZ)$ of $X$. 

\begin{prop}[\cite{AT}, Proposition 2.3]\label{prim}
Let $A_2^{\perp}$ be the orthogonal complement of $A_2$ in $H^*(\D_X,\ZZ)$. Then we have the Hodge isometry $v_X: A_2^{\perp} \iso H^4_{\mathrm{prim}}(X,\ZZ)(-1)$.
\end{prop}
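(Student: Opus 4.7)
The plan is to work rationally via the Mukai vector map $v_X \colon K_{\mathrm{top}}(X) \otimes \QQ \iso H^*(X, \QQ)$ and then recover integrality at the end. Since $H^*(\D_X, \ZZ)$ is by definition the $\chi_{\mathrm{top}}$-orthogonal in $K_{\mathrm{top}}(X)$ of $[\OO_X], [\OO_X(1)], [\OO_X(2)]$, and $A_2 = \langle \lambda_1, \lambda_2 \rangle$ with $\lambda_k = [i^*\OO_{\mathrm{line}}(k)]$, the lattice $A_2^\perp \subset H^*(\D_X, \ZZ)$ is the $\chi_{\mathrm{top}}$-orthogonal in $K_{\mathrm{top}}(X)$ of the five classes $[\OO_X(k)]$ for $k=0,1,2$ and $\lambda_k$ for $k=1,2$. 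The central observation I aim for is that the $\QQ$-span $V \subset H^*(X, \QQ)$ of the Mukai vectors of these five classes is exactly the subalgebra generated by the hyperplane class $H$, namely $V = \langle 1, H, H^2, H^3, H^4 \rangle_\QQ$.

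First I would compute $v_X([\OO_X(k)]) = e^{kH}\sqrt{\mathrm{td}(X)}$ for $k=0,1,2$; these three classes contribute non-degenerately in degrees $0,2,4$ and fix arbitrary combinations in degrees $6,8$. Next I would write $\lambda_k = [i^*\OO_\ell(k)]$ using $i^*$ as an iterated left mutation through $\OO_X, \OO_X(1), \OO_X(2)$, so that $v_X(\lambda_k)$ differs from $v_X(\OO_\ell(k))$ only by classes already in $V$. Because $\OO_\ell(k)$ has support of codimension three, $v_X(\OO_\ell(k))$ lives in $H^6 \oplus H^8 = \QQ H^3 \oplus \QQ H^4$ with leading term a nonzero rational multiple of $H^3$. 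Combined with the three $v_X([\OO_X(k)])$, this shows $V$ is the full $H$-subalgebra.

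Next I would translate orthogonality into cohomology. By Hirzebruch--Riemann--Roch one has $\chi_{\mathrm{top}}(\alpha,\beta) = \int_X \ch(\alpha)^\vee \ch(\beta)\,\mathrm{td}(X)$, which rewrites as an intersection pairing between $v_X(\alpha)$ and $v_X(\beta)$ up to the standard duality involution. Transporting the pairing $-\chi_{\mathrm{top}}$ on $H^*(\D_X,\ZZ)$ via $v_X$ therefore recovers minus the Poincar\'e pairing on $v_X(H^*(\D_X,\ZZ)) \subset H^*(X, \QQ)$. Consequently $v_X(A_2^\perp \otimes \QQ)$ is the Poincar\'e-orthogonal of $V$, which is precisely $H^4_{\mathrm{prim}}(X, \QQ)$, and the sign provides the Tate twist $(-1)$. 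The Hodge-compatibility is automatic because $\widetilde{H}(\D_X)$ was defined in Definition \ref{Mukai lattice} as the pull-back under $v_X$ of the Hodge filtration on $H^*(X,\CC)$, and the primitive cohomology inherits its Hodge structure from $H^4(X,\ZZ)$.

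The remaining and hardest step is integrality and surjectivity: the rational isometry must restrict to an isomorphism of integral lattices. Rank-discriminant bookkeeping organizes the argument: $H^*(\D_X, \ZZ)$ is unimodular of signature $(4,20)$ and $A_2$ has discriminant $3$, so $A_2^\perp$ has rank $22$, signature $(2,20)$ and discriminant $3$; similarly $H^4(X,\ZZ)$ is unimodular and $(H^2)^2 = 3$, so $H^4_{\mathrm{prim}}(X, \ZZ)(-1)$ has the same invariants. I would then show that $v_X$ sends integral $K$-theory classes to integral cohomology classes — using torsion-freeness of $K_{\mathrm{top}}(X)$ for the cubic fourfold and the explicit form of $\sqrt{\mathrm{td}(X)}$ — and upgrade the resulting inclusion of full-rank sublattices to equality using the matching discriminants. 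I expect the main obstacle to be precisely this last integrality/primitivity verification, since the rational isometry is easy but ruling out a hidden finite-index contribution requires exhibiting enough explicit integral preimages (for instance, differences of structure sheaves of lines $[\OO_\ell] - [\OO_{\ell'}]$ projected into $\D_X$) to generate $H^4_{\mathrm{prim}}(X,\ZZ)(-1)$.
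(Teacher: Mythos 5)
This proposition is imported verbatim from \cite{AT} (Proposition 2.3) and the paper offers no proof of its own, so the only benchmark is Addington--Thomas's argument --- which is essentially what you have reconstructed: the Mukai vectors of $[\OO_X],[\OO_X(1)],[\OO_X(2)],\lambda_1,\lambda_2$ span $\bigoplus_j\QQ H^j$, the Euler pairing transports under $v_X$ to the (sign-reversed) Poincar\'e pairing so that $A_2^\perp\otimes\QQ$ maps onto $H^4_{\mathrm{prim}}(X,\QQ)$, Hodge compatibility is built into Definition \ref{Mukai lattice}, and the full-rank isometric embedding of integral lattices is an isomorphism because both sides have discriminant of absolute value $3$ (this bookkeeping already suffices; the explicit preimages you worry about at the end are not needed). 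The one sentence to tighten is the claim that ``$v_X$ sends integral K-theory classes to integral cohomology classes'': taken literally this is false, since $v_X([\OO_X])=\sqrt{\mathrm{td}(X)}$ is not integral; what is true, and what your appeal to the shape of $\sqrt{\mathrm{td}(X)}$ is implicitly using, is that any $\alpha\in A_2^\perp$ has $\ch_0(\alpha)=\ch_1(\alpha)=0$, so the only surviving component is $v_X(\alpha)=\ch_2(\alpha)=-c_2(\alpha)\in H^4(X,\ZZ)$, which is integral because Chern classes of topological K-theory classes are.
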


\subsection{Picard numbers of Kuznetsov components and K3 surfaces}
In this subsection, we introduce Picard numbers of Kuznetsov components.

 Let $X$ be a cubic fourfold.
The following is the definition of the Picard number of the Kuznetsov component $\D_X$.

\begin{dfn}
The Picard number $\rho(\D_X)$ of the Kuznetsov component $\D_X$ of $X$ is
\[ \rho(\D_X):=\mathrm{rk}\widetilde{H}^{1,1}(\D_X,\ZZ)-2. \]
\end{dfn}
Since $\mathrm{rk}\widetilde{H}^{1,1}(\D_X,\ZZ)\geq 2$, we have $\rho(\D_X)\geq 0$.
We can describe the Picard number of $\D_X$ in terms of Hodge structure on $H^4(X,\ZZ)$.

\begin{rem}\label{2,2}
The equality $\rho(\D_X)=\mathrm{rk}H^{2,2}(X,\ZZ)-1$ holds.
\end{rem}
\begin{proof}
It is deduced from Definition \ref{Mukai lattice} and Proposition \ref{prim}.
\end{proof}

Picard numbers of Kuznetsov components are compatible with Picard numbers of K3 surfaces.

\begin{rem}\label{Picard number}
If there is a K3 surface $S$ such that $\D_X \simeq D^b(S)$, we have a Hodge isometry 
${H}^*(\D_X,\ZZ) \simeq {H}^*(S,\ZZ)$. So we obtain $\rho(\D_X)=\rho(S)\geq 1$ and $T_{\D_X} \simeq T_S$. 
\end{rem}

For a cubic fourfold $X$, there is a necessary and sufficient condition of the existence of a K3 surface $S$ such that $\D_X \simeq D^b(S)$ in terms of lattice theory.

\begin{thm}[\cite{AT}, \cite{BLMNPS}]\label{derivedTorelli}
There is an embedding of the hyperbolic lattice $U$ into the algebraic Mukai lattice $\widetilde{H}^{1,1}(\D_X,\ZZ)$ of $\D_X$ if and only if there is a K3 surface such that $\D_X \simeq D^b(S)$. 
\end{thm}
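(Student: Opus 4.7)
The plan is to split the argument into two implications, with the forward direction ($\Leftarrow$) being essentially a tautology and the reverse direction ($\Rightarrow$) being the substantial content.

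For $\Leftarrow$, suppose $\D_X \simeq D^b(S)$ for a K3 surface $S$. The equivalence induces a Hodge isometry $H^*(S,\ZZ) \iso H^*(\D_X,\ZZ)$ that restricts to an isometry of the algebraic parts (as already recorded in Remark \ref{Picard number}). The algebraic Mukai lattice of $S$ contains the obvious hyperbolic sublattice $U = H^0(S,\ZZ) \oplus H^4(S,\ZZ)$ spanned by the Mukai vectors $(1,0,0)$ and $(0,0,1)$, so $\widetilde{H}^{1,1}(\D_X,\ZZ)$ must also contain a copy of $U$.

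For $\Rightarrow$, fix an embedding $U \hookrightarrow \widetilde{H}^{1,1}(\D_X,\ZZ)$, which is automatically primitive since $U$ is unimodular. Pick a primitive isotropic vector $v \in U$. First I would choose a $v$-generic Bridgeland stability condition $\sigma$ on $\D_X$ from the family constructed by Bayer, Lahoz, Macrì and Stellari (Theorem \ref{BLMS stability}), and form the moduli space $M_\sigma(\D_X, v)$ of $\sigma$-stable objects in $\D_X$ with Mukai vector $v$. The central geometric step is to show that $M := M_\sigma(\D_X, v)$ is a K3 surface. Since $\D_X$ is a $2$-Calabi-Yau category, Serre duality together with $\sigma$-stability gives $\mathrm{ext}^0(E,E) = \mathrm{ext}^2(E,E) = 1$ and $\mathrm{ext}^1(E,E) = v^2 + 2 = 2$ at any $\sigma$-stable $E$, so $M$ is smooth of dimension two with tangent spaces carrying a nondegenerate symplectic form. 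Non-emptiness, projectivity, and the remaining K3 conditions (trivial canonical bundle and $H^1(M,\OO_M) = 0$) would be imported from the moduli machinery developed in \cite{BLMNPS}, which transports Mukai's classical picture for K3 surfaces to the Kuznetsov setting.

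The final step is to build an equivalence from a (possibly twisted) universal family $\E \in D^b(M \times X,\alpha^{-1} \boxtimes 1)$. The induced Fourier--Mukai functor $\Phi_\E \colon D^b(M,\alpha) \to \D_X$ is fully faithful by a Bondal--Orlov type criterion (skyscrapers at distinct points map to $\sigma$-stable objects with the correct Ext pattern), and the 2-Calabi--Yau property on both sides upgrades full faithfulness to an equivalence, since the image is Serre-invariant and admissible in a category with the same numerical invariants. The main obstacle is showing that the Brauer class $\alpha \in \mathrm{Br}(M)$ vanishes, so that we genuinely obtain $D^b(M) \simeq \D_X$; this is precisely where the existence of the \emph{full} hyperbolic sublattice $U$ (rather than a lone isotropic vector) is essential, as one uses the second isotropic generator of $U$ to produce an auxiliary class on $M \times X$ trivializing $\alpha$. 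The K3 surface $S := M$ then satisfies $\D_X \simeq D^b(S)$, completing the reverse implication.
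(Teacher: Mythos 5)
The paper does not prove this statement; it is quoted verbatim from \cite{AT} and \cite{BLMNPS}, so there is no internal proof to compare against. That said, your outline is an accurate reconstruction of the argument in the cited sources. The backward implication is exactly as you say (and is the remark the paper makes immediately after the theorem: $U$ corresponds to $H^0(S,\ZZ)\oplus H^4(S,\ZZ)$); the forward implication via a primitive isotropic $v\in U$, a $v$-generic stability condition in $\Stab^*(\D_X)$, and the two-dimensional moduli space $M_\sigma(v)$ is precisely the strategy of \cite{BLMNPS}, and indeed the paper itself re-runs this construction in its Section 7 in the labeled setting (choice of $v$, genericity via Lemma \ref{generic}, fineness from an isotropic $v'$ with $(v,v')=-1$, and the equivalence $\Phi_\U\colon D^b(M_\sigma(v))\iso\D_X$). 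Your observation that the full hyperbolic plane, rather than a lone isotropic vector, is what kills the Brauer class and makes the moduli space fine is the correct and essential point.

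Two caveats. First, the substantive content --- existence of stability conditions on $\D_X$ satisfying the support property with respect to the whole of $\widetilde{H}^{1,1}(\D_X,\ZZ)$, and non-emptiness, smoothness and projectivity of $M_\sigma(v)$ with the expected Hodge theory --- is entirely black-boxed into \cite{BLMNPS}; as a self-contained proof your sketch would be circular, since these are the theorems one is really citing. Second, a minor imprecision: the specific stability conditions $\sigma_\alpha$ of Theorem \ref{BLMS stability} need not be $v$-generic for your chosen $v$; one must deform within the distinguished connected component $\Stab^*(\D_X)$ to reach a $v$-generic point (this is what the paper's Lemma \ref{generic} and Proposition \ref{good heart} accomplish by lifting paths along the covering map $\pi_X$). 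With those provisos, the proposal matches the proof in the literature that the theorem is citing.
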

In Theorem \ref{derivedTorelli}, the hyperbolic lattice $U$ is corresponding to the sublattice $H^0(S,\ZZ)\oplus H^4(S,\ZZ)$ of the algebraic Mukai lattice $\widetilde{H}^{1,1}(S,\ZZ)$ of a K3 surface $S$.

\subsection{Special cubic fourfolds}
In this subsection, we recall the notion of special cubic fourfolds following Hassett \cite{Has00}.

Let $X$ be a cubic fourfold. If there is a K3 surface $S$ such that $\D_X \simeq D^b(S)$, then we have $\rho(\D_X)\geq1$ by Remark \ref{Picard number}. A cubic fourfold $X$ is special if we have $\rho(\D_X)\geq1$, equivalently $\mathrm{rk}H^{2,2}(X,\ZZ)\geq2$ by Remark \ref{2,2}. The following is the analogue of the notion of polarized K3 surfaces.  

\begin{dfn}[\cite{Has00}]\label{labeled cubic fourfold}
For a positive integer $d$, a labeled cubic fourfold $(X,K)$ of discriminant $d$ is a pair of a special cubic fourfold $X$ and a rank two primitive sublattice $K \subset H^{2,2}(X,\ZZ)$ such that $K$ contains $H^2$ and $\mathrm{disc}(K)=d$. A cubic fourfold $X$ is a special cubic fourfold of discriminant $d$ if there is a rank two primitive sublattice $K \subset H^{2,2}(X,\ZZ)$ such that $(X,K)$ is a labeled cubic fourfold of discriminant $d$.
\end{dfn}
We interpret labeled cubic fourfolds of discriminant $d$ via Mukai lattices.

\begin{rem}[cf. Subsection 2.4 in \cite{AT}]\label{label Mukai}
Let $(X,K)$ be a labeled cubic fourfold of discriminant $d$. We define the rank three primitive sublattice $L_K$ of the algebraic Mukai lattice $\widetilde{H}^{1,1}(\D_X,\ZZ)$ such that $\mathrm{disc}L_K=d$ as follow. 
There is a class $T \in H^{2,2}_{\mathrm{prim}}(X,\ZZ)$ such that $K \cap H^{2,2}_{\mathrm{prim}}(X,\ZZ)=\ZZ \cdot T$. Using Proposition \ref{prim}, we define the class $\kappa_T \in A^\perp_2$  by  $\kappa_T:=v^{-1}_X(T)$. Let $L_K \subset \widetilde{H}^{1,1}(\D_X,\ZZ)$ be the saturation of the sublattice generated by $A_2$ and $\kappa_T$. By Proposition \ref{prim} and the definition of $L_K$, we have the Hodge isometry $v_X: L_K^\perp \iso K^\perp(-1)$, where we take orthogonal complements in $H^*(\D_X,\ZZ)$ and $H^4(X,\ZZ)(-1)$ respectively. Since $K$ and $L_K$ are primitive sublattices of unimodular lattices $H^4(X,\ZZ)$ and $H^*(\D_X,\ZZ)$ respectively, we have 
\[d=\mathrm{disc}(K)=\mathrm{disc}\bigr(K(-1)\bigl)=\mathrm{disc}\bigl(K^\perp(-1)\bigr)=\mathrm{disc}(L_K). \]
\end{rem}

The moduli space $\C$ of cubic fourfolds is a twenty dimensional quasi-projective variety. For a positive integer $d$, denote the subset of special cubic fourfolds of discriminant $d$ by $\C_d$. Hassett \cite{Has00} introduced the two arithmetic conditions on an integer $d$ as follow.
 \begin{itemize}
\item[($*$):] $d>6$ and $d \equiv 0$ or $2$ (mod $6$)
\item[($**$):]  $d$ is not divisible by $4$, $9$, or any odd prime $p \equiv 2$ (mod $3$)
\end{itemize} 
Hassett \cite{Has00} proved that the condition ($*$) is equivalent to the non-emptyness of $\C_d$. 

\begin{rem}[\cite{Has00}]
If an integer $d$ satisfies $(*)$, then the subset $\C_d$ is a subvariety of codimension one in $\C$.
\end{rem}

The condition $(**)$ is related to K3 surfaces.
For labeled cubic fourfolds and polarized K3 surfaces, Hassett \cite{Has00} proved the following theorem.

\begin{thm}[\cite{Has00}]\label{Hassett K3}
Assume that an integer satisfies $(*)$. Let $(X,K)$ be a labeled cubic fourfold of discriminant $d$. The integer $d$ satisfies $(**)$ if and only if there is a polarized K3 surface $(S,h)$ of degree $d$ such that we have a Hodge isometry $K^\perp(-1) \simeq h^\perp$. Here, we take orthogonal complements of $K$ and $h$ in $H^4(X,\ZZ)$ and $H^2(S,\ZZ)$ respectively.
\end{thm}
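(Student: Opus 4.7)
The plan is to reformulate the statement as a comparison of two even lattices of signature $(2,19)$ and discriminant $d$ via their discriminant forms, and then invoke the surjectivity of the period map for polarized K3 surfaces to produce $(S,h)$ from the resulting lattice isometry. The conditions $(*)$ and $(**)$ will emerge as the precise arithmetic obstructions.

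First I would set up the lattice-theoretic side. For $(X,K)$ of discriminant $d$, the lattice $K$ is positive definite of rank two containing $H^2$, so writing $K=\langle H^2, T\rangle$ for a primitive $T\in H^{2,2}_{\mathrm{prim}}(X,\ZZ)$ the Gramian has shape $\begin{pmatrix}3 & a\\ a & b\end{pmatrix}$ with $3b-a^2=d$. Since $H^4(X,\ZZ)$ is unimodular of signature $(21,2)$, the complement $K^\perp$ is even primitive of signature $(19,2)$ with discriminant $d$, so $K^\perp(-1)$ is even of signature $(2,19)$ with discriminant $d$. Likewise, for a polarized K3 $(S,h)$ of degree $d$, the lattice $h^\perp\subset H^2(S,\ZZ)$ is even of signature $(2,19)$ with discriminant $d$. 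Both lattices admit primitive embeddings into the K3 lattice $\Lambda_{K3}=U^{\oplus 3}\oplus E_8(-1)^{\oplus 2}$, the second tautologically, the first abstractly.

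Second, I would apply Nikulin's theorem on primitive embeddings into unimodular lattices: two even lattices of signature $(2,19)$ and discriminant $d$, both admitting primitive embeddings into $\Lambda_{K3}$ with orthogonal complement of rank one, are isomorphic if and only if their discriminant forms agree. The discriminant form of $h^\perp$ is the cyclic form $(\ZZ/d\ZZ,\,-1/d\bmod 2\ZZ)$. The discriminant form of $K^\perp(-1)$ is computed from the Gramian of $K$ and the $(-1)$-twist; the entry $(H^2)^2=3$ imposes a specific arithmetic profile modulo $d$. The condition $(*)$ is precisely what permits the equation $3b-a^2=d$ to admit a solution realizing a primitive $K\subset H^4(X,\ZZ)$, while $(**)$ is precisely the condition under which some such $K$ produces a discriminant form isomorphic to the K3 form.

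Third, for the existence direction, given a lattice isometry $\phi\colon K^\perp(-1)\iso h^\perp$ I would transport the Hodge structure from the cubic. Since $K\subset H^{2,2}(X,\ZZ)$, the line $H^{3,1}(X)\subset K^\perp\otimes\CC$, and after the Tate twist this gives the $(2,0)$-line on $K^\perp(-1)$; via $\phi$, this endows $h^\perp$ with a weight-two Hodge structure. Extending orthogonally by declaring $h\in\Lambda_{K3}$ to be of type $(1,1)$ yields a weight-two Hodge structure on the full K3 lattice satisfying the Riemann bilinear relations, inherited from those on $H^4(X,\CC)$. The surjectivity of the period map for $d$-polarized K3 surfaces then produces $(S,h)$ realizing this Hodge structure, with the tautological Hodge isometry $K^\perp(-1)\simeq h^\perp$.

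The main obstacle is the arithmetic analysis in the second step: a prime-by-prime matching of discriminant forms to extract exactly the clauses of $(**)$. The interaction of $(H^2)^2=3$ with the prime decomposition of $d$ makes the $2$-adic and $3$-adic local analyses delicate, and the conditions that $d$ is not divisible by $4$, $9$, or by any odd prime $p\equiv 2\pmod 3$ correspond respectively to the vanishing of obstructions at $2$, at $3$, and at odd primes for which $3$ is a quadratic non-residue.
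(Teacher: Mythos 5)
This statement is quoted from Hassett \cite{Has00} and the paper supplies no proof of its own, so your proposal can only be measured against the cited argument --- which it reproduces in outline quite faithfully. The reduction to an abstract lattice isomorphism (both $K^\perp(-1)$ and $h^\perp$ are even of signature $(2,19)$ and determinant $d$, and Nikulin's uniqueness criterion for indefinite even lattices of rank $\geq \ell(A)+2$ reduces the comparison to discriminant forms), and the third step (transport the period line $H^{3,1}(X)\subset K^\perp\otimes\CC$, note that the $(-1)$--twist converts the negative definite plane $H^{3,1}\oplus H^{1,3}$ into the positive definite plane a K3 period point requires, and invoke surjectivity of the period map) are both correct and are exactly Hassett's strategy.

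The genuine gap is that the step in which $(**)$ actually appears is only announced, not performed. You assert that the discriminant forms of $K^\perp(-1)$ and $h^\perp$ agree precisely when $d$ is not divisible by $4$, $9$, or an odd prime $p\equiv 2 \pmod 3$, and you yourself flag this prime--by--prime matching as ``the main obstacle''; but that computation \emph{is} the theorem --- without it nothing about $(**)$ has been established, in either direction. If you carry it out, two further points need care. First, $H^4(X,\ZZ)$ is an \emph{odd} unimodular lattice and $K$ is odd (it contains $H^2$ with $(H^2)^2=3$), so $q_K$ is not a $\QQ/2\ZZ$--valued quadratic form and the naive gluing formula $q_{K^\perp}\simeq -q_K$ does not literally make sense; one should instead work inside the even lattice $H^4_{\mathrm{prim}}(X,\ZZ)\simeq E_8^{\oplus 2}\oplus U^{\oplus 2}\oplus A_2$, using the rank--one lattice $K\cap H^{2,2}_{\mathrm{prim}}(X,\ZZ)=\ZZ T$ as in Remark \ref{label Mukai} (this is also where one sees that $A_{K^\perp}$ fails to be cyclic exactly when $9\mid d$, matching one clause of $(**)$). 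Second, surjectivity of the period map a priori yields only a quasi--polarized K3 surface; to obtain $h$ \emph{ample}, as the statement requires, one must still argue (e.g.\ by acting with reflections in $(-2)$--classes and $\pm\mathrm{id}$) that the period point can be realized with an ample class of square $d$, and that this does not disturb the Hodge isometry on $h^\perp$.
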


In the context of derived categories, the following is known.
\begin{thm}[\cite{AT}, \cite{BLMNPS}]
Let $X$ be a cubic fourfold. There is a K3 surface $S$ such that $\D_X \simeq D^b(S)$ if and only if there is an integer $d$ satisfying $(*)$ and $(**)$ such that $X \in \C_d$.
\end{thm}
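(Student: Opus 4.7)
The plan is to reduce everything to the lattice criterion of Theorem~\ref{derivedTorelli}, which states that $\D_X\simeq D^b(S)$ for some K3 surface $S$ is equivalent to the existence of a primitive embedding of the hyperbolic plane $U$ into the algebraic Mukai lattice $\widetilde{H}^{1,1}(\D_X,\ZZ)$. It therefore suffices to prove the purely lattice-theoretic equivalence
\begin{equation*}
U\hookrightarrow\widetilde{H}^{1,1}(\D_X,\ZZ)\ \text{primitively}
\iff
X\in\C_d\ \text{for some}\ d\ \text{satisfying}\ (*)\ \text{and}\ (**).
\end{equation*}

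For the ``$\Leftarrow$'' direction, I would start from a label $(X,K)$ of discriminant $d$ with $d$ satisfying both conditions. Remark~\ref{label Mukai} produces a rank three primitive sublattice $L_K\subset\widetilde{H}^{1,1}(\D_X,\ZZ)$ containing $A_2$ with $\mathrm{disc}(L_K)=d$. The key observation is that $A_2$ carries, up to sign, the norm form of the Eisenstein integers $\ZZ[\omega]$, and the conditions $(*)$, $(**)$ on $d$ are designed precisely so that $L_K$ represents zero, i.e.\ admits an isotropic vector $e$. A partner isotropic vector $f$ with $(e,f)=1$ is then extracted using the unimodularity of the ambient Mukai lattice $H^*(\D_X,\ZZ)$, yielding the required primitive copy of $U$.

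Conversely, for the ``$\Rightarrow$'' direction, starting from a primitive embedding $U=\ZZ e\oplus\ZZ f\hookrightarrow\widetilde{H}^{1,1}(\D_X,\ZZ)$, I would combine $U$ with $A_2=\langle\lambda_1,\lambda_2\rangle$ to form a rank three primitive sublattice $L\supset A_2$ whose discriminant is an explicit integer polynomial in the pairings $(\lambda_i,e)$, $(\lambda_i,f)$ and the coefficients of the chosen extra generator. Tuning these parameters yields a discriminant $d$ satisfying $(*)$ and $(**)$; setting $L=L_K$ and running Remark~\ref{label Mukai} backwards produces a label $K\subset H^{2,2}(X,\ZZ)$, showing $X\in\C_d$.

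The main obstacle is the arithmetic bookkeeping in this equivalence: $(*)$ encodes the admissible discriminants of rank three primitive extensions of $A_2$ inside a unimodular lattice of signature $(4,20)$, while $(**)$ encodes when such an extension admits an isotropic vector, equivalently when $d$ is, up to the factor of $3$ coming from $A_2$, a norm from the Eisenstein integers $\ZZ[\omega]$. Verifying both directions rigorously requires a case analysis of genera of even rank three lattices of signature $(2,1)$ containing $A_2$, combined with Nikulin's discriminant-form calculus to control how these lattices primitively embed into $U^{\oplus4}\oplus E_8(-1)^{\oplus2}$. This lattice-theoretic case analysis is the number-theoretic heart of the Addington--Thomas theorem and cannot be avoided in any proof.
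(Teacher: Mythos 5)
The paper does not prove this statement: it is quoted directly from \cite{AT} and \cite{BLMNPS}, so there is no internal argument to measure yours against. Your opening move --- reducing via Theorem \ref{derivedTorelli} to a purely lattice-theoretic equivalence about $\widetilde{H}^{1,1}(\D_X,\ZZ)$ --- is the standard one (it is how Addington argues in \cite{Ad}), and you have correctly located the arithmetic content in the Eisenstein norm form and in rank-three overlattices of $A_2$. The problem is that the one concrete mechanism you actually propose is flawed, and everything else is deferred.

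In the direction from $(*)$, $(**)$ to the hyperbolic plane, you produce an isotropic vector $e\in L_K$ and then ``extract a partner $f$ with $(e,f)=1$ using the unimodularity of the ambient Mukai lattice.'' This cannot work as stated: unimodularity of $H^*(\D_X,\ZZ)$ only supplies $f$ in the \emph{full} Mukai lattice, whereas Theorem \ref{derivedTorelli} requires the copy of $U$ to lie in the algebraic part $\widetilde{H}^{1,1}(\D_X,\ZZ)$, which is not unimodular; finding $U$ inside $H^*(\D_X,\ZZ)\cong U^{\oplus4}\oplus E_8(-1)^{\oplus2}$ is vacuous. What a primitive isotropic $e$ really gives you is a copy of $U(k)$ in $\widetilde{H}^{1,1}(\D_X,\ZZ)$, where $k$ is the divisibility of $e$ in the algebraic lattice, and forcing $k=1$ is exactly where the ``not divisible by $4$ or $9$'' clauses of $(**)$ enter. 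Relatedly, your claim that $(*)$ and $(**)$ are ``designed precisely so that $L_K$ represents zero'' conflates two different conditions: representing zero corresponds only to the weaker requirement that primes $\equiv 2 \pmod 3$ divide $d$ to even order (equivalently $a^2d=2(x^2+xy+y^2)$ with $x,y$ not necessarily coprime), which admits $U(k)$ for some $k$ but not necessarily $U$; Addington treats these as genuinely distinct statements. The converse direction has the analogous divisibility issue, plus an unaddressed point: you must check that the rank-three lattice $L$ built from $U$ and $A_2$ really is $L_K$ for an honest label $K\subset H^{2,2}(X,\ZZ)$ (in particular $L\cap A_2^\perp\neq 0$, $L$ primitive, and $d>6$). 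Since you explicitly postpone ``the number-theoretic heart'' to a genus-by-genus case analysis that you do not carry out, the proposal is a plan for a proof rather than a proof, and its one explicit step proves the wrong statement.
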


\section{Stability conditions}
In this section, we review facts about stability conditions on Kuznetsov components of cubic fourfolds and derived categories of K3 surfaces.

\subsection{Weak stability conditions}
In this subsection, we introduce the notion of (weak) stability conditions following \cite{Bri07} and \cite{BLMS}.

Let $\D$ be a triangulated category over $\CC$. The definition of weak stability conditions and stability conditions on $\D$ is the following.

\begin{dfn}
Fix a finitely generated free abelian group $\Lambda$ and a surjective group homomorphism $\mathrm{cl}:K_0(\D) \to \Lambda$. A weak stability condition on $\D$  \rm{(}\it{}with respect to $\Lambda$\rm{)} is a pair $\sigma=(Z,\A)$ of a group homomorphism $Z: \Lambda \to \CC$  \rm{(}\it{}called a central charge\rm{)} \it{}and the heart of a bounded t-structure $\A$ in $\D$ such that the following three properties hold.
\begin{itemize}
\item[(i)] For any object $E \in \A$, we have $\mathrm{Im}Z(\mathrm{cl}(E))\geq 0$ and if $\mathrm{Im}Z(\mathrm{cl}(E))=0$, then $Z(\mathrm{cl}(E)) \in \RR_{\leq0}$ holds. For simplicity, we will denote $Z(\mathrm{cl}(E))$ by $Z(E)$ for an object $E \in \D$.

We prepare terminologies to state \rm{}(ii) \it{}and \rm{}(iii)\it{}. For $E \in \A$ with $\mathrm{Im}Z(E)>0$, we define the slope $\mu_\sigma(E)$ of $E$ with respect to $\sigma$ as 
\[ \mu_\sigma(E):=-\frac{\mathrm{Re}Z(E)}{\mathrm{Im}Z(E)}. \]
For $E \in \A$ with $\mathrm{Im}Z(E)=0$, we put $\mu_\sigma(E):=\infty$. For a nonzero object $E \in \A$, $E$ is $\sigma$-semistable if for any subobject $F$ of $E$ in $\A$, we have $\mu_\sigma(F)\leq \mu_\sigma(E)$.
\item[(ii)]For any $E \in \A$, there exists a filtration
\[0=E_0 \subset E_1 \subset \cdot \cdot \cdot \subset E_n=E\]
in $\A$ such that for any $1 \leq k \leq n$, the quotient $F_k:=E_k/E_{k-1}$ is $\sigma$-semistable with 
\[ \mu_\sigma(F_1)>\mu_\sigma(F_2)>\cdot \cdot \cdot >\mu_\sigma(F_n) .\]
This filtration is called a Harder-Narasimhan filtration of $E$ with respect to $\sigma$. 
 
\item[(iii)]There exists a quadratic form $Q$ on $\Lambda \otimes \RR$ such that $Q|_{\mathrm{Ker}Z}$ is negative definite and $Q(\mathrm{cl}(E))\geq0$ for any $\sigma$-semistable object $E \in \A$. This property is called the support property.
\end{itemize}
A weak stability condition $\sigma=(Z,\A)$ on $\D$ is a stability condition on $\D$ if $Z(E)\neq 0$ holds for any nonzero object $E \in \A$.
\end{dfn}

\begin{rem}[\cite{BLMS}, Remark 2.6]
Let $\sigma=(Z,\A)$ be a weak stability condition on $\D$ with respect to $\Lambda$. If $\mathrm{rk}\Lambda=2$ and $Z:\Lambda \to \CC$ is injective, any positive semi-definite quadratic form $Q$ on $\Lambda \otimes \RR$ satisfies the support property. 
\end{rem}

Bridgeland \cite{Bri07} proved that the set $\Stab(\D)$ of stability conditions on $\D$ with respect to $\Lambda$ has a structure of a complex manifold.

\begin{rem}[\cite{Bri07}]
Let $\sigma=(Z,\A)$ be a stability condition on $\D$ with respect to $\Lambda$. For an object $E \in \A \setminus\{0\}$, we define the phase $\phi_\sigma(E)$ of $E$ with respect to $\sigma$ by
\[\phi_{\sigma}(E):=\frac{1}{\pi}\mathrm{arg}Z(E) \in (0, 1].\]
For a real number $\phi \in (0,1]$, we define the full subcategory $\P_\sigma(\phi)$ in $\A$ by
\[\P_\sigma(\phi):=\{E \in \A \mid \text{$E$ is $\sigma$-semistable with $\phi_\sigma(E)=\phi$}\} \cup \{0\}.\] 
For any real number $\phi \in (0,1]$, the full subcategory $\P_\sigma(\phi)$ is an abelian subcategory of $\A$.
\end{rem}

We use weak stability conditions to construct new hearts of bounded t-structures. By the existence of Harder-Narasimhan filtrations, we have the following torsion pairs and they produce new hearts of bounded t-structures. 
\begin{dfn}\label{tilt}
Let $\sigma=(Z,\A)$ be a weak stability condition on $\D$. For $\mu \in \RR$, we define a torsion pair $(\T^\mu_\sigma, \F^\mu_\sigma)$ on $\A$ as 
\[\T^\mu_\sigma:=\langle E \in \A \mid \text{$E$ is $\sigma$-semistable with $\mu_\sigma(E)>\mu \rangle$} ,\]
\[ \F^\mu_\sigma:=\langle E \in \A \mid \text{$E$ is $\sigma$-semistable with $\mu_\sigma(E) \leq \mu$} \rangle,\]
where $\langle - \rangle$ is the extension closure. We define the heart $\A^{\mu}_\sigma$ of a bounded t-structure as 
\[ \A^\mu_\sigma:=\langle \F^\mu_\sigma[1], \T^\mu_\sigma \rangle.\]
We say that $\A^\mu_\sigma$ is obtained by the tilting of $\A$ with respect to the torsion pair $(\T^\mu_\sigma, \F^\mu_\sigma)$.
\end{dfn}

\subsection{Stability conditions on K3 surfaces}
In this subsection, we recall examples of stability conditions on derived categories of K3 surfaces. 

Let $S$ be a K3 surface. Using the group homomorphism $v: K_0(S) \to \widetilde{H}^{1,1}(S,\ZZ)$, we consider only stability conditions on $D^b(S)$ with respect to $\widetilde{H}^{1,1}(S,\ZZ)$. Take $\RR$-divisors $\beta, \omega \in \mathrm{NS}(S)_{\RR}$ such that $\omega$ is an ample class. The weak stability condition $\sigma_\omega=(Z_\omega, \mathrm{Coh}(S))$ is given by 
\[Z_\omega(E):=i \mathrm{ch}_0(E)-\mathrm{ch}_1(E)\cdot \omega\]
for an object $E \in D^b(S)$. It is nothing but the slope stability on $\mathrm{Coh}(S)$. 
Using Definition \ref{tilt}, we define the heart $\A_{\beta,\omega}$ of a bounded t-structure on $D^b(S)$ by
\[\A_{\beta,\omega}:=\mathrm{Coh}(S)^{\beta \cdot \omega}_{\sigma_{\omega}}.\]
Let $Z_{\beta,\omega}: \widetilde{H}^{1,1}(S,\ZZ) \to \CC$ be the group homomorphism defined by
\[Z_{\beta,\omega}(w):=(e^{\beta+i\omega},w),\] 
where $w \in \widetilde{H}^{1,1}(S,\ZZ)$. 
Bridgeland \cite{Bri08} proved the following theorem.

\begin{thm}[Lemma 6.2 in \cite{Bri08}]\label{stability K3}
If $Z_{\beta, \omega}(E) \notin \RR_{<0}$ holds for any spherical sheaf $E$ on $S$, then the pair $\sigma_{\beta,\omega}:=(Z_{\beta,\omega}, \A_{\beta,\omega})$ is a stability condition on $D^b(S)$. If $\omega^2>2$,  we have $Z_{\beta, \omega}(E) \notin \RR_{<0}$ for any spherical sheaf $E$ on $S$.\end{thm}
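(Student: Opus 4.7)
The plan is to verify the axioms of a Bridgeland stability condition directly on the tilted heart $\A_{\beta,\omega}$, with the spherical-sheaf hypothesis entering precisely to ensure strict negativity of $Z_{\beta,\omega}$ on the real axis, and then to handle the second assertion by a separate Hodge-index computation. As a preliminary, for $E \in D^b(S)$ with Mukai vector $v(E) = (r, c, s)$, expanding $e^{\beta+i\omega} = (1,\, \beta+i\omega,\, \tfrac{\beta^2-\omega^2}{2}+i\beta\cdot\omega)$ and applying the Mukai pairing yields
\begin{equation*}
\mathrm{Im}\, Z_{\beta,\omega}(E) = \omega \cdot (c - r\beta),
\qquad
\mathrm{Re}\, Z_{\beta,\omega}(E) = \beta\cdot c - s - \tfrac{r(\beta^2-\omega^2)}{2}.
\end{equation*}

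By definition of the tilt $\A_{\beta,\omega} = \langle \F^{\beta\cdot\omega}_{\sigma_\omega}[1], \T^{\beta\cdot\omega}_{\sigma_\omega} \rangle$, the inequality $\mathrm{Im}\, Z_{\beta,\omega} \geq 0$ holds on each $\mu_\omega$-semistable building block and extends to the whole heart by extension closure. The subtle refinement is showing $Z_{\beta,\omega}(E) \in \RR_{<0}$ for every nonzero $E$ with $\mathrm{Im}\, Z_{\beta,\omega}(E) = 0$. Any such $E$ is an iterated extension of $0$-dimensional sheaves together with shifts $F[1]$ of $\mu_\omega$-semistable torsion-free sheaves $F$ of slope exactly $\beta\cdot\omega$. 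For the $0$-dimensional factors, $Z_{\beta,\omega}$ equals minus the length. For the $F[1]$ factors, $Z_{\beta,\omega}(F[1]) = -Z_{\beta,\omega}(F)$ is real, and the Bogomolov-Gieseker inequality $v(F)^2 \geq -2$ combined with $\mathrm{Im}\, Z_{\beta,\omega}(F) = 0$ and Hodge index forces any $\mu_\omega$-Jordan-H\"older factor $G$ of $F$ that obstructs strict negativity to satisfy $v(G)^2 = -2$, hence $G$ is spherical; the hypothesis then rules this out.

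Next I would establish Harder-Narasimhan filtrations via the standard criterion of \cite{Bri07, Bri08}: $\A_{\beta,\omega}$ is Noetherian since it is the tilt of $\mathrm{Coh}(S)$ at a torsion pair, and the image of $Z_{\beta,\omega}$ is locally finite in $\CC$. For the support property I would take $Q := (-)^2$, the quadratic form on $\widetilde{H}^{1,1}(S,\ZZ)\otimes\RR$ induced by the Mukai pairing: the Bogomolov-Gieseker inequality $v(E)^2 \geq -2$ for $\mu_\omega$-semistable sheaves lifts to $\sigma_{\beta,\omega}$-semistable objects by the usual comparison across the two halves of the tilt, while negative-definiteness of $Q$ on $\mathrm{Ker}\, Z_{\beta,\omega}$ follows from the Hodge-index theorem applied to $\beta + i\omega$.

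For the second assertion, suppose $E$ is a spherical sheaf with $v(E) = (r, c, s)$ and $Z_{\beta,\omega}(E) \in \RR_{<0}$. The case $r = 0$ is ruled out directly: any nonzero effective $c$ forces $c \cdot \omega > 0$, contradicting $\mathrm{Im}\, Z = 0$. For $r \geq 1$, the vanishing $(c - r\beta)\cdot\omega = 0$ combined with $\omega^2 > 0$ yields $(c - r\beta)^2 \leq 0$ by Hodge index, i.e.\ $c^2 + r^2\beta^2 \leq 2rc\cdot\beta$. Substituting the sphericity relation $c^2 = 2rs - 2$ and combining with $\mathrm{Re}\, Z_{\beta,\omega}(E) < 0$ gives, after dividing by $r$,
\begin{equation*}
s - \tfrac{1}{r} + \tfrac{r\beta^2}{2} \;\leq\; c\cdot\beta \;<\; s + \tfrac{r\beta^2}{2} - \tfrac{r\omega^2}{2},
\end{equation*}
which forces $r^2\omega^2 < 2$, hence $\omega^2 < 2/r^2 \leq 2$, contradicting $\omega^2 > 2$. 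The hardest step in the overall argument is the support property: promoting Bogomolov-Gieseker from $\mathrm{Coh}(S)$ to all $\sigma_{\beta,\omega}$-semistable objects in the tilted heart requires a careful wall-crossing analysis, which is the core technical content of \cite{Bri08}.
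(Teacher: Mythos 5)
The paper does not actually prove this statement---it is quoted directly from Bridgeland's Lemma 6.2 in \cite{Bri08}---and your sketch is a faithful reconstruction of Bridgeland's own argument: positivity on the tilted heart reduced to zero-dimensional sheaves and shifts of slope-$(\beta\cdot\omega)$ semistable sheaves, Harder--Narasimhan filtrations from Noetherianity and local finiteness, the support property from the Mukai form (whose negative definiteness on $\mathrm{Ker}\,Z_{\beta,\omega}$ is exactly the orthogonal complement of a positive plane in signature $(2,\rho)$), and a correct Hodge-index computation for the $\omega^2>2$ assertion. The only caveat is that Bridgeland's hypothesis is $Z(E)\notin\RR_{\leq 0}$: excluding only $\RR_{<0}$, as the paper's statement (and hence your argument) does, still leaves open $Z(G)=0$ for a spherical Jordan--H\"older factor $G$, which would violate the nondegeneracy axiom for a stability condition---an imprecision inherited from the paper's transcription rather than a flaw in your reasoning.
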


\subsection{Spaces of stability conditions on K3 surfaces}
In this subsection, we recall structures of spaces of stability conditions on K3 surfaces.

Let $S$ be a K3 surface. Let $\Stab(S)$ be the space of stability conditions on $D^b(S)$ with respect to $\widetilde{H}^{1,1}(S, \ZZ)$. 
We define the subset $\P(S)$ of $\widetilde{H}^{1,1}(S, \ZZ)\otimes \CC$  as 
 \[\P(S):=\{\Omega \in \widetilde{H}^{1,1}(S, \ZZ)\otimes \CC \mid \text{$\langle \mathrm{Re}(\Omega), \mathrm{Im}(\Omega) \rangle_{\mathbb{R}}$ is a positive definite plane} \}.\] 
Let $\P^+(S)$ be the connected component of $\P(S)$ containing $e^{i\omega}$, where $\omega$ is an ample divisor on $S$. 
Let $\Delta_S$ be the set of $(-2)$-classes in $\widetilde{H}^{1,1}(S, \ZZ)$. We define the subset $\P^+_0(S)$ of $\P^+(S)$ as 
\[\P^+_0(S):=\P^+(S)\setminus \bigcup_{\delta \in \Delta_S} \delta^\perp.\]
We consider the action of the autoequivalence group $\Aut(D^b(S))$ on $\Stab(S)$.
\begin{dfn}
For an autoequivalence $\Phi \in \Aut(D^b(S))$ and a stability condition\\ $\sigma=(Z,\A) \in \Stab(S)$, we define the stability condition $\Phi\sigma$ by
 \[\Phi\sigma:=(Z \circ (\Phi^H)^{-1}, \Phi(\A)) \in \Stab(S). \]
\end{dfn}
 
Since the Mukai pairing $(-,-)$ on $\widetilde{H}^{1,1}(S, \ZZ)$ is non-degenerate, for a stability condition $\sigma=(Z,\A) \in \mathrm{Stab}(S)$, there is the unique element $\Omega_Z \in \widetilde{H}^{1,1}(S, \ZZ)\otimes \CC$ such that $Z(-)=(\Omega_Z,-)$.  Let $\Stab^*(S) \subset \Stab(S)$ be the connected component containing stability conditions in Theorem \ref{stability K3}.
Bridgeland \cite{Bri08} proved the following theorem.

\begin{thm}[\cite{Bri08}]\label{BriK3}
For a stability condition $\sigma=(Z,\A) \in \Stab^*(S)$, put $\pi_S(\sigma):=\Omega_Z$.
Then $\pi_S$ induces the covering map $\pi_S: \Stab^*(S) \to \P^+_0(S)$. We define 
\[\Aut^0(D^b(S)):=\{\Phi \in \Aut(D^b(S)) \mid \Phi^H=\mathrm{id}, \Phi(\Stab^*(S)) \subset \Stab^*(S)\}.\]
Then the natural homomorphism $\Aut^0(D^b(S)) \to \mathrm{Deck}(\pi_S)$ is an isomorphism, where $\mathrm{Deck}(\pi_S)$ is the group of deck transformations of the covering map $\pi_S$.
\end{thm}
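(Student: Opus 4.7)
The plan is to follow Bridgeland's original strategy: first show that $\pi_S$ is a well-defined local homeomorphism onto its image, then identify the image with $\P^+_0(S)$, and finally relate the deck group to $\Aut^0(D^b(S))$ through covering-space theory.

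First I would verify that $\pi_S$ is well-defined: given $\sigma=(Z,\A)\in\Stab^*(S)$, non-degeneracy of the Mukai pairing on $\widetilde{H}^{1,1}(S,\ZZ)$ lets me encode $Z$ uniquely as pairing with $\Omega_Z\in\widetilde{H}^{1,1}(S,\ZZ)\otimes\CC$, so $\pi_S$ is holomorphic. By Bridgeland's general deformation theorem, a stability condition can be deformed along small perturbations of its central charge, and the support property guarantees that such deformations are locally unique; this shows $\pi_S$ is a local homeomorphism onto some open subset of $\widetilde{H}^{1,1}(S,\ZZ)\otimes\CC$.

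Next I would check that the image lies in $\P^+_0(S)$. Positivity of the plane $\langle\mathrm{Re}\Omega_Z,\mathrm{Im}\Omega_Z\rangle_\RR$ follows from the support property: for any $\sigma$-semistable object $E$ with Mukai vector $v=v(E)$, one has $Q(v)\geq 0$ for the supporting form $Q$, and running through the classification of elements $v$ with $Z(v)=0$ together with the fact that $\Stab^*(S)$ contains the Bridgeland chamber of Theorem~\ref{stability K3} forces the signature $(2,0)$ in the relevant plane. Connectedness inside $\Stab^*(S)$ and the explicit form $\Omega_{\beta,\omega}=e^{\beta+i\omega}$ place the image in the correct component $\P^+(S)$. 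To rule out $\Omega_Z\in\delta^\perp$ for some $(-2)$-class $\delta$: if this held, then a stable spherical object $E$ with $v(E)=\delta$ would satisfy $Z(E)=0$, contradicting the stability axiom $Z(E)\neq 0$ on nonzero objects of $\A$; one reduces to the existence of such an $E$ via deformation from the chamber of Theorem~\ref{stability K3}, where spherical sheaves realize every $(-2)$-class up to the $\widetilde{\mathrm{GL}}^+_2(\RR)$-action.

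Then I would establish that $\pi_S:\Stab^*(S)\to\P^+_0(S)$ is a covering. The local homeomorphism property reduces this to showing that $\pi_S$ has the path-lifting property over $\P^+_0(S)$. Given a path $\Omega_t$ in $\P^+_0(S)$ starting from $\pi_S(\sigma_0)$, one lifts it by iterated deformation; the only way this could fail to extend to $t=1$ is if the lift approaches a wall of the $\Stab^*(S)$ chamber where some semistable object has its central charge tending to $0$, but avoidance of $\delta^\perp$ precisely excludes this for spherical classes, and the positivity on non-spherical isotropic classes is handled by the same support-property bookkeeping. Finally, for the deck-group identification, any $\Phi\in\Aut^0(D^b(S))$ satisfies $\pi_S(\Phi\sigma)=\pi_S(\sigma)$ by $\Phi^H=\mathrm{id}$, giving a homomorphism to $\mathrm{Deck}(\pi_S)$; injectivity follows because a $\Phi$ fixing every stability condition in a chamber must fix all the hearts $\A$ and hence be trivial on the t-structure, and surjectivity is obtained by lifting a deck transformation to an autoequivalence using the universality of a chosen fiber together with the fact that two stability conditions in the same fiber are related by an autoequivalence inducing the identity on cohomology.

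The main obstacle is the global path-lifting/covering step: controlling the lift near the boundary of chambers requires simultaneously ensuring that no sequence of $\sigma_t$-semistable objects produces a zero central charge in the limit, which is where the $(-2)$-class removal in the definition of $\P^+_0(S)$ is essential, and where wall-and-chamber analysis in $\Stab^*(S)$ does the real work.
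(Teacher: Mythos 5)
This theorem is not proved in the paper at all: it is quoted verbatim from Bridgeland's \emph{Stability conditions on K3 surfaces} \cite{Bri08} (Theorem 1.1 there), so there is no in-paper argument to compare against. Your sketch correctly reproduces the architecture of Bridgeland's proof (local homeomorphism via the deformation theorem, image contained in $\P^+_0(S)$ via exclusion of $(-2)$-walls, covering property, deck-group identification), but two of the steps you state as if they were routine are in fact the hard content, and as written they do not close.

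First, the deck-group identification. For surjectivity you invoke ``the fact that two stability conditions in the same fiber are related by an autoequivalence inducing the identity on cohomology'' --- but that \emph{is} the statement that $\Aut^0(D^b(S))$ surjects onto $\mathrm{Deck}(\pi_S)$ (indeed it is the stronger statement that the covering is regular), so the argument is circular; Bridgeland has to prove fiber-transitivity by showing that every stability condition in the component can be moved by an autoequivalence into the closure of the geometric chamber, using the classification of hearts and of stable spherical objects. For injectivity, ``$\Phi$ fixes all the hearts $\A$ and hence is trivial'' is not a valid implication: an autoequivalence preserving a t-structure need not be isomorphic to the identity. The actual argument is that a $\Phi$ fixing a geometric stability condition permutes its stable objects of class $(0,0,1)$ and phase $1$, which are exactly the skyscraper sheaves; this produces an automorphism of $S$ together with a line-bundle twist, both of which are killed by $\Phi^H=\mathrm{id}$ and the Torelli theorem. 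Second, a smaller but real point: a local homeomorphism with the path-lifting property is not automatically a covering map, and Bridgeland does not argue this way --- he proves the covering property directly by exhibiting evenly covered neighborhoods using the (generalized) metric on $\Stab^*(S)$ and quantitative deformation estimates. If you intend to go through path lifting you must also supply unique lifting and homotopy lifting, or some properness of $\pi_S$ over compact sets, and that is precisely where the wall analysis you defer to would have to be carried out.
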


\subsection{Clifford algebra associated to a line on a cubic fourfold}
In this subsection, we recall the Clifford algebra on the projective space $\mathbb{P}^3$ associated to a line on a cubic fourfold in \cite{Kuz08} and Section 7 in \cite{BLMS}. 

Let $X$ be a cubic fourfold. Take a line $l \subset X$. Consider the blowing up $p_l: \mathrm{Bl}_lX \to X$ of $X$ along the line $l$. We have the embedding $j_l: \mathrm{Bl}_lX \to \mathrm{Bl}_l\mathbb{P}^5$, where $\mathrm{Bl}_l\mathbb{P}^5 \to \mathbb{P}^5$ is the blowing up of $\mathbb{P}^5$ along the line $l$. The linear projection from $l$ induces the following commutative diagram.
\[ \xymatrix{ &\mathrm{Bl}_lX \ar@{^{(}-{>}} [r]^{j_l} \ar[d]_{p_l}& \mathrm{Bl}_l{\mathbb{P}^5} \ar[d] \ar[rd]^{q_l} && \\ l \ar@{^{(}-{>}} [r] & X \ar@{^{(}-{>}} [r] & \mathbb{P}^5 \ar@{.{>}} [r]& \mathbb{P}^3  & } \]
Denote $h:=\mathrm{c}_1(\OO_{\mathbb{P}^3}(1))$. Note that $q_l:\mathrm{Bl}_l\mathbb{P}^5 \to \mathbb{P}^3$ is a $\mathbb{P}^2$-bundle via the isomorphism $\mathrm{Bl}_l\mathbb{P}^5 \simeq  \mathbb{P}(\OO_{\mathbb{P}^3}^{\oplus2} \oplus \OO_{\mathbb{P}^3}(-h))$. The composition $\pi_l:=q_l \circ j_l: \mathrm{Bl}_lX \to \mathbb{P}^3$ is a conic fibration. Let $\mathcal{B}^l_0$ (resp. $\mathcal{B}^l_1$) be the even part (resp. the odd part) of the sheaf of Clifford algebras on $\mathbb{P}^3$ associated to $\pi_L$. For $k \in \ZZ$, we define the $\mathcal{B}^l_0$-bimodule $\B^l_k$ by $\B^l_k=\mathcal{B}^l_{k-2} \otimes \OO_{\mathbb{P}^3}(h)$. Let $\mathrm{Coh}(\mathbb{P}^3, \mathcal{B}^l_0)$ be the category of coherent right $\mathcal{B}^l_0$-modules and define $D^b(\mathbb{P}^3, \mathcal{B}^l_0):=D^b(\mathrm{Coh}(\mathbb{P}^3, \mathcal{B}^l_0))$. Note that $\mathbf{L}p_l^*: D^b(X) \to D^b(\mathrm{Bl}_lX)$ is a fully faithful functor. There exists a coherent sheaf $\mathcal{E}_l$ of right $\pi^*_l\mathcal{B}^l_0$-modules on $\mathrm{Bl}_lX$ such that $\Psi_l(-):=\mathbf{R}{\pi_l}_*(-\otimes \OO_{\mathrm{Bl}_lX}(h) \otimes \mathcal{E}_l)[1]$ is a fully faithful functor from $\mathbf{L}p_l^*\D_X$ to $D^b(\PP^3,\mathcal{B}^l_0)$ and there is a semi-orthogonal decomposition $D^b(\PP^3,\mathcal{B}^l_0)=\langle \Psi_l(\mathbf{L}p_l^*\D_X), \B^l_1, \B^l_2, \B^l_3 \rangle$. (See \cite{BLMS}, Proposition 7.7.)
\begin{rem}\label{exact}
By Section 4 in \cite{Kuz08}, the coherent right $\B^L_0$-module $\E_L$ fits into the exact sequence
\[ 0 \to q^*_l\B^l_{-1}(-2H) \to q^*_l\B^l_0(-H) \to j_{l*}\E_l \to 0.\]
\end{rem}

\subsection{Stability conditions on Kuznetsov components}
In this subsection, we recall the examples of stability conditions on Kuznetsov components of cubic fourfolds in \cite{BLMS}. 

Let $X$ be a cubic fourfold. Using the group homomorphism $v: K_0(\D_X) \to \widetilde{H}^{1,1}(\D_X, \ZZ)$ in Remark \ref{v}, we consider only stability conditions on $\D_X$ with respect to $\widetilde{H}^{1,1}(\D_X, \ZZ)$.  Fix a line $l$ on $X$. 
We define the Chern character map $\ch_{\mathcal{B}^l_0} : K_0(D^b(\mathbb{P}^3, \mathcal{B}^l_0)) \to H^*(\mathbb{P}^3, \QQ)$ as
\[ \ch_{\mathcal{B}^l_0}(E):=\ch(\mathrm{forg}(E))\Bigl(1-\frac{11}{32}h^2\Bigr), \]
where $E\in D^b(\mathbb{P}^3,\mathcal{B}^l_0)$ and $\mathrm{forg}:D^b(\mathbb{P}^3, \mathcal{B}^l_0) \to D^b(\mathbb{P}^3)$ is the forgetful functor.
For $\beta \in \RR$, we define the twisted Chern character map $\ch^{\beta}_{\mathcal{B}^l_0}: K_0(D^b(\mathbb{P}^3, \mathcal{B}^l_0)) \to H^*(\mathbb{P}^3, \RR)$ as 
$\chb:=e^{-\beta h}\ch_{\mathcal{B}^l_0}$. The chern character map $\ch_{\mathcal{B}^l_0}$ is same as $\ch^0_{\B^l_0}$. Using the isomorphism 
\[ \RR^4 \iso H^*(\PP^3,\RR), (x_1,x_2,x_3,x_4) \mapsto (x_1, x_2h, x_3h^2, x_4h^3),\] 
we regard $\chb(E)=(\ch^{\beta}_{\mathcal{B}^l_0,0}(E),\chbi(E),\chbii(E),\chbiii(E)) \in \RR^4$  for $E\in D^b(\PP^3,\mathcal{B}^l_0)$.  For $j=1,2$, we define the finitely generated free abelian group $\Lambda^j_{\mathcal{B}^l_0}$ as 
\[\Lambda^j_{\mathcal{B}^l_0}:=\langle \ch_{\mathcal{B}^l_0,k}(E)\mid E \in D^b(\PP^3,\mathcal{B}^l_0),0 \leq k \leq j \rangle_\ZZ \subset \RR^4. \]
Note that $\mathrm{rk}\Lambda^j_{\mathcal{B}^l_0}=1+j$. We define a weak stability condition $\sigma_{\mathrm{slope}}=(Z_{\mathrm{slope}},\mathrm{Coh}(\PP^3,\mathcal{B}^l_0))$ on $D^b(\PP^3,\mathcal{B}^l_0)$ with respect to $\Lambda^j_{\mathcal{B}^l_0}$ as 
\[ Z_{\mathrm{slope}}(E):=i \ch_{\mathcal{B}^l_0,0}(E)-\ch_{\mathcal{B}^l_0,1}(E) \]
for $E \in D^b(\PP^3,\B^l_0)$. It is nothing but the slope stability. As Definition \ref{tilt}, for $\beta \in \RR$, consider the heart  $\mathrm{Coh}^\beta(\PP^3,\B^l_0)$ of a bounded t-structure in $D^b(\PP^3,\B^l_0)$ defined by 
\[\mathrm{Coh}^\beta(\PP^3,\B^l_0) :=\mathrm{Coh}(\PP^3,\B^l_0)^{\beta}_{\sigma_{\mathrm{slope}}}.\] For $\alpha>0$ and $\beta \in \RR$, we can define a weak stability condition $\sigma_{\alpha, \beta}=(Z_{\alpha,\beta}, \mathrm{Coh}^\beta(\PP^3,\B^l_0))$ with respect to $\Lambda^2_{\mathcal{B}^l_0}$ by 
\[ Z_{\alpha, \beta}(E):=i\chbi(E)+\frac{1}{2}\alpha^2\chbo(E)-\chbii(E)\]
for $E \in D^b(\PP^3,\B^l_0)$. (See \cite{BLMS}, Proposition 9.3.)
Consider the heart $\mathrm{Coh}^0_{\alpha,\beta}(\PP^3,\B^l_0)$ of a bounded t-structure on $D^b(\PP^3,\B^l_0)$, that is defined by 
\[\mathrm{Coh}^0_{\alpha,\beta}(\PP^3,\B^l_0):=\mathrm{Coh}^\beta(\PP^3,\B^l_0))^0_{\sigma_{\alpha,\beta}}.\]
Let $\widetilde{\Psi_l}: \D_X \iso \Psi(\mathbf{L}p_l^*\D_X)$ be the equivalence induced by the fully faithful functor $\Psi_l \circ \mathbf{L}p_l^* :\D_X \to D^b(\PP^3, \B^l_0)$.

\begin{thm}[\cite{BLMS}, Proposition 2.6 in \cite{LPZ}]\label{BLMS stability}
For $0<\alpha<1/4$, we define 
\[ \A^l_\alpha:=\widetilde{\Psi_{l}}^{-1}(\mathrm{Coh}^0_{\alpha,-1}(\PP^3,\B^l_0) \cap \widetilde{\Psi_l}(\D_X)), \]
\[Z^l_\alpha(E):=-iZ_{\alpha,-1}(\widetilde{\Psi_l}(E)), E \in \D_X. \]
Then $\sigma^l_\alpha=(Z^l_\alpha, \A^l_\alpha)$ is a stability condition on $\D_X$ with respect to $\widetilde{H}^{1,1}(\D_X, \ZZ)$. For any lines $l$ and $l'$ on $X$,  we have $\sigma^l_\alpha=\sigma^{l'}_\alpha$. Denote the stability condition $\sigma^l_\alpha$ on $\D_X$ by $\sigma_\alpha=(Z_\alpha, \A_\alpha)$.
\end{thm}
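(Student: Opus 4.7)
The plan is to apply the general BLMS criterion for inducing a (genuine) stability condition on an admissible subcategory from a weak stability condition on the ambient category. The ambient category here is $D^b(\PP^3,\B^l_0)$ equipped with the weak stability condition $\sigma_{\alpha,-1}$ whose heart is $\mathrm{Coh}^0_{\alpha,-1}(\PP^3,\B^l_0)$, and the admissible subcategory is $\widetilde{\Psi_l}(\D_X)\subset D^b(\PP^3,\B^l_0)$, whose right orthogonal is spanned by $\B^l_1,\B^l_2,\B^l_3$ according to the semi-orthogonal decomposition recalled in Subsection 3.4.

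First, I would verify the hypotheses of the induction criterion at $\beta=-1$: each exceptional generator $\B^l_i$ (for $i=1,2,3$), up to a suitable cohomological shift, should lie in $\mathrm{Coh}^0_{\alpha,-1}(\PP^3,\B^l_0)$, and its image under the Serre functor of $D^b(\PP^3,\B^l_0)$ should lie in a single further shift of the heart. This reduces to explicit phase computations: using $\chb=e^{-\beta h}\ch_{\B^l_0}$ with the correction $\ch_{\B^l_0}(E)=\ch(\mathrm{forg}(E))\bigl(1-\tfrac{11}{32}h^2\bigr)$, I would compute $\chbo,\chbi,\chbii$ for each $\B^l_i$ at $\beta=-1$ and translate the heart- and Serre-dual-conditions into explicit inequalities in $\alpha$. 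A careful check singles out the range $0<\alpha<1/4$ as precisely where all required inequalities hold simultaneously. The support property for the induced stability condition follows from the support property of $\sigma_{\alpha,-1}$ by restricting the quadratic form to the image of the lattice of $\D_X$.

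The central charge $Z^l_\alpha$ is then obtained by composing $Z_{\alpha,-1}$ with $\widetilde{\Psi_l}$ and multiplying by $-i$, matching the formula in the statement; the induced heart $\A^l_\alpha$ is defined as the preimage of $\mathrm{Coh}^0_{\alpha,-1}\cap\widetilde{\Psi_l}(\D_X)$, which by the criterion is genuinely the heart of a bounded $t$-structure on $\D_X$. Nonvanishing of $Z^l_\alpha$ on nonzero objects of $\A^l_\alpha$ is part of what the criterion guarantees once the phase conditions hold. For the independence on the line $l$, the idea is to show that $Z^l_\alpha(E)$ depends on $E\in\D_X$ only through its Mukai vector $v(E)\in\widetilde{H}^{1,1}(\D_X,\ZZ)$: using the resolution of $\E_l$ recalled in Remark \ref{exact}, a Grothendieck--Riemann--Roch computation expresses $\ch_{\B^l_0}(\widetilde{\Psi_l}(E))$ in terms of intrinsic invariants of $E\in\D_X$, showing that $Z^l_\alpha=Z^{l'}_\alpha$ as homomorphisms on $\widetilde{H}^{1,1}(\D_X,\ZZ)$. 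Once the central charges coincide, the hearts $\A^l_\alpha$ and $\A^{l'}_\alpha$ must also coincide, either by repeating the same tilting procedure intrinsically on $\D_X$ or by a continuity argument inside $\Stab(\D_X)$ using that stability conditions with identical central charge and in the same connected component agree.

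The main obstacle is the second half of step one: the phase inequalities for the $\B^l_i$ at $\beta=-1$ involve the forgetful Chern character together with the unusual correction factor $1-\tfrac{11}{32}h^2$ coming from the Clifford algebra, and one must track signs carefully to isolate exactly the interval $0<\alpha<1/4$. The second subtle point is the line-independence: although the central charge ultimately factors through $v(E)$, the intermediate conic bundle $\pi_l$ and sheaf $\E_l$ depend delicately on $l$, so the Grothendieck--Riemann--Roch bookkeeping via Remark \ref{exact} is essential to confirm that all $l$-dependent contributions cancel.
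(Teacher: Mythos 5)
The paper states this theorem without proof, citing \cite{BLMS} and Proposition 2.6 of \cite{LPZ}, and your overall strategy --- inducing a stability condition on the admissible component from the weak stability condition $\sigma_{\alpha,-1}$ on $D^b(\PP^3,\B^l_0)$ by checking that the exceptional objects $\B^l_i$ and their Serre duals land in the prescribed shifts of $\mathrm{Coh}^0_{\alpha,-1}(\PP^3,\B^l_0)$ --- is exactly the route taken in those sources, and the phase computations you describe do isolate $0<\alpha<1/4$. However, your treatment of the support property would not prove the statement as written. Restricting the quadratic form from $\Lambda^2_{\B^l_0}\otimes\RR$ (which has rank $3$) to the image of $K_0(\D_X)$ only yields the support property with respect to the rank-two lattice spanned by $\lambda_1,\lambda_2$, since $Z^l_\alpha$ factors through that quotient. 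The theorem asserts a stability condition with respect to the full lattice $\widetilde{H}^{1,1}(\D_X,\ZZ)$, of rank $\rho(\D_X)+2$; for that one must produce a quadratic form on all of $\widetilde{H}^{1,1}(\D_X,\ZZ)\otimes\RR$ which is negative definite on the (large) kernel of $Z^l_\alpha$ and nonnegative on classes of semistable objects. This is a genuinely separate argument, carried out in the appendix to \cite{BLMS} (and refined in \cite{BLMNPS}) using the Mukai pairing; it cannot be obtained by restriction from $\PP^3$. Without it, Proposition \ref{A2}, Lemma \ref{generic} and the deformation arguments of Section 7, which all take place in $\Stab^*(\D_X)$ defined relative to $\widetilde{H}^{1,1}(\D_X,\ZZ)$, are unavailable.

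The second gap is in the independence of $l$. Your fallback principle that ``stability conditions with identical central charge and in the same connected component agree'' is false in general: the map $\Stab(\D_X)\to\Hom(\widetilde{H}^{1,1}(\D_X,\ZZ),\CC)$ is only a local homeomorphism, indeed a covering onto $\P^+_0(\D_X)$ with possibly nontrivial deck group (compare Theorem \ref{BriK3} in the K3 case). The argument of \cite{LPZ} is that the Fano variety of lines $F(X)$ is connected and that $l\mapsto\sigma^l_\alpha$ is a continuous family whose central charge is constant; a continuous path whose image under a local homeomorphism is constant must itself be constant. So what you need to establish is continuity of the whole stability condition in $l$ --- not merely equality of the central charges computed via Grothendieck--Riemann--Roch --- in order to conclude $\A^l_\alpha=\A^{l'}_\alpha$.
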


\subsection{Spaces of stability conditions on Kuznetsov components}
In this subsection, we recall structures of spaces of stability conditions on Kuznetsov components.

Let $X$ be a cubic fourfold. Let $\Stab(\D_X)$ be the space of stability conditions on $\D_X$ with respect to $\widetilde{H}^{1,1}(\D_X, \ZZ)$. We define the subset $\P(\D_X)$ of $\widetilde{H}^{1,1}(\D_X, \ZZ)\otimes \CC$  as 
\[\P(\D_X):=\{\Omega \in \widetilde{H}^{1,1}(\D_X, \ZZ)\otimes \CC \mid \text{$\langle \mathrm{Re}(\Omega), \mathrm{Im}(\Omega) \rangle_{\mathbb{R}}$ is a positive definite plane} \}.\] 
Since the Mukai pairing $(-,-)$ on $H^*(\D_X,\ZZ)$ is non-degenerate, for a stability condition $\sigma=(Z,\A) \in \mathrm{Stab}(\D_X)$, there is the unique element $\Omega_Z \in \widetilde{H}^{1,1}(\D_X, \ZZ)\otimes \CC$ such that $Z(-)=(\Omega_Z,-)$.  Let $\P^+(\D_X)$ be the connected component of $\P(\D_X)$ containing $\Omega_{Z_\alpha}$ for $0<\alpha <1/4$. Here, $\sigma_\alpha$ is the stability condition in Theorem \ref{BLMS stability}. Let $\Delta_X$ be the set of $(-2)$-classes in $\widetilde{H}^{1,1}(\D_X, \ZZ)$. We define the subset $\P^+_0(\D_X)$ of $\P^+(\D_X)$ as 
\[\P^+_0(\D_X):=\P^+(\D_X)\setminus \bigcup_{\delta \in \Delta_X} \delta^\perp.\]
As the case of K3 surfaces, we consider the action of the autoequivalence group of $\D_X$ on $\Stab(\D_X)$. To consider the action, we need the following.

\begin{dfn}\label{FM-type}
An autoequivalence $\Phi:\D_X \to \D_X$ is called Fourier-Mukai type if there exists $\mathcal{E} \in D^b(X \times X)$ such that the following diagram commutes.
\[\xymatrix{D^b(X) \ar[d]_{i^*} \ar[r]^{\Phi_{\mathcal{E}}} & D^b(X) \\
\D_X \ar[r]^{\Phi} & \D_X \ar[u]_{i}}  \]
Here, $\Phi_{\mathcal{E}}$ is the Fourier-Mukai functor with the Fourier-Mukai kernel $\mathcal{E}$.
We denote the group of autoequivalences of Fourier-Mukai type by $\Aut^{\FM}(\D_X)$. For $\Phi \in \Aut^{\FM}(\D_X)$, $\Phi^H: H^*(\D_X, \ZZ) \to H^*(\D_X,\ZZ)$ is the Hodge isometry induced by $\Phi$.
\end{dfn}
We will consider only autoequivalences of Fourier-Mukai type.

\begin{dfn}
For an autoequivalence $\Phi \in \Aut^{\mathrm{FM}}(\D_X)$ and a stability condition $\sigma=(Z,\A) \in \Stab(\D_X)$, we define the stability condition $\Phi\sigma:=(Z \circ (\Phi^H)^{-1}, \Phi(\A)) \in \Stab(\D_X)$. 
\end{dfn}

Let $\mathrm{Stab}^*(\D_X) \subset \Stab(\D_X)$ be the connected component containing stability conditions in Theorem \ref{BLMS stability}. Then the following holds. 
\begin{thm}[\cite{BLMS}, \cite{BLMNPS}]
For a stability condition $\sigma=(Z,\A) \in \Stab^*(\D_X)$, put $\pi_X(\sigma):=\Omega_Z$.
Then $\pi_X$ induces the covering map $\pi_X: \Stab^*(\D_X) \to \P^+_0(\D_X)$.
\end{thm}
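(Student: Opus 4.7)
The plan is to mimic Bridgeland's proof of Theorem \ref{BriK3} in the K3 setting, replacing $D^b(S)$ with $\D_X$ throughout and using the BLMS stability $\sigma_\alpha$ of Theorem \ref{BLMS stability} as the base point. The central input is Bridgeland's general deformation theorem: for any triangulated category with a stability condition satisfying the support property, the forgetful map $\Stab(\D_X) \to \mathrm{Hom}(\widetilde{H}^{1,1}(\D_X,\ZZ),\CC)$, $(Z,\A) \mapsto Z$, is a local homeomorphism onto an open subset. Since the Mukai pairing on $\widetilde{H}^{1,1}(\D_X,\ZZ)$ is non-degenerate, we may identify $\mathrm{Hom}(\widetilde{H}^{1,1}(\D_X,\ZZ),\CC)$ with $\widetilde{H}^{1,1}(\D_X,\ZZ)\otimes\CC$ via $Z \leftrightarrow \Omega_Z$, and $\pi_X$ becomes a local homeomorphism onto an open subset of $\widetilde{H}^{1,1}(\D_X,\ZZ)\otimes\CC$.

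First I would verify that $\pi_X(\Stab^*(\D_X)) \subset \P^+_0(\D_X)$. Positivity of the real $2$-plane $\langle \mathrm{Re}(\Omega_Z), \mathrm{Im}(\Omega_Z) \rangle_\RR$ follows from a signature argument: the support property provides a quadratic form $Q$ with $Q|_{\mathrm{Ker}\, Z}$ negative definite and $Q \geq 0$ on semistable classes, forcing the $2$-dimensional complement spanned by $\mathrm{Re}(\Omega_Z)$ and $\mathrm{Im}(\Omega_Z)$ to be positive definite with respect to the Mukai pairing (signature $(4,20)$). Choice of the connected component $\P^+(\D_X)$ containing $\Omega_{Z_\alpha}$ is preserved along $\Stab^*(\D_X)$ by continuity and connectedness. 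For avoidance of $\delta^\perp$ with $\delta \in \Delta_X$, I would note that every $(-2)$-class $\delta \in \widetilde{H}^{1,1}(\D_X,\ZZ)$ is the Mukai vector of a $\sigma$-semistable spherical object $E$ (using the $2$-Calabi--Yau structure of $\D_X$ and the Hirzebruch--Riemann--Roch formula $\chi(E,E) = -(v(E),v(E)) = 2$); then $Z(\delta) = 0$ would make $E$ destabilizing in an impossible way, contradicting the axioms of a stability condition.

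Next I would promote the local homeomorphism to a covering. Following Bridgeland, the image is open and it suffices to show that every path $\gamma:[0,1] \to \P^+_0(\D_X)$ based at $\pi_X(\sigma)$ lifts to a path in $\Stab^*(\D_X)$. By the deformation theorem one can lift $\gamma$ on a maximal half-open interval $[0,t_0)$; if $t_0<1$, the only obstruction to extending past $t_0$ is the failure of the support property in the limit, which by the structure of walls forces a spherical class $\delta$ with $Z_{t_0}(\delta) = 0$, hence $\gamma(t_0) \in \delta^\perp$, contradicting $\gamma(t_0) \in \P^+_0(\D_X)$. This proves $\pi_X$ is a covering of its image. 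Surjectivity onto $\P^+_0(\D_X)$ then follows because the image is both open and closed in the connected manifold $\P^+_0(\D_X)$.

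The hard part will be the step that identifies the set of walls with exactly $\bigcup_{\delta \in \Delta_X}\delta^\perp$: one must show that no other degeneration of stability conditions can occur, i.e.\ that any ``limiting'' class responsible for the loss of the support property necessarily has square $-2$. In the K3 case this is achieved by Bridgeland's classification of semirigid objects; in the present setting this is carried out through the inducing construction of \cite{BLMS}, which produces $\sigma_\alpha$ from weak stability conditions on $D^b(\PP^3,\mathcal{B}^l_0)$, together with the wall-and-chamber analysis of \cite{BLMNPS} that parallels the K3 picture by exploiting the $2$-Calabi--Yau property of $\D_X$ and the Mukai-lattice structure of Definition \ref{Mukai lattice}.
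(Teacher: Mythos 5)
The paper itself gives no proof of this statement: it is quoted directly from \cite{BLMS} and \cite{BLMNPS}, so there is no in-paper argument to compare yours against. Your outline does follow the strategy of those references (Bridgeland's K3 argument transplanted to $\D_X$), but as written it is a roadmap rather than a proof, and the two points you pass over quickly are exactly where all the content is concentrated. First, to get positivity of the plane $\langle \mathrm{Re}\,\Omega_Z, \mathrm{Im}\,\Omega_Z\rangle_\RR$ \emph{with respect to the Mukai pairing} you need the support property to hold for the Mukai pairing itself restricted to $\widetilde{H}^{1,1}(\D_X,\ZZ)$ (which has signature $(2,\rho(\D_X))$, not $(4,20)$ --- that is the full Mukai lattice, whereas stability conditions here are taken with respect to the algebraic part). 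The axiom only supplies \emph{some} quadratic form $Q$; upgrading this to the Mukai form on the whole component is a separate verification at the base point $\sigma_\alpha$ followed by a deformation argument, and it is done in \cite{BLMS}, not deducible from the axioms alone. Second, and more seriously, the claim that every $(-2)$-class $\delta$ is the Mukai vector of a $\sigma$-semistable object is not a formal consequence of the $2$-Calabi--Yau property plus Riemann--Roch: it is the non-emptiness of moduli spaces of $\sigma$-stable objects in $\D_X$ in every class, which is one of the main theorems of \cite{BLMNPS} and is proved there by a family/degeneration argument. Without it, neither the inclusion $\pi_X(\Stab^*(\D_X))\subset\P^+_0(\D_X)$ nor the identification of the degeneration locus with $\bigcup_{\delta\in\Delta_X}\delta^\perp$ is established; this is the same step you yourself label ``the hard part'' and then defer. (A small correction on the easy half: once you do have a nonzero $\sigma$-semistable $E$ with $v(E)=\delta$, the contradiction with $\Omega_Z\in\delta^\perp$ is simply that a semistable object of a given phase has nonzero central charge; no destabilization argument is needed.)

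In short: the skeleton is right and consistent with how \cite{BLMS} and \cite{BLMNPS} actually proceed, but the theorem cannot be re-derived at the level of detail you give; like the paper, you are ultimately quoting the references for the substantive steps, and it would be cleaner to say so explicitly rather than to present the existence of semistable spherical objects as if it followed from general principles.
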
 

\section{Autoequivalences of K3 surfaces and stability conditions}
In this section, we introduce certain subgroups of autoequivalence groups of K3 surfaces related to stability conditions following \cite{Huy}. We will study polarized automorphisms of K3 surfaces and their relation with stability conditions.

\subsection{Subgroups of autoequivalence groups of K3 surfaces}
In this subsection, we see the definition of groups that we are interested in. 

Let $S$ be a K3 surface. First, we recall the notion of symplectic automorphism group $\Aut_{\mathrm{s}}(S)$ of $S$ and the symplectic autoequivalence group $\Aut_{\mathrm{s}}(D^b(S))$. 

\begin{dfn}
An automorphism $f \in \Aut(S)$ of $S$ is a symplectic automorphism of $S$ if the pullback $f^*$ of $f$ acts on $H^{2,0}(S)$ trivially.
The group $\Aut_{\mathrm{s}}(S)$ of symplectic automorphisms of $S$ is called the symplectic automorphism group of $S$. An autoequivalence $\Phi \in \Aut(D^b(S))$ is a symplectic autoequivalence of $D^b(S)$ if the cohomological Fourier-Mukai transform $\Phi^H$ acts on $\widetilde{H}^{2,0}(S)$ trivially. The group $\Aut_{\mathrm{s}}(D^b(S))$ of symplectic autoequivalences of $D^b(S)$ is called the symplectic autoequivalence group of $D^b(S)$. 
\end{dfn}

 Huybrechts \cite{Huy} studied the following subgroups of $D^b(S)$.

\begin{dfn}\label{stabilizer K3}
For a stability condition $\sigma \in \Stab^*(S)$, we define the group $\Aut(D^b(S),\sigma)$ of autoequivalences fixing $\sigma$ by
\[\Aut(D^b(S),\sigma):=\{\Phi \in \Aut(D^b(S)) \mid \Phi\sigma=\sigma\}.\]
Denote the intersection of $\Aut(D^b(S),\sigma)$ and $\Aut_{\mathrm{s}}(D^b(S))$ by $\Aut_{\mathrm{s}}(D^b(S),\sigma)$.
\end{dfn}

The groups in Definition \ref{stabilizer K3} can be described in terms of Mukai latices.

\begin{dfn}\label{stabilizer Hodge K3}
For a linear subspace $W \subset H^*(S,\RR)$, we define 
\[\mathrm{O}_{\mathrm{Hodge}}(H^*(S,\ZZ),W):=\{\varphi \in \mathrm{O}_{\mathrm{Hodge}}(H^*(S,\ZZ)) \mid \varphi |_W=\mathrm{id}_W\}.\]
The weight two Hodge structure on $H^*(S,\ZZ)$ defines the positive definite plane $P_S$ given by 
\[P_S:=\bigl(\widetilde{H}^{2,0}(S)\oplus\widetilde{H}^{0,2}(S)\bigr)\cap H^*(S,\RR).  \]
For a stability condition $\sigma=(Z,\A) \in \Stab^*(S)$, we have the positive definite plane $P_\sigma$ defined by
\[P_\sigma:=\langle \mathrm{Re}\bigl(\pi_S(\sigma)\bigl), \mathrm{Im}\bigl(\pi_S(\sigma)\bigr) \rangle_{\mathbb{R}}.\] 
For a stability condition $\sigma \in \Stab^*(S)$, we put
\[ \Pi_\sigma:=P_S \oplus P_\sigma.\] 

\end{dfn}

Huybrechts proved the following proposition. 

\begin{prop}[\cite{Huy}]\label{symmetry lattice}
Let $\sigma \in \Stab^*(S)$ be a stability condition on $D^b(S)$. 
Then we have the isomorphism 
\[(-)^H:\Aut(D^b(S),\sigma) \iso \mathrm{O}_{\mathrm{Hodge}}(H^*(S,\ZZ),P_\sigma), \Phi \mapsto \Phi^H \]
of groups $\Aut(D^b(S),\sigma)$ and $\mathrm{O}_{\mathrm{Hodge}}(H^*(S,\ZZ),P_\sigma)$. The restriction of this isomorphism to $\Aut_{\mathrm{s}}(D^b(S),\sigma)$ induces the isomorphism
\[(-)^H: \Aut_{\mathrm{s}}(D^b(S),\sigma) \iso \mathrm{O}_{\mathrm{Hodge}}(H^*(S,\ZZ),\Pi_\sigma) \]
of groups $\Aut_{\mathrm{s}}(D^b(S),\sigma)$ and $\mathrm{O}_{\mathrm{Hodge}}(H^*(S,\ZZ),\Pi_\sigma)$. 
\end{prop}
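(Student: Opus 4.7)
The plan is to factor the map $(-)^H$ through the Derived Torelli Theorem and then use Theorem \ref{BriK3} to kill the remaining ambiguity. Recall that by Mukai--Orlov and the orientation result of Huybrechts--Macr\`i--Stellari, the image of $(-)^H:\Aut(D^b(S)) \to \mathrm{O}_{\mathrm{Hodge}}(H^*(S,\ZZ))$ consists of precisely the Hodge isometries which preserve the natural orientation of the positive four-space, and its kernel is $\Aut^0(D^b(S))$, identified with $\mathrm{Deck}(\pi_S)$ by Theorem \ref{BriK3}.

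\textbf{Well-definedness.} If $\Phi \in \Aut(D^b(S),\sigma)$, then $Z \circ (\Phi^H)^{-1} = Z$, so $\Phi^H$ fixes $\Omega_Z = \pi_S(\sigma) \in \widetilde{H}^{1,1}(S,\ZZ)\otimes\CC$ and hence fixes $P_\sigma = \langle \mathrm{Re}\,\Omega_Z, \mathrm{Im}\,\Omega_Z\rangle_\RR$ pointwise.

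\textbf{Injectivity.} Suppose $\Phi \in \Aut(D^b(S),\sigma)$ has $\Phi^H = \mathrm{id}$. Then $\Phi \in \Aut^0(D^b(S))$ and $\Phi(\Stab^*(S)) \subset \Stab^*(S)$ (since $\sigma \in \Stab^*(S)$ is fixed), so by Theorem \ref{BriK3}, $\Phi$ corresponds to a deck transformation of the covering $\pi_S$ which fixes the point $\sigma$. Deck transformations of a covering act freely, hence $\Phi = \mathrm{id}$.

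\textbf{Surjectivity.} Let $\varphi \in \mathrm{O}_{\mathrm{Hodge}}(H^*(S,\ZZ), P_\sigma)$. Since $\varphi$ is a Hodge isometry, it preserves $\widetilde{H}^{2,0}(S) = \CC\omega_S$, so $\varphi(\omega_S)=\lambda\omega_S$ with $|\lambda|=1$; on the real plane $P_S$ this is a rotation, hence orientation-preserving. Combined with $\varphi|_{P_\sigma} = \mathrm{id}_{P_\sigma}$, we obtain that $\varphi$ preserves orientation on the positive four-space $P_S \oplus P_\sigma$. By the Derived Torelli Theorem, $\varphi$ lifts to some $\Psi \in \Aut(D^b(S))$ with $\Psi^H = \varphi$. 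Then $\pi_S(\Psi\sigma) = \varphi(\pi_S(\sigma)) = \pi_S(\sigma)$ and $\Psi\sigma$ lies in the same component $\Stab^*(S)$ (via continuity / path-lifting), so there exists a deck transformation $\Theta \in \Aut^0(D^b(S))$ with $\Theta(\Psi\sigma) = \sigma$. Setting $\Phi := \Theta \circ \Psi$ gives $\Phi^H = \varphi$ and $\Phi\sigma = \sigma$.

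\textbf{Symplectic restriction.} An autoequivalence $\Phi \in \Aut(D^b(S),\sigma)$ is symplectic iff $\Phi^H$ acts trivially on $\widetilde{H}^{2,0}(S)$, which (being a real isometry and an $\RR$-linear extension) is equivalent to $\Phi^H|_{P_S} = \mathrm{id}_{P_S}$. Combining with $\Phi^H|_{P_\sigma} = \mathrm{id}_{P_\sigma}$, this is exactly $\Phi^H|_{\Pi_\sigma} = \mathrm{id}_{\Pi_\sigma}$, which is the condition cutting out $\mathrm{O}_{\mathrm{Hodge}}(H^*(S,\ZZ), \Pi_\sigma)$ inside $\mathrm{O}_{\mathrm{Hodge}}(H^*(S,\ZZ), P_\sigma)$, so the second isomorphism follows from the first.

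The main obstacle is the surjectivity step: one must invoke the full strength of the Derived Torelli Theorem (including the orientation criterion of Huybrechts--Macr\`i--Stellari) and then verify that $\Psi\sigma$ lands in $\Stab^*(S)$ so that a deck transformation is available to translate it to $\sigma$.
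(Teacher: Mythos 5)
The paper itself gives no proof of this proposition --- it is quoted from \cite{Huy} --- so your argument has to be judged against the standard one, which it essentially reconstructs. Most of it is fine: fixing $\sigma$ forces $\Phi^H(\Omega_Z)=\Omega_Z$ by non-degeneracy of the Mukai pairing on $\widetilde{H}^{1,1}(S,\ZZ)$, hence $\Phi^H|_{P_\sigma}=\mathrm{id}$; an autoequivalence with $\Phi^H=\mathrm{id}$ that fixes a point of $\Stab^*(S)$ does preserve that component, so lies in $\Aut^0(D^b(S))$ and corresponds under Theorem \ref{BriK3} to a deck transformation of the connected covering $\pi_S$ with a fixed point, hence is trivial; and acting trivially on $\widetilde{H}^{2,0}(S)$ is indeed equivalent to acting trivially on the real plane $P_S$, which gives the symplectic refinement.

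The genuine gap is in surjectivity, precisely at the point you yourself flag as the main obstacle and then dismiss with ``(via continuity / path-lifting)''. Nothing in your argument shows that $\Psi\sigma$ lies in $\Stab^*(S)$: the assertion that an arbitrary autoequivalence preserves the distinguished component is Bridgeland's conjecture and is open in general, so it cannot be invoked for an unspecified lift $\Psi$ of $\varphi$. Without $\Psi\sigma\in\Stab^*(S)$ there is no deck transformation $\Theta$ available to correct $\Psi$ (and even granting it, moving $\Psi\sigma$ to $\sigma$ within a fibre uses that $\pi_S$ is a normal covering, i.e.\ that $\Aut^0(D^b(S))$ acts transitively on fibres --- this is part of Bridgeland's theorem but not of Theorem \ref{BriK3} as recorded in this paper). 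The standard repair, and the one used in \cite{Huy}, is to choose the lift carefully: the proofs of surjectivity of $\Aut(D^b(S))$ onto the orientation-preserving Hodge isometries (Hosono--Lian--Oguiso--Yau, Ploog) realize $\varphi$ as a composition of explicit equivalences --- push-forwards by automorphisms, tensoring by line bundles, shifts, Fourier--Mukai transforms along universal families of stable sheaves, and (squares of) spherical twists --- each of which is known to map $\Stab^*(S)$ into itself (for the universal-family transforms this is exactly the argument of Proposition \ref{dist connected} of this paper). With such a choice of $\Psi$ the rest of your surjectivity step goes through verbatim. You should either make this choice explicit or cite the precise statement of \cite{Huy} that packages it.
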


\subsection{Polarized automorphisms of K3 surfaces}
In this subsection, we study stability conditions on K3 surfaces fixed by polarized automorphisms. 
First, we define polarized automorphisms of K3 surfaces.
\begin{dfn}
For a polarized K3 surface $(S,h)$, we define the group $\Aut(S,h)$ of automorphisms of $(S,h)$ by
\[\Aut(S,h):=\{f \in \Aut(S) \mid f^*h=h \}.\]
An automorphism $f \in \Aut(S)$ of a K3 surface $S$ is called a polarized automorphism of $S$ if there is a primitive ample divisor $h \in \mathrm{NS}(S)$ such that $f \in \Aut(S,h)$.
Denote the intersection of $\Aut(S,h)$ and  $\Aut_{\mathrm{s}}(S)$ by $\Aut_{\mathrm{s}}(S,h)$.
\end{dfn} 

Let $(S,h)$ be a polarized K3 surface. Using the polarization $h$, we consider the following stability conditions on $D^b(S)$.

\begin{dfn}\label{polarized stability}
Take real numbers $\alpha,\beta \in \RR$ such that $\alpha>0$ and $e^{\beta h+i\alpha h} \in \P^+_0(S)$. We define the stability condition $\sigma_{\alpha,\beta}=(Z_{\alpha.\beta}, \A_{\alpha,\beta})$ by $\sigma_{\alpha, \beta}:=\sigma_{\beta h, \alpha h}$ in Theorem \ref{stability K3}.
\end{dfn}

The stability conditions in Definition \ref{polarized stability} is fixed by polarized automorphisms in $\Aut(S,h)$.

\begin{prop}\label{fix K3}
Take real numbers $\alpha,\beta \in \RR$ such that $\alpha>0$ and $e^{\beta h+i\alpha h} \in \P^+_0(S)$.
For any automorphism $f \in \Aut(S,h)$, we have $f_*\sigma_{\alpha,\beta}=\sigma_{\alpha,\beta}$.
In particular, we have the natural inclusions 
\[\Aut(S,h) \hookrightarrow \Aut(D^b(S),\sigma_{\alpha,\beta}),  \]
and
\[\Aut_{\mathrm{s}}(S,h) \hookrightarrow \Aut_{\mathrm{s}}(D^b(S),\sigma_{\alpha,\beta}).\]
\end{prop}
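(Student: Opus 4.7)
The plan is to directly verify that $\sigma_{\alpha,\beta} = (Z_{\alpha,\beta}, \A_{\alpha,\beta})$ is fixed by the autoequivalence $f_*$, i.e.\ that (a) the central charge satisfies $Z_{\alpha,\beta} \circ f_*^H = Z_{\alpha,\beta}$, and (b) the heart satisfies $f_*(\A_{\alpha,\beta}) = \A_{\alpha,\beta}$. Once this is done, the existence of the two inclusions is essentially formal: the map $f \mapsto f_*$ is an injective group homomorphism $\Aut(S) \hookrightarrow \Aut(D^b(S))$ (distinct automorphisms move distinct skyscraper sheaves), and $f$ is symplectic iff $f_*^H$ acts trivially on $\widetilde{H}^{2,0}(S) = H^{2,0}(S)$, so both restrictions are immediate.

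For the invariance of the central charge, the key observation is that for an automorphism $f \in \Aut(S)$, the induced map $f_*$ on $H^*(S,\ZZ)$ equals $(f^{-1})^*$ and is therefore a ring automorphism; moreover $f_*\sqrt{\mathrm{td}(S)} = \sqrt{\mathrm{td}(S)}$, so $f_*^H(v(E)) = v(f_*E) = f_*(v(E))$, and $f_*^H$ agrees with $f_*$ as a self-map of $H^*(S,\ZZ)$. Since $f^*h = h$ gives $f_*h = h$, we get
\[
f_*(e^{\beta h + i\alpha h}) \;=\; e^{f_*(\beta h + i\alpha h)} \;=\; e^{\beta h + i\alpha h}.
\]
Combining this with the fact that $f_*^H$ is an isometry of the Mukai pairing, I obtain
\[
Z_{\alpha,\beta}(f_*^H w) \;=\; (e^{\beta h + i\alpha h}, f_*^H w) \;=\; ((f_*^H)^{-1}e^{\beta h + i\alpha h}, w) \;=\; Z_{\alpha,\beta}(w).
\]

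For the invariance of the heart, recall that $\A_{\alpha,\beta} = \mathrm{Coh}(S)^{\beta\alpha h^2}_{\sigma_{\alpha h}}$ is the tilt of $\mathrm{Coh}(S)$ at slope $\beta h \cdot \alpha h$ with respect to $\omega$-slope stability for $\omega = \alpha h$. Now $f_*$ preserves $\mathrm{Coh}(S) \subset D^b(S)$, and for any coherent sheaf $E$ the projection formula together with $f^*\omega = \omega$ yields
\[
\ch_1(f_*E)\cdot\omega \;=\; f_*(\ch_1 E)\cdot\omega \;=\; \ch_1(E)\cdot f^*\omega \;=\; \ch_1(E)\cdot\omega,
\]
while $\ch_0(f_*E) = \ch_0(E)$. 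Thus $\mu_{\alpha h}(f_*E) = \mu_{\alpha h}(E)$, so $f_*$ preserves $\omega$-semistability and the torsion pair $(\T^{\beta\alpha h^2}_{\sigma_{\alpha h}},\F^{\beta\alpha h^2}_{\sigma_{\alpha h}})$, hence also its tilt $\A_{\alpha,\beta}$. Together with (a) this gives $f_*\sigma_{\alpha,\beta} = \sigma_{\alpha,\beta}$.

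There is no serious obstacle here; the argument is routine once one unwinds the definitions. The only point that deserves a careful sentence is the identification of $f_*^H$ on the Mukai lattice with the ordinary pushforward $f_*$ on $H^*(S,\ZZ)$, since the definition in the Notation section computes cohomological Fourier--Mukai transforms through the Mukai vector $v_{S\times S}$ of the structure sheaf of the graph of $f$; this reduces to the statement that automorphisms preserve the Todd class and that $f_*$ is a ring map on cohomology.
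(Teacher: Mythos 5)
Your proof is correct and follows essentially the same route as the paper: check that $f_*^H$ fixes $e^{\beta h+i\alpha h}$ (hence the central charge) and that $f_*$ preserves the slope-stability torsion pair (hence the tilted heart). You spell out the slope computation and the identification of $f_*^H$ with $f_*$ in slightly more detail than the paper does, but the argument is the same.
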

\begin{proof}
Let $f \in \Aut(S,h)$ be a polarized automorphism of $S$. Note that $f_*h=h$.
Recall that $Z_{\alpha,\beta}(v)=(e^{\beta h+i\alpha h},v)$ for an object $v \in \widetilde{H}^{1,1}(S,\ZZ)$. We have 
\begin{eqnarray*}
Z_{\alpha, \beta}(f^*(v))
&=& (e^{\alpha h + i\beta h}, f^*(v))\\
&=& (f_*(e^{\alpha h+i \beta h}),v)\\
&=&  (e^{\alpha h+i \beta h}, v)\\
&=& Z_{\alpha,\beta}(v). 
\end{eqnarray*}
Moreover,  $f_*$ preserves the torsion pair on the abelian category $\mathrm{Coh}(S)$ in Definition \ref{tilt} and Theorem \ref{stability K3}. So $f_*$ preserves the heart $\A_{\alpha, \beta}$ of the bounded t-structure on $D^b(S)$.
\end{proof}

\section{Automorphisms of cubic fourfolds and stability conditons}
Section 5 is almost parallel to Section 4. In this section, we introduce certain subgroups of autoequivalence groups of Kuznetsov components of cubic fourfolds related to stability conditions. We will study automorphisms of cubic fourfolds and their relation with stability conditions.

\subsection{Subgroups of autoequivalence groups of Kuznetsov components}
In this subsection, we introduce subgroups of autoequivalence groups of Kuznetsov components as Subsection 4.1.

Let $X$ be a cubic fourfold. First, we compare the automorphism group $\Aut(X)$ with the autoequivalence group $\Aut(\D_X)$ of $\D_X$.
For an automorphism $f \in \Aut(X)$, we have the autoequivalence $f_* \in \Aut^{\mathrm{FM}}(\D_X)$ since $f_*\OO_X(k)$ for any integer $k \in \ZZ$. So we obtain the group homomorphism $\rho_1: \Aut(X) \to \Aut^{\mathrm{FM}}(\D_X)$.　
Note that the admissible subcategory $\D_X$ fits into the semi-orthogonal decomposition
\[ D^b(X)=\langle \OO_X(-1),\D_X,\OO_X,\OO_X(1) \rangle \]
by the Serre duality on $X$. We define the projection functor $\mathrm{pr}:D^b(X) \to \D_X$ by $\mathrm{pr}:=\mathbf{R}_{\OO_X(-1)}\mathbf{L}_{\OO_X}\mathbf{L}_{\OO_X(1)}$.
Recall the following property of  the projection functor $\mathrm{pr}:D^b(X) \to \D_X$. 

\begin{lem}[Proposition 1.4 in \cite{O}]\label{pr}
If $x \neq y \in X$, the object $\mathrm{pr}(\OO_x)$ is not isomorphic to $\mathrm{pr}(\OO_y)$.
\end{lem}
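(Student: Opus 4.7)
The plan is to compute $\mathrm{pr}(\OO_x)$ explicitly as a two-term complex in $D^b(X)$ and extract a cohomology sheaf whose isomorphism class pins down the point $x$. Since $\D_X\subset D^b(X)$ is a full subcategory, any isomorphism $\mathrm{pr}(\OO_x)\simeq \mathrm{pr}(\OO_y)$ in $\D_X$ induces isomorphisms of cohomology sheaves in $D^b(X)$, so it suffices to find an invariant of $\mathrm{pr}(\OO_x)$ that recovers $x$.

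First I would unravel $\mathrm{pr}=\mathbf{R}_{\OO_X(-1)}\mathbf{L}_{\OO_X}\mathbf{L}_{\OO_X(1)}$ one mutation at a time. From $\mathbf{R}\Hom(\OO_X(1),\OO_x)=\CC$ one obtains $\mathbf{L}_{\OO_X(1)}(\OO_x)\simeq \mathcal{I}_x(1)[1]$, and Kodaira vanishing applied to $0\to \mathcal{I}_x(1)\to \OO_X(1)\to \OO_x\to 0$ gives $\mathbf{R}\Gamma(\mathcal{I}_x(1))=\CC^5$ concentrated in degree zero. Hence $\mathbf{L}_{\OO_X}(\mathcal{I}_x(1)[1])\simeq \mathcal{K}_x[2]$, where $\mathcal{K}_x$ is the kernel of the evaluation surjection $H^0(\mathcal{I}_x(1))\otimes \OO_X\twoheadrightarrow \mathcal{I}_x(1)$. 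A parallel Ext computation from the same two sequences yields $\mathbf{R}\Hom(\mathcal{K}_x,\OO_X(-1))\simeq \CC[-2]$, producing the distinguished triangle
\[
\mathrm{pr}(\OO_x)\to \mathcal{K}_x[2]\to \OO_X(-1)[4],
\]
whose long exact cohomology sequence gives $H^{-3}(\mathrm{pr}(\OO_x))\simeq \OO_X(-1)$ and $H^{-2}(\mathrm{pr}(\OO_x))\simeq \mathcal{K}_x$, with all other cohomology sheaves vanishing.

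To conclude, I would show that the isomorphism class of $\mathcal{K}_x$ already recovers $x$. For $y\neq x$ at least one of the five generators of $H^0(\mathcal{I}_x(1))$ is nonzero at $y$, so the evaluation map has constant corank one on $X\setminus\{x\}$ and $\mathcal{K}_x$ is locally free of rank four there. At $x$ itself, tensoring the defining sequence with $k(x)$ and using $\mathrm{Tor}_1^{\OO_X}(\mathcal{I}_x(1),k(x))\cong \wedge^2 T_xX^*$ (nonzero because $\mathcal{I}_x$ is not locally principal on the smooth $4$-fold $X$) gives $\dim \mathcal{K}_x\otimes k(x)=7$. Hence the non-locally-free locus of $\mathcal{K}_x$ equals $\{x\}$, so $\mathcal{K}_x\simeq \mathcal{K}_y$ forces $x=y$ and therefore $\mathrm{pr}(\OO_x)\not\simeq \mathrm{pr}(\OO_y)$.

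The main obstacle is the singular fiber calculation at $x$, which genuinely uses that $\mathcal{I}_x$ has positive projective dimension as an $\OO_{X,x}$-module. A more abstract alternative would be to prove $\Hom_{D^b(X)}(\OO_x,\mathrm{pr}(\OO_y))=0$ directly: the triangle above together with $\Ext^i(\OO_x,\mathcal{K}_y)=0$ for $i<4$ (valid when $y\neq x$, so $\mathcal{K}_y$ is locally free at $x$) forces this Hom to vanish, and one then transfers to $\Hom_{\D_X}(\mathrm{pr}(\OO_x),\mathrm{pr}(\OO_y))$ via the appropriate adjunction with the inclusion $\D_X\hookrightarrow D^b(X)$.
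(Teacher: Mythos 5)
Your computation is correct and complete: the mutation-by-mutation evaluation of $\mathrm{pr}(\OO_x)$, the identification of its two cohomology sheaves $\OO_X(-1)$ and $\mathcal{K}_x$, and the recovery of $x$ as the unique non-locally-free point of $\mathcal{K}_x$ (generic rank $4$ versus fiber dimension $6+5-4=7$ at $x$) all check out. Note that the paper itself offers no proof here — it simply cites Proposition 1.4 of \cite{O} — and your argument is essentially the standard explicit computation of the projection of a skyscraper sheaf carried out in that reference, so nothing further is needed.
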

Lemma \ref{pr} implies the injectivity of $\rho_1$.
\begin{prop}\label{cubic injective}
The homomorphism $\rho_1$ is injective.
\end{prop}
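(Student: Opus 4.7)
The plan is to show that the kernel of $\rho_1$ is trivial, using Lemma \ref{pr} as the essential input. Suppose $f \in \Aut(X)$ satisfies $\rho_1(f) \simeq \mathrm{id}_{\D_X}$; I want to deduce that $f = \mathrm{id}_X$ by showing that $f$ fixes every closed point of $X$, and the natural thing to probe is the image of skyscraper sheaves under the projection functor $\mathrm{pr}$.

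First I would observe that every automorphism of $X$ preserves the line bundles $\OO_X(k)$: since $\mathrm{Pic}(X) = \ZZ \cdot \OO_X(1)$ and $\omega_X \simeq \OO_X(-3)$ is $f$-invariant, one has $f^*\OO_X(1) \simeq \OO_X(1)$ (and hence $f_*\OO_X(k) \simeq \OO_X(k)$ for every $k$). Consequently $f_*$ respects the semi-orthogonal decomposition $D^b(X) = \langle \OO_X(-1),\D_X,\OO_X,\OO_X(1) \rangle$, and since the projection $\mathrm{pr} = \mathbf{R}_{\OO_X(-1)}\mathbf{L}_{\OO_X}\mathbf{L}_{\OO_X(1)}$ is built as a composition of mutations along those $f_*$-invariant exceptional objects, one obtains a natural isomorphism of functors
\[
f_* \circ \mathrm{pr} \;\simeq\; \mathrm{pr} \circ f_* \colon D^b(X) \to \D_X.
\]

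Now I would apply this to skyscraper sheaves. For any closed point $x \in X$ we have $f_*\OO_x \simeq \OO_{f(x)}$, so the intertwining above yields
\[
\mathrm{pr}(\OO_{f(x)}) \;\simeq\; f_* \bigl(\mathrm{pr}(\OO_x)\bigr).
\]
The hypothesis $\rho_1(f) \simeq \mathrm{id}_{\D_X}$ means that $f_*$, viewed as an autoequivalence of $\D_X$, is isomorphic to the identity, so the right-hand side is isomorphic to $\mathrm{pr}(\OO_x)$. Thus $\mathrm{pr}(\OO_{f(x)}) \simeq \mathrm{pr}(\OO_x)$ in $\D_X$, and Lemma \ref{pr} forces $f(x) = x$. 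Since this holds for every closed point, $f = \mathrm{id}_X$ and $\rho_1$ is injective.

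The only delicate point is the intertwining $f_* \circ \mathrm{pr} \simeq \mathrm{pr} \circ f_*$; this is not a deep obstacle, but it must be spelled out using that $\Aut(X)$ acts trivially on $\mathrm{Pic}(X)$ so that the exceptional collection $(\OO_X(-1),\OO_X,\OO_X(1))$ is termwise $f_*$-invariant, and that mutations are functorial with respect to autoequivalences preserving the exceptional object. Everything else is a direct invocation of Lemma \ref{pr}.
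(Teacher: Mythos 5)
Your proposal is correct and follows essentially the same route as the paper: establish $f_*\OO_X(k)\simeq\OO_X(k)$, deduce $f_*\circ\mathrm{pr}\simeq\mathrm{pr}\circ f_*$, apply this to skyscraper sheaves, and conclude via Lemma \ref{pr}. You merely spell out the justification of the intertwining step (triviality of the action on $\mathrm{Pic}(X)$ and compatibility of mutations with $f_*$) in more detail than the paper does.
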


\begin{proof}
Since $f_*\OO_X(k) \simeq \OO_X(k)$ for $k \in \ZZ$, for an automorphism $f \in \Aut(X)$, we have $f_* \circ \mathrm{pr} \simeq \mathrm{pr} \circ f_* $.
So we obtain $f_* \circ \mathrm{pr}(\OO_x) \simeq \mathrm{pr}(\OO_{f(x)})$ for any point $x \in X$. By Lemma \ref{pr}, the homomorphism $\rho_1$ is injective.
\end{proof}

We define the symplectic automorphism group of $X$ and the symplectic autoequivalence group of $\D_X$.

\begin{dfn}
An automorphism $f \in \Aut(X)$ is a symplectic automorphism of $X$ if $f^*$ acts on $H^{3,1}(X)$ trivially.
The group $\Aut_{\mathrm{s}}(X)$ of symplectic automorphisms of $X$ is called the symplectic automorphism group of $X$.
An autoequivalence $\Phi \in \Aut^{\mathrm{FM}}(\D_X)$ is a symplectic autoequivalence if $\Phi^H$ acts on $\widetilde{H}^{2,0}(\D_X)$ trivially. The group $\Aut^{\mathrm{FM}}_{\mathrm{s}}(\D_X)$ of symplectic autoequivalences of $\D_X$ is called the symplectic autoequivalence group of $\D_X$.
\end{dfn}

Symplectic automorphisms of cubic fourfolds are compatible with symplectic autoequivalences of Kuznetsov components. 

\begin{rem}
Let $f \in \Aut(X)$ be an automorphism of $X$. By Definition \ref{Mukai lattice}, $f \in \Aut_{\mathrm{s}}(X)$ if and only if $f_* \in \Aut^{\mathrm{FM}}(\D_X)$.
\end{rem}

As Definition \ref{stabilizer K3}, we introduce the following groups.

\begin{dfn}\label{stabilizer cubic}
For a stability condition $\sigma \in \Stab^*(\D_X)$, we define the group $\Aut(\D_X,\sigma)$ of autoequivalences fixing $\sigma$ by
\[\Aut(\D_X,\sigma):=\{\Phi \in \Aut^{\mathrm{FM}}(\D_X) \mid \Phi\sigma=\sigma\}.\]
Denote the intersection of $\Aut(\D_X,\sigma)$ and $\Aut^{\mathrm{FM}}_{\mathrm{s}}(\D_X)$ by $\Aut_{\mathrm{s}}(\D_X,\sigma)$.
\end{dfn}

As in Definition \ref{stabilizer Hodge K3}, we define the following groups in terms of Hodge theory.
\begin{dfn}\label{stabilizer Hodge cubic}
For a linear subspace $W \subset H^*(\D_X,\RR)$, we define 
\[\mathrm{O}_{\mathrm{Hodge}}(H^*(\D_X,\ZZ),W):=\{\varphi \in \mathrm{O}_{\mathrm{Hodge}}(H^*(\D_X,\ZZ)) \mid \varphi |_W=\mathrm{id}_W\}.\]
The weight two Hodge structure on $H^*(\D_X,\ZZ)$ defines the positive definite plane $P_X$ given by 
\[P_X:=\bigl(\widetilde{H}^{2,0}(\D_X)\oplus\widetilde{H}^{0,2}(\D_X)\bigr)\cap H^*(\D_X,\RR).  \]
For a stability condition $\sigma=(Z,\A) \in \Stab^*(\D_X)$, we have the positive definite plane $P_\sigma$ defined by
\[P_\sigma:=\langle \mathrm{Re}\bigl(\pi_X(\sigma)\bigl), \mathrm{Im}\bigl(\pi_X(\sigma)\bigr) \rangle_{\mathbb{R}}.\] 
For a stability condition $\sigma \in \Stab^*(\D_X)$, we put
\[ \Pi_\sigma:=P_X \oplus P_\sigma.\] 
\end{dfn}

For stability conditions in Theorem \ref{BLMS stability}, the corresponding positive definite planes come from $A_2$.
\begin{prop}[\cite{BLMS}, Proposition 9.11]\label{A2}
For a real number $0<\alpha<1/4$, we have $P_{\sigma_\alpha}=A_2 \otimes \RR$.
\end{prop}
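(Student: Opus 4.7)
The identity $P_{\sigma_\alpha} = A_2\otimes\RR$ reduces to the inclusion $\Omega_{Z_\alpha}\in A_2\otimes\CC$: both $A_2\otimes\RR$ and $P_{\sigma_\alpha}$ are positive definite real planes in $\widetilde H^{1,1}(\D_X,\ZZ)\otimes\RR$ (the former since $A_2$ has positive diagonal entries and Gram determinant $3$), so once $\Omega_{Z_\alpha}\in A_2\otimes\CC$ is known, taking real and imaginary parts yields $P_{\sigma_\alpha}\subseteq A_2\otimes\RR$, and equality follows from equal dimensions. By non-degeneracy of the Mukai pairing on $\widetilde H^{1,1}(\D_X,\ZZ)$, the containment $\Omega_{Z_\alpha}\in A_2\otimes\CC$ is equivalent to the vanishing $Z_\alpha(w)=0$ for every $w\in A_2^\perp\cap\widetilde H^{1,1}(\D_X,\ZZ)$.

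Fix such a $w$. Proposition \ref{prim} gives $v_X(w)=T$ for some $T\in H^{2,2}_{\mathrm{prim}}(X,\ZZ)$, so $T\cdot H^2=0$. Inverting $\ch(w)\sqrt{\mathrm{td}(X)}=T$ yields $\ch_0(w)=\ch_1(w)=0$, $\ch_2(w)=T$, and $\ch_k(w)$ for $k\geq 3$ is $T$ times a polynomial in $H$ and $c_2(X)$. Choose any line $l\subset X$ (the stability condition $\sigma^l_\alpha$ is independent of this choice by Theorem \ref{BLMS stability}). The central charge $Z^l_\alpha(w)=-iZ_{\alpha,-1}(\widetilde\Psi_l(w))$ depends on $\widetilde\Psi_l(w)$ only through $\chbo(\widetilde\Psi_l(w))$, $\chbi(\widetilde\Psi_l(w))$, $\chbii(\widetilde\Psi_l(w))$ at $\beta=-1$. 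By Grothendieck--Riemann--Roch for the conic bundle $\pi_l:\mathrm{Bl}_lX\to\PP^3$,
\[
\ch(\widetilde\Psi_l(w)) = -\pi_{l*}\bigl(p_l^*\ch(w)\cdot e^{q_l^*h}\cdot \ch(\E_l)\cdot \mathrm{td}(T_{\pi_l})\bigr),
\]
and since $\pi_{l*}$ lowers cohomological degree by $2$, these three quantities depend only on $\ch_0(w),\dots,\ch_3(w)$.

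Substituting the explicit form of these Chern characters, every resulting contribution factors through pushforwards $\pi_{l*}(p_l^*T\cdot\eta)$ or $\pi_{l*}(p_l^*(TH)\cdot\eta')$ with $\eta,\eta'$ of degree at most $2$ on $\mathrm{Bl}_lX$. Two elementary identities then conclude the argument: $p_l^*T\cdot E'_l=0$, since the restriction $T|_l=0$ vanishes on the line $l\cong\PP^1$, where $E'_l$ denotes the $p_l$-exceptional divisor; and $\pi_{l*}(p_l^*T)=0$, obtained by projection formula from $\pi_{l*}(p_l^*T)\cdot h^2 = p_l^*T\cdot\pi_l^*h^2$ after expanding $\pi_l^*h=p_l^*H-E'_l$ and using $T\cdot H^2=0$ together with $p_l^*T\cdot E'_l=0$. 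The same two identities give $\pi_{l*}(p_l^*(TH))=0$. Decomposing the remaining factors in the basis $\{p_l^*H,\,E'_l\}$ of $H^2(\mathrm{Bl}_lX)$ annihilates every term, so $\chbo(\widetilde\Psi_l(w))=\chbi(\widetilde\Psi_l(w))=\chbii(\widetilde\Psi_l(w))=0$ and $Z^l_\alpha(w)=0$.

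The main obstacle is the Grothendieck--Riemann--Roch bookkeeping, in particular computing the low-degree components of $\ch(\E_l)\cdot\mathrm{td}(T_{\pi_l})$ via the resolution in Remark \ref{exact} and the conic-bundle structure of $\pi_l$. However, the primitivity of $T$ provides a uniform cancellation mechanism, reducing the entire verification to the two basic identities above, so once the classes are organized the calculation is mechanical.
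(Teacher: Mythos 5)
The paper does not prove this proposition at all --- it is imported verbatim as a citation of \cite{BLMS}, Proposition 9.11 --- so there is no internal proof to compare against; your argument is a correct self-contained reconstruction along the same lines as the cited source (a Grothendieck--Riemann--Roch computation of $\ch_{\mathcal{B}^l_0,\leq 2}$ of $\widetilde{\Psi_l}(w)$ through the conic fibration $\pi_l$). The reduction to the vanishing $Z_\alpha(w)=0$ for $w\in A_2^\perp\cap\widetilde H^{1,1}(\D_X,\ZZ)$ is legitimate because $A_2$ is nondegenerate, and Proposition \ref{prim} correctly places $v_X(w)=T$ in $H^{2,2}_{\mathrm{prim}}(X,\ZZ)$, giving $\ch_{\leq 1}(w)=0$, $\ch_2(w)=T$, $\ch_3(w)\in\QQ\, TH$. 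Your two identities do carry the whole computation: $p_l^*T\cdot E'_l=0$ is automatic (as $T|_l\in H^4(l)=0$), while $\pi_{l*}(p_l^*T)=0$ and $\pi_{l*}(p_l^*(TH))=0$ genuinely use $T\cdot H^2=0$ via $\pi_l^*h=p_l^*H-E'_l$; since every degree-$\leq 3$ term of $p_l^*\ch(w)\cdot e^{\pi_l^*h}\ch(\E_l)\,\mathrm{td}(T_{\pi_l})$ is $p_l^*T$ or $p_l^*(TH)$ times a class in $\langle 1, p_l^*H, E'_l\rangle$, all of $\ch_{\mathcal{B}^l_0,0},\ch_{\mathcal{B}^l_0,1},\ch_{\mathcal{B}^l_0,2}$ vanish (the correction factor $1-\tfrac{11}{32}h^2$ and the twist $e^{h}$ only mix these degrees among themselves), hence $Z_\alpha(w)=0$. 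The one improvement over the explicit computation in \cite{BLMS} is exactly this cancellation mechanism: you never need to evaluate $\ch(\E_l)$ or the virtual relative Todd class, only to know that their low-degree pieces lie in $H^0\oplus H^2(\mathrm{Bl}_lX,\QQ)=\QQ\oplus\QQ p_l^*H\oplus\QQ E'_l$.
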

\subsection{Automorphisms of cubic fourfolds}
In this subsection, we prove that automorphisms of cubic fourfolds fix stability conditions in Theorem \ref{BLMS stability}.
The goal of this subsection is the following proposition.
\begin{prop}\label{fix cubic}
Fix a real number $0<\alpha<1/4$. For an automorphism $f \in \Aut(X)$ of $X$, we have $f_*\sigma_\alpha=\sigma_\alpha$. In particular, we have the natural inclusions 
\[\Aut(X) \hookrightarrow \Aut(\D_X,\sigma_\alpha),  \]
and
\[\Aut_{\mathrm{s}}(X) \hookrightarrow \Aut_{\mathrm{s}}(\D_X,\sigma_\alpha).\]
\end{prop}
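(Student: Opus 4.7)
The proof should parallel that of Proposition \ref{fix K3}, but uses the line-dependent construction of \cite{BLMS} in place of the straightforward tilt construction on a K3 surface. The plan is to fix a line $l \subset X$, verify that $f_*\sigma_\alpha^l = \sigma_\alpha^{f(l)}$, and then invoke Theorem \ref{BLMS stability} (which gives $\sigma_\alpha^l = \sigma_\alpha^{f(l)} = \sigma_\alpha$) to conclude $f_*\sigma_\alpha = \sigma_\alpha$. As in the K3 case, I would split the verification into two parts: (i) the central charges agree, and (ii) the hearts agree.

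For part (i), I would invoke Proposition \ref{A2} to reduce to showing that $f^H_*$ fixes $A_2 \otimes \CC$ pointwise, since $\Omega_{Z_\alpha^l} \in A_2 \otimes \CC$. Recall that $A_2 = \langle \lambda_1, \lambda_2 \rangle$ with $\lambda_k = [i^*\OO_l(k)]$. Because the Fano variety of lines on $X$ is irreducible all lines are numerically equivalent, and one checks directly from the definition of the projection functor $\mathrm{pr}$ that $[i^*E] \in H^*(\D_X,\ZZ)$ depends only on the class of $E$ in $K_{\mathrm{top}}(X)$; hence $\lambda_k$ is independent of $l$. Since $f$ sends lines to lines this gives $f_*\lambda_k = \lambda_k$, so $f^H_*$ fixes $A_2 \otimes \CC$ and therefore $Z_\alpha^l \circ (f^H_*)^{-1} = Z_\alpha^l$.

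For part (ii), I would use the naturality of the BLMS construction with respect to $f$. The automorphism $f$ restricts to an isomorphism $l \iso f(l)$, inducing an isomorphism $\tilde f \colon \mathrm{Bl}_l X \iso \mathrm{Bl}_{f(l)} X$ and, via the linear projections from $l$ and $f(l)$, a compatible isomorphism $\bar f \colon \PP^3 \iso \PP^3$ intertwining the conic fibrations $\pi_l$ and $\pi_{f(l)}$. This yields an isomorphism $\bar f^* \B^{f(l)}_0 \simeq \B^l_0$ of sheaves of even Clifford algebras and hence an equivalence of twisted derived categories making the diagram
\[
\xymatrix{
\D_X \ar[r]^-{\Psi_l \circ \mathbf{L}p_l^*} \ar[d]_{f_*} & D^b(\PP^3, \B^l_0) \ar[d]^{\bar f_*} \\
\D_X \ar[r]^-{\Psi_{f(l)} \circ \mathbf{L}p_{f(l)}^*} & D^b(\PP^3, \B^{f(l)}_0)
}
\]
commute. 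Since $\bar f_*$ preserves the standard heart on the twisted $\PP^3$ and the twisted Chern character $\chb$ under the identification of Clifford algebras, it sends $\mathrm{Coh}^0_{\alpha,-1}(\PP^3, \B^l_0)$ to $\mathrm{Coh}^0_{\alpha,-1}(\PP^3, \B^{f(l)}_0)$; pulling back through $\widetilde{\Psi_l}$ and $\widetilde{\Psi_{f(l)}}$ yields $f_*(\A_\alpha^l) = \A_\alpha^{f(l)}$.

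The main obstacle is the naturality claim underlying (ii): one must verify that the sheaf $\E_l$ of right $\pi_l^*\B^l_0$-modules appearing in the exact sequence of Remark \ref{exact} transforms equivariantly under $\bar f$, that is, $\bar f^* \E_{f(l)} \simeq \E_l$ up to the correct twist, so that $\bar f_*$ truly intertwines $\Psi_l$ and $\Psi_{f(l)}$. Granting this, combining (i) and (ii) with Theorem \ref{BLMS stability} produces $f_*\sigma_\alpha = \sigma_\alpha$, giving the first inclusion; the injectivity is Proposition \ref{cubic injective}. For the symplectic inclusion, observe that by Definition \ref{Mukai lattice} the subspace $\widetilde H^{2,0}(\D_X)$ equals $v_X^{-1}(H^{3,1}(X))$, so $f_*$ acts trivially on $\widetilde H^{2,0}(\D_X)$ if and only if $f^*$ acts trivially on $H^{3,1}(X)$, and the claim follows.
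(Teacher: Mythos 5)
Your proposal follows essentially the same route as the paper: fix $A_2$ pointwise to handle the central charge via Proposition \ref{A2}, use the equivariance of the Clifford algebras (the paper's Lemma \ref{Clifford}) and the commutative diagram intertwining $\Psi_l$ and $\Psi_{f(l)}$ (the paper's Proposition \ref{automorphisms}) together with compatibility of tilting to match the hearts, and then conclude with the line-independence statement of Theorem \ref{BLMS stability}. The one step you flag as an obstacle, $f_{l*}\E_l \simeq \E_{f(l)}$, is exactly what the paper checks, by applying $f$ to the exact sequence of Remark \ref{exact} and using Lemma \ref{Clifford} on its first two terms.
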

In Section 7, we will see the above inclusions are isomorphisms.
To prove Proposition \ref{fix cubic}, we study relations between automorphisms of cubic fourfolds and sheaves of Clifford algebras. 
Fix a line $l$ on $X$.
An automorphism $f \in \Aut(X)$ of $X$ induces $f_l: \mathrm{Bl}_lX \iso \mathrm{Bl}_{f(l)}X$ and $\tilde{f}_l \in \Aut(\mathbb{P}^3)$ such that we have the following commutative diagrams.

\[\xymatrix{
&\mathrm{Bl}_lX \ar[r]^{f_l} \ar[d]_{p_l} \ar@{}[dr]^{} & \mathrm{Bl}_{f(l)}X \ar[d]^{p_{f(l)}}&&&\mathrm{Bl}_lX \ar[r]^{f_l} \ar[d]_{\pi_l} & \mathrm{Bl}_{f(l)}X \ar[d]^{\pi_{f(l)}}\\
&X \ar[r]^{f} & X && &\mathbb{P}^3 \ar[r]^{\tilde{f}_l} & \mathbb{P}^3
}\]

The following is the relation between automorphisms of $X$ and the sheaves of Clifford algebras on $\mathbb{P}^3$.
\begin{lem}[Lemma 3.2 in \cite{Kuz08}, Lemma 7.2 in \cite{BLMS}]\label{Clifford}
Let $k \in \ZZ$ be an integer. For an automorphism $f \in \Aut(X)$ of $X$, we have $\tilde{f}_{l*}\B^l_k \simeq \B^{f(l)}_k$.
\end{lem}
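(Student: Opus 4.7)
The plan is to use naturality of the Clifford algebra construction under isomorphisms of conic bundles. The two commutative diagrams displayed just before the lemma record precisely that the pair $(f_l, \tilde{f}_l)$ is an isomorphism of conic fibrations from $\pi_l$ to $\pi_{f(l)}$. My starting point will be that $\tilde{f}_l$ is itself a linear automorphism of $\mathbb{P}^3$: by the Lefschetz hyperplane theorem one has $\mathrm{Pic}(X) = \ZZ \cdot \O_X(1)$, so any $f \in \Aut(X)$ satisfies $f^*\O_X(1) \simeq \O_X(1)$ (by ampleness), hence lifts to a linear $\bar{f} \in \mathrm{PGL}(\mathbb{P}^5)$ with $\bar{f}(l) = f(l)$, and $\tilde{f}_l$ is the quotient action of $\bar{f}$ on $\mathbb{P}^3 = \mathbb{P}^5/\langle l \rangle$. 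In particular $\tilde{f}_l^*\O_{\mathbb{P}^3}(h) \simeq \O_{\mathbb{P}^3}(h)$.

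Next I would unpack the quadratic-bundle data: writing $E_l := \O_{\mathbb{P}^3}^{\oplus 2}\oplus\O_{\mathbb{P}^3}(-h)$ so that $\mathrm{Bl}_l\mathbb{P}^5 = \mathbb{P}(E_l)$ over $\mathbb{P}^3$, and $Q_l \in H^0(\mathrm{Sym}^2 E_l^\vee \otimes \O_{\mathbb{P}^3}(h))$ for the quadratic form cutting out $\mathrm{Bl}_l X$ inside $\mathrm{Bl}_l\mathbb{P}^5$, the lift $\bar{f}$ supplies an isomorphism of quadratic bundles $(E_l, Q_l) \simeq (\tilde{f}_l^* E_{f(l)}, \tilde{f}_l^* Q_{f(l)})$, compatibly with the upper commutative square displayed above. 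Since $\B^l_0$ and $\B^l_1$ are by definition the even and odd parts of the sheaf of Clifford algebras of $(E_l, Q_l)$, and this assignment is manifestly natural in isomorphisms of quadratic bundles, I will obtain canonical isomorphisms $\tilde{f}_l^*\B^{f(l)}_k \simeq \B^l_k$ for $k = 0, 1$, equivalently $\tilde{f}_{l*}\B^l_k \simeq \B^{f(l)}_k$ (using that $\tilde{f}_l$ is an isomorphism).

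The remaining cases $k \in \ZZ$ will follow by induction from the recursive definition $\B^l_k = \B^l_{k-2}\otimes \O_{\mathbb{P}^3}(h)$, together with the projection formula and the invariance $\tilde{f}_{l*}\O_{\mathbb{P}^3}(h) \simeq \O_{\mathbb{P}^3}(h)$ established above. The main obstacle I anticipate is not conceptual but organizational: one has to check carefully that the intrinsic quadratic-bundle data $(E_l, Q_l)$ is genuinely exchanged with $(E_{f(l)}, Q_{f(l)})$ by the pair $(\bar{f}, \tilde{f}_l)$, and not merely up to some auxiliary twist. This should come down to the explicit description of $E_l$ as a push-forward along $q_l: \mathrm{Bl}_l\mathbb{P}^5 \to \mathbb{P}^3$ of a natural sheaf, and of $Q_l$ as descended from the defining cubic equation of $X$, both of which are visibly $\bar{f}$-equivariant once the linearity of $\bar{f}$ is in hand.
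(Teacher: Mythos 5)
Your proposal is correct and follows essentially the same route as the paper, which simply defers to the proof of Lemma 3.2 in \cite{Kuz08} (cf.\ Lemma 7.2 in \cite{BLMS}), where exactly this naturality of the even/odd Clifford algebra under isomorphisms of the underlying quadratic bundle data is observed in a more general setting. Your additional details --- the linearity of $\tilde{f}_l$ via $\mathrm{Pic}(X)=\ZZ\cdot\OO_X(1)$, hence $\tilde{f}_l^*\OO_{\mathbb{P}^3}(h)\simeq\OO_{\mathbb{P}^3}(h)$, and the induction on $k$ via $\B^l_k=\B^l_{k-2}\otimes\OO_{\mathbb{P}^3}(h)$ --- correctly fill in what the paper leaves to the citation.
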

\begin{proof}
In the proof of Lemma 3.2 in \cite{Kuz08}, this is observed in the more general situation. (Cf. Lemma 7.2 in \cite{BLMS}).
\end{proof}

The fully faithful functor in Subsection 3.4 is compatible with automorphisms of $X$.

\begin{prop}\label{automorphisms}
Let $f \in \Aut(X)$ be an automorphism of $X$. Then we have the following commutative diagram.
\[\xymatrix{\D_X \ar[d]_{\Psi_l \circ \mathbf{L}p^*_l} \ar[r]^{f_*} & \D_X \ar[d]^{\Psi_{f(l)} \circ \mathbf{L}p^*_{f(l)}} \\
D^b(\mathbb{P}^3, \B^l_0) \ar[r]^{\tilde{f}_{l*}} & D^b(\mathbb{P}^3,\B^{f(l)}_0)}  \]
\end{prop}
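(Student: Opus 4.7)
The plan is to factor the claim through the two building blocks of the fully faithful functor $\Psi_l \circ \mathbf{L}p_l^* \colon \D_X \to D^b(\mathbb{P}^3,\B^l_0)$, namely the blow-up pullback $\mathbf{L}p_l^*$ and the conic-bundle pushforward $\Psi_l$, and verify compatibility of each piece with $f$ separately. Concretely, I will establish the two natural isomorphisms
\[
\mathbf{L}p_{f(l)}^{\,*}\circ f_{*}\;\simeq\; f_{l*}\circ \mathbf{L}p_{l}^{\,*}, \qquad
\Psi_{f(l)}\circ f_{l*}\;\simeq\; \tilde{f}_{l*}\circ \Psi_{l},
\]
and then compose them to obtain the desired square.

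First I would dispatch the left-hand isomorphism. Since $f$ and $f_l$ are isomorphisms and the first diagram in the excerpt gives $f \circ p_l = p_{f(l)} \circ f_l$, taking inverse images and then pushforwards yields $\mathbf{L}p_{f(l)}^{\,*}\circ f_{*}\simeq f_{l*}\circ \mathbf{L}p_{l}^{\,*}$ (no higher direct images appear because $f_l$ is an isomorphism, so $f_{l*}$ and $\mathbf{L}p_l^*$ are both exact). This is essentially formal.

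The main content is the second isomorphism. Unwinding the definition of $\Psi_{f(l)}$, applying the projection formula to $f_{l*}$, and using the identity $\pi_{f(l)}\circ f_l=\tilde{f}_l\circ \pi_l$ from the second diagram, one computes
\[
\Psi_{f(l)}(f_{l*}E)\;\simeq\; \tilde{f}_{l*}\,\mathbf{R}{\pi_l}_{*}\bigl(E\otimes f_l^{*}\OO_{\mathrm{Bl}_{f(l)}X}(h)\otimes f_l^{*}\E_{f(l)}\bigr)[1].
\]
Since $\tilde{f}_l$ is a linear automorphism of $\mathbb{P}^3$, $f_l^{*}\OO(h)\simeq \OO(h)$, so everything reduces to producing an isomorphism $f_l^{*}\E_{f(l)} \simeq \E_l$ of right $\pi_l^{*}\B^l_0$-modules. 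For this I would lift $f$ to the ambient linear automorphism $\tilde{F}\in\mathrm{PGL}(\mathbb{P}^5)$ and then to $\tilde{F}_l\colon \mathrm{Bl}_l\mathbb{P}^5\iso \mathrm{Bl}_{f(l)}\mathbb{P}^5$ covering $\tilde{f}_l$ and extending $f_l$ along $j_l$. Pulling back the defining exact sequence of $\E_{f(l)}$ from Remark \ref{exact} through $\tilde{F}_l^{*}$ gives the analogous exact sequence whose outer terms, by Lemma \ref{Clifford} (i.e.\ $\tilde{f}_l^{*}\B^{f(l)}_{k}\simeq \B^{l}_{k}$), coincide canonically with the outer terms of the defining sequence of $\E_l$; the cokernel is therefore $j_{l*}\E_l$, which identifies the two modules and the identification is compatible with the right Clifford actions since the isomorphism $\tilde{f}_l^{*}\B^{f(l)}_0 \simeq \B^l_0$ of Lemma \ref{Clifford} is an isomorphism of sheaves of algebras.

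The step I expect to be the main obstacle is precisely this bookkeeping with the sheaves of algebras: one has to verify that the isomorphism $f_l^{*}\E_{f(l)}\simeq\E_l$ obtained from pulling back the resolution intertwines the right $\pi_{f(l)}^{*}\B_0^{f(l)}$-module structure on $\E_{f(l)}$, transported via $\pi_l^{*}\tilde{f}_l^{*}\B_0^{f(l)}\simeq \pi_l^{*}\B_0^l$, with the given right $\pi_l^{*}\B_0^l$-module structure on $\E_l$. Granting this (which is the content of the algebra-level statement in Lemma \ref{Clifford} as used in \cite{Kuz08}), composing the two natural isomorphisms above yields the commutative diagram asserted in the proposition.
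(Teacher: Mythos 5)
Your proposal is correct and follows essentially the same route as the paper: the paper's (much terser) proof likewise reduces everything to Lemma \ref{Clifford} together with the exact sequence of Remark \ref{exact} to obtain $f_{l*}\E_l \simeq \E_{f(l)}$, and then declares the diagram commutative. The extra steps you spell out (base change along the isomorphism square $f\circ p_l = p_{f(l)}\circ f_l$, the projection formula for $f_{l*}$, and the compatibility of the module structures under $\tilde f_{l*}\B^l_0\simeq\B^{f(l)}_0$) are exactly the routine verifications the paper leaves implicit.
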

\begin{proof}
By Lemma \ref{Clifford}, we have the equivalence $\tilde{f}_{l*}: D^b(\mathbb{P}^3, \B^l_0) \iso D^b(\mathbb{P}^3, \B^{f(l)}_0)$. By the definition of $f_l$ and $\tilde{f}_l$, Lemma \ref{Clifford} and the exact sequence (A), we have $f_{l*}\E_l \simeq \E_{f(l)}$. So we obtain the desired commutative diagram.
\end{proof}

We prove Proposition \ref{fix cubic}.

\begin{proof}[Proof of Proposition \ref{fix cubic}]
Let $f \in \Aut(X)$ be an automorphism of $X$. Note that $f^*$ acts on $A_2$ trivially. By Proposition \ref{A2}, we have $Z_\alpha \circ f^*=Z_\alpha$. By Lemma \ref{Clifford} and Proposition \ref{automorphisms}, the equivalence $\tilde{f}_{l*}: D^b(\PP^3,\B^l_0) \iso D^b(\PP^3,\B^{f(l)}_0)$ induces the equivalence \[\tilde{f}_{l*}: \Psi_l \circ \mathbf{L}p^*_l(\D_X) \iso \Psi_{f(l)} \circ \mathbf{L}^*_{f(l)}(\D_X).\] Since $\tilde{f}_{l*}$ is compatible with tilting (cf. Definition \ref{tilt}) in the construction of $\A^l_\alpha$, we have $\tilde{f}_{l*}\A^l_\alpha=\A^{f(l)}_\alpha$. 
So we obtain $f_*\sigma^l_\alpha=\sigma^{f(l)}_\alpha$. By Theorem \ref{BLMS stability}, we have $f_*\sigma_\alpha=\sigma_\alpha$.
\end{proof}

\section{Automorphism groups of cubic fourfolds and Kuznetsov components}
In this section, we characterize automorphism groups of cubic fourfolds as subgroups of autoequivalence groups of Kuznetsov components.

\subsection{Automorphisms of cubic fourfolds as autoequivalences}
In this subsection, we give the statement of the theorem which is main in this section.
Let $X$ be a cubic fourfold. Fix a real number $0<\alpha<1/4$. Put $\sigma:=\sigma_\alpha \in \Stab^*(\D_X)$. By Proposition \ref{cubic injective} and Proposition \ref{fix cubic}, the  group homomorphism $\rho_1: \Aut(X) \to \Aut(\D_X, \sigma)$ is injective. We define the group homomorphism $\rho_2: \Aut(\D_X,\sigma) \to \mathrm{O}_{\mathrm{Hodge}}(H^*(\D_X,\ZZ), P_\sigma)$ by
 $\rho_2(\Phi):=\Phi^H$.
 The goal of this section is the following theorem.

\begin{thm}\label{K3 sigma model}
The group homomorphisms \[\rho_1: \Aut(X) \to \Aut(\D_X, \sigma),\]
 \[\rho_2:  \Aut(\D_X,\sigma) \to \mathrm{O}_{\mathrm{Hodge}}(H^*(\D_X,\ZZ), P_\sigma)\]
are isomorphisms. In particular, we have isomorphisms 
\[\rho_1: \Aut_{\mathrm{s}}(X) \to \Aut_{\mathrm{s}}(\D_X, \sigma),\]
 \[\rho_2:  \Aut_{\mathrm{s}}(\D_X,\sigma) \to \mathrm{O}_{\mathrm{Hodge}}(H^*(\D_X,\ZZ), \Pi_\sigma).\]
 \end{thm}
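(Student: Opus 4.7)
The plan is as follows. Injectivity of $\rho_1$ is already Proposition \ref{cubic injective}. To upgrade this to the full theorem, I would establish two further statements: (A) $\rho_2$ is injective; (B) the composition $\rho_2 \circ \rho_1$ is surjective. Given (A) and (B), together with the injectivity of $\rho_1$, a direct diagram-chase forces both $\rho_1$ and $\rho_2$ to be isomorphisms. The symplectic versions then follow by restriction: $f \in \Aut_{\mathrm{s}}(X)$ is equivalent to $f^H$ fixing $\widetilde{H}^{2,0}(\D_X) = v_X^{-1}(H^{3,1}(X))$, i.e., to $f^H$ fixing $P_X$ pointwise; since $\Pi_\sigma = P_X \oplus P_\sigma$, this matches the symplectic condition on all three groups.

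For (A), I would mimic Huybrechts's argument in the K3 case. If $\Phi \in \Aut(\D_X,\sigma)$ satisfies $\Phi^H = \mathrm{id}$, then $\Phi$ preserves $\Stab^*(\D_X)$ (it acts trivially on $\P^+_0(\D_X)$) and so descends to an element of the deck transformation group of the covering $\pi_X: \Stab^*(\D_X) \to \P^+_0(\D_X)$. Since $\Phi\sigma = \sigma$ and deck transformations act freely, this deck transformation is trivial; combined with the analogue of the K3 statement in Theorem \ref{BriK3} for Kuznetsov components (expected to be available via the \cite{BLMNPS} machinery), this forces $\Phi = \mathrm{id}$.

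For (B), given a Hodge isometry $\varphi \in \mathrm{O}_{\mathrm{Hodge}}(H^*(\D_X,\ZZ), P_\sigma)$, I would combine Proposition \ref{A2} with Proposition \ref{prim}. Since $P_\sigma = A_2 \otimes \RR$ by Proposition \ref{A2} and $\varphi$ is integral, $\varphi$ fixes $A_2$ pointwise and hence restricts to a Hodge isometry of $A_2^\perp \subset H^*(\D_X,\ZZ)$. Via the Hodge isometry $v_X: A_2^\perp \iso H^4_{\mathrm{prim}}(X,\ZZ)(-1)$ of Proposition \ref{prim}, this produces a Hodge isometry $\varphi'$ of $H^4_{\mathrm{prim}}(X,\ZZ)$. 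Extending $\varphi'$ by the identity on $\langle H^2 \rangle$ yields a Hodge isometry $\tilde\varphi$ of $H^4(X,\ZZ)$ fixing $H^2$. By Voisin's global Torelli theorem for cubic fourfolds, $\tilde\varphi = f^*$ for a unique $f \in \Aut(X)$, and by construction $(\rho_2 \circ \rho_1)(f) = \varphi$.

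The main obstacle will be ensuring that $\tilde\varphi$ satisfies the \emph{effectivity} hypothesis required for Voisin's Torelli theorem: one must verify that $\tilde\varphi$ preserves the appropriate positive cone / distinguished chamber in $H^{2,2}(X,\RR)$, not merely $H^2$ itself. This is expected to follow from the fact that $\varphi$ fixes the positive definite plane $P_\sigma = A_2 \otimes \RR$ and is compatible with the signature $(2,20)$ structure of the orthogonal complement, but the sign verification is the delicate point. A secondary subtlety in step (A) is checking that the kernel of the action of $\Aut^{\FM}(\D_X)$ on $\Stab^*(\D_X)$ acts trivially on cohomology, which is the content of the Kuznetsov-component analogue of the second half of Theorem \ref{BriK3}.
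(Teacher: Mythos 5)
Your skeleton matches the paper exactly: injectivity of $\rho_1$ is Proposition \ref{cubic injective}, and the theorem is reduced to (A) injectivity of $\rho_2$ and (B) surjectivity of $\rho_2\circ\rho_1$. Your step (B) is essentially the paper's Proposition \ref{surj}: identify $P_\sigma$ with $A_2\otimes\RR$ via Proposition \ref{A2}, transport through $v_X\colon A_2^\perp\iso H^4_{\mathrm{prim}}(X,\ZZ)(-1)$ from Proposition \ref{prim}, and invoke the Torelli theorem for cubic fourfolds. The symplectic refinement by restriction is also as in the paper.

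The genuine gap is in step (A). Your argument shows only that $\Phi$ induces the trivial deck transformation of $\pi_X\colon\Stab^*(\D_X)\to\P^+_0(\D_X)$, i.e.\ that $\Phi$ fixes every stability condition in $\Stab^*(\D_X)$ and acts trivially on cohomology. To conclude $\Phi\simeq\mathrm{id}_{\D_X}$ you appeal to ``the analogue of the second half of Theorem \ref{BriK3} for Kuznetsov components,'' namely the injectivity of $\Aut^0(\D_X)\to\mathrm{Deck}(\pi_X)$. But that statement is not available from \cite{BLMNPS} or elsewhere: it is precisely the content of the paper's Proposition \ref{inj2}, and the paper in fact derives the injectivity of $\Aut^0(\D_X)\to\mathrm{Deck}(\pi)$ as a \emph{corollary} of Proposition \ref{inj2}, so your reduction runs in a circle. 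Bridgeland's proof of the corresponding injectivity for K3 surfaces uses skyscraper sheaves and line bundles inside the fixed heart $\mathrm{Coh}(S)$; the Kuznetsov component has no such objects, so a substitute is required. The paper supplies one: it realizes the Fano variety of lines $F(X)\simeq M_{\sigma_\alpha}(\lambda_1+\lambda_2)$ as a fine moduli space of $\sigma$-stable objects (Theorem \ref{Fano}), shows that any $\Phi\in\mathrm{Ker}(\rho_2)$ induces the identity on $M_\sigma(v)$ (via Theorem \ref{ker} of Beauville and Hassett--Tschinkel) and satisfies $\Phi\circ P_*\simeq P_*$ and $P^*\circ\Phi\simeq P^*$ (ruling out a line-bundle twist using Verbitsky's injectivity of $\mathrm{Sym}^2H^2\to H^4$), and then uses Addington's splitting $P_*\circ P^*\simeq\mathrm{id}\oplus[-2]$ together with the comonad comparison theorem of Barr--Wells and Elagin to force $\Phi\simeq\mathrm{id}_{\D_X}$. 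None of this is recoverable from the covering-space argument alone, so step (A) as proposed does not close.
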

By Proposition \ref{cubic injective}, it is enough to show that $\rho_2 \circ \rho_1$ is surjective and $\rho_2$ is injective. 

First, we prove the surjectivity of $\rho_2 \circ \rho_1$.

\begin{prop}\label{surj}
The composition $\rho_2 \circ \rho_1: \Aut(X) \to  \mathrm{O}_{\mathrm{Hodge}}(H^*(\D_X,\ZZ), P_\sigma)$ of $\rho_1$ and $\rho_2$ is surjective.
\end{prop}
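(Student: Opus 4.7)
The plan is to descend $\varphi$ to a Hodge isometry of $H^4(X,\ZZ)$ fixing the hyperplane square $H^2$, then invoke the global Torelli theorem for cubic fourfolds of Voisin. By Proposition \ref{A2}, $P_\sigma = A_2 \otimes \RR$, so any $\varphi \in \mathrm{O}_{\mathrm{Hodge}}(H^*(\D_X,\ZZ), P_\sigma)$ acts trivially on $A_2$ and consequently preserves $A_2^\perp$. Via the Hodge isometry $v_X \colon A_2^\perp \iso H^4_{\mathrm{prim}}(X,\ZZ)(-1)$ of Proposition \ref{prim}, the restriction $\varphi|_{A_2^\perp}$ transports to a Hodge isometry $\psi$ of $H^4_{\mathrm{prim}}(X,\ZZ)$.

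Next I would extend $\psi$ to a Hodge isometry $\tilde{\psi}$ of $H^4(X,\ZZ)$ fixing $H^2$. Both $H^*(\D_X,\ZZ)$ and $H^4(X,\ZZ)$ are unimodular, and the sublattices $A_2 \oplus A_2^\perp \subset H^*(\D_X,\ZZ)$ and $\ZZ H^2 \oplus H^4_{\mathrm{prim}}(X,\ZZ) \subset H^4(X,\ZZ)$ sit inside with matching index three and canonically identified discriminant groups $\ZZ/3\ZZ$ (both $A_2$ and $\langle H^2 \rangle$ have discriminant three). Because $\varphi$ extends from $A_2 \oplus A_2^\perp$ to the unimodular overlattice $H^*(\D_X,\ZZ)$ while acting as the identity on $A_2$, it must act trivially on the discriminant group of $A_2^\perp$; this condition transfers through $v_X$ to ensure that $\mathrm{id}_{\ZZ H^2} \oplus \psi$ extends to the desired isometry $\tilde{\psi}$ of $H^4(X,\ZZ)$.

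The global Torelli theorem for cubic fourfolds then produces $f \in \Aut(X)$ with $f^* = \tilde{\psi}$. To finish, I would verify that $\rho_2(\rho_1(f)) = f_*^H$ coincides with $\varphi$ on $H^*(\D_X,\ZZ)$. On $A_2$ both act trivially: $\varphi$ by hypothesis, and $f_*^H$ because the generators $\lambda_k = [i^*\OO_{\mathrm{line}}(k)]$ depend only on the topological type of $\OO_{\mathrm{line}}(k)$, which is preserved since $f$ sends lines to lines. On $A_2^\perp$, the restriction $f_*^H|_{A_2^\perp}$ corresponds via $v_X$ to $f^*|_{H^4_{\mathrm{prim}}(X,\ZZ)} = \psi$, which by construction equals $\varphi|_{A_2^\perp}$.

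The main obstacle I expect is the lattice-theoretic extension in the second paragraph: one must carefully compare the bilinear discriminant forms on $A_{A_2}$, $A_{A_2^\perp}$, $A_{\ZZ H^2}$ and $A_{H^4_{\mathrm{prim}}}$, taking into account that $H^4(X,\ZZ)$ is an odd lattice while $H^*(\D_X,\ZZ)$ is even and that the $(-1)$-twist in Proposition \ref{prim} flips the sign of the discriminant form, in order to confirm that the two overlattice gluings produce the same constraint on $\psi$. Once this verification is in place, the Torelli theorem delivers $f$ with essentially no further work.
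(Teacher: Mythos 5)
Your argument is correct and follows essentially the same route as the paper: identify $P_\sigma$ with $A_2\otimes\RR$ via Proposition \ref{A2}, transport Hodge isometries to $H^4_{\mathrm{prim}}(X,\ZZ)$ via Proposition \ref{prim}, and conclude by the Torelli theorem for cubic fourfolds. Your second paragraph merely makes explicit the discriminant-group gluing that the paper compresses into the asserted isomorphism $\mathrm{O}_{\mathrm{Hodge}}(H^*(\D_X,\ZZ), A_2\otimes\RR)\simeq \mathrm{O}_{\mathrm{Hodge}}(H^4_{\mathrm{prim}}(X,\ZZ))$.
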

\begin{proof}
By Proposition \ref{A2}, we have $\mathrm{O}_{\mathrm{Hodge}}(H^*(\D_X,\ZZ), P_\sigma)=\mathrm{O}_{\mathrm{Hodge}}(H^*(\D_X,\ZZ), A_2 \otimes \RR)$. By Proposition \ref{prim}, 
there is the isomorphism 
\[v: \mathrm{O}_{\mathrm{Hodge}}(H^*(\D_X,\ZZ), A_2 \otimes \RR) \xrightarrow{\sim} \mathrm{O}_{\mathrm{Hodge}}(H^*_{\mathrm{prim}}(X,\ZZ)).\] 
Under this isomorphism, the natural homomorphism $\Aut(X) \to \mathrm{O}_{\mathrm{Hodge}}(H^4_{\mathrm{prim}}(X,\ZZ))$ is compatible with $\rho_2 \circ \rho_1$. 
By the Torelli theorem for cubic fourfolds, we obtain the statement.
\end{proof}
We will prove the following proposition.

\begin{prop}\label{inj2}
The homomorphism $\rho_2 : \Aut(\D_X, \sigma) \to  \mathrm{O}_{\mathrm{Hodge}}(H^*(\D_X,\ZZ), P_\sigma)$ is injective. Equivalently, if $\Phi \in \Aut^{\FM}(\D_X)$ satisfies $\Phi\sigma=\sigma$ and $\Phi^H=\mathrm{id}$, we have $\Phi=\mathrm{id}_{\D_X}$ in $\Aut(\D_X,\sigma)$.
\end{prop}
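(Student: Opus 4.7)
Suppose $\Phi \in \Aut^{\FM}(\D_X)$ satisfies $\Phi\sigma = \sigma$ and $\Phi^H = \mathrm{id}$. The plan is (i) to use $\Phi$'s action on point-like $\sigma$-stable objects to produce an automorphism $g \in \Aut(X)$ acting trivially on $H^4(X, \ZZ)$, (ii) to invoke the Torelli theorem for cubic fourfolds to conclude $g = \mathrm{id}_X$, and (iii) to promote the resulting pointwise triviality of $\Phi$ to an isomorphism $\Phi \simeq \mathrm{id}_{\D_X}$ of functors.

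For step (i), one notes that $\Phi\sigma=\sigma$ and $\Phi^H=\mathrm{id}$ together force $\Phi$ to preserve $\sigma$-stability, phase and Mukai vector. The objects $P_x := \mathrm{pr}(\OO_x) \in \D_X$ for $x \in X$ are $\sigma_\alpha$-stable by the BLMS analysis, share a common Mukai vector $v_0 \in \widetilde{H}^{1,1}(\D_X, \ZZ)$, and are pairwise non-isomorphic by Lemma \ref{pr}. The Fourier-Mukai kernel representing $\mathrm{pr}$ applied to $\OO_\Delta$ yields a universal family for $\{P_x\}_{x \in X}$, realizing $X$ as a component of the moduli of $\sigma$-stable objects of class $v_0$. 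Composing this family with the Fourier-Mukai kernel $\E \in D^b(X \times X)$ representing $\Phi$ produces a second $X$-family of $\sigma$-stable objects of the same class, and by universality of the moduli this furnishes a morphism $g: X \to X$ with $\Phi(P_x) \simeq P_{g(x)} = g_*(P_x)$, which is an automorphism of $X$.

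For step (ii), the cohomological action of $g_* \in \Aut^{\FM}(\D_X)$ on $H^*(\D_X, \ZZ)$ corresponds, via Proposition \ref{prim} and the Hodge isometry $v_X: A_2^\perp \iso H^4_{\mathrm{prim}}(X, \ZZ)(-1)$, to the pullback $g^*$ on $H^4_{\mathrm{prim}}(X, \ZZ)$. Since $\Phi(P_x) \simeq g_*(P_x)$ and $\Phi^H = \mathrm{id}$, the autoequivalence $g_*^{-1} \circ \Phi$ fixes each $P_x$ up to isomorphism, and its cohomological action $(g^*)^{-1}$ must fix every Mukai-theoretic class arising from the family $\{P_x\}$; since these classes span $A_2^\perp$, this forces $g^*$ to act trivially on $H^4_{\mathrm{prim}}(X, \ZZ)$. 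The global Torelli theorem for cubic fourfolds (the faithful action of $\Aut(X)$ on $H^4(X, \ZZ)$) then gives $g = \mathrm{id}_X$, and hence $\Phi(P_x) \simeq P_x$ for every $x$.

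Step (iii) is the main obstacle: one must upgrade the pointwise statement $\Phi(P_x) \simeq P_x$ together with $\Phi^H = \mathrm{id}$ to a natural isomorphism of functors. Since $\{P_x\}_{x \in X}$ is a spanning class of $\D_X$, one approach is a Bondal-Orlov-style uniqueness argument applied to the kernel $\E$ defining $\Phi$, using the Fano structure of $X$, to pin down $\E$ up to an ambiguity in $\langle \OO_X, \OO_X(1), \OO_X(2) \rangle$ which is killed by the Kuznetsov projection $\mathrm{pr}$. A more intrinsic alternative is to use the covering map $\pi_X: \Stab^*(\D_X) \to \P^+_0(\D_X)$ from \cite{BLMS}, \cite{BLMNPS}: since $\Phi^H = \mathrm{id}$, the functor $\Phi$ preserves fibers of $\pi_X$ and acts as a deck transformation on the connected cover $\Stab^*(\D_X)$, and the fixed point $\sigma$ forces this deck transformation to be the identity; combined with a cubic-fourfold analog of Theorem \ref{BriK3} identifying $\Aut^0(\D_X)$ with the deck group, this yields $\Phi \simeq \mathrm{id}_{\D_X}$.
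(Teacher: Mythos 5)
Your overall strategy (extract a geometric automorphism from the action of $\Phi$ on a family of stable objects, kill it with a Torelli theorem, then upgrade the pointwise statement to $\Phi\simeq\mathrm{id}_{\D_X}$) parallels the paper's, but the proof has two genuine gaps, one of which is exactly the crux of the proposition. First, in step (i) you assert that the objects $\mathrm{pr}(\OO_x)$ are $\sigma_\alpha$-stable ``by the BLMS analysis'' and that $X$ is thereby a component of a moduli space of stable objects; neither claim is established in the literature the paper relies on, and the paper avoids this by working instead with the objects $E_l$ attached to lines, for which Li--Pertusi--Zhao (Theorem \ref{Fano}) prove that $M_{\sigma_\alpha}(\lambda_1+\lambda_2)$ is the Fano variety $F(X)$; the induced automorphism is then killed by the faithfulness of $\Aut(F(X))\to\mathrm{O}(H^2(F(X),\ZZ))$ (Theorem \ref{ker}) rather than by the Torelli theorem for $X$, and the residual line-bundle ambiguity in the universal family is eliminated using Verbitsky's injectivity of $\mathrm{Sym}^2H^2\to H^4$. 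Your step (ii) is also garbled as written: the classes $v(P_x)$ all coincide, so they do not span $A_2^\perp$; what one actually compares are the cohomological kernels of the two universal families, which is the computation the paper carries out.

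The more serious problem is step (iii), which you correctly identify as the main obstacle but do not resolve. Option (a) is not carried out, and option (b) is circular: a deck transformation with a fixed point on the connected cover $\Stab^*(\D_X)$ is indeed the identity on $\Stab^*(\D_X)$, but this only says that $\Phi$ fixes every stability condition, not that $\Phi\simeq\mathrm{id}_{\D_X}$ as a functor. The ``cubic-fourfold analog of Theorem \ref{BriK3}'' you invoke --- injectivity of $\Aut^0(\D_X)\to\mathrm{Deck}(\pi_X)$ --- is precisely equivalent to Proposition \ref{inj2} and is deduced from it in the paper as a corollary; Bridgeland's proof of the K3 case uses that skyscrapers and line bundles on $S$ can be recognized intrinsically, an argument with no direct analogue in the non-geometric category $\D_X$. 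The paper's replacement for this step is the comonadic descent argument: from $\Phi\circ P_*\simeq P_*$ and $P^*\circ\Phi\simeq P^*$ one sees that $\Gamma_P\circ\Phi$ is a comparison functor for the adjunction $(P^*\dashv P_*)$, and since the unit $\mathrm{id}\to P_*\circ P^*$ is a split mono (Addington's $\Sigma_R\circ\Sigma\simeq\mathrm{id}\oplus[-2]$), Elagin's theorem makes $\Gamma_P$ an equivalence, so uniqueness of comparison functors forces $\Phi\simeq\mathrm{id}_{\D_X}$. This is the idea missing from your proposal.
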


To prove Proposition \ref{inj2}, we study the Fano scheme $F(X)$ of lines on $X$.

\subsection{Fano schemes of lines}
In this subsection, we recall the description of Fano schemes of lines on cubic fourfolds in terms of moduli spaces of stable objects in Kuznetsov components. 

Let $X$ be a cubic fourfold.  The Fano scheme $F(X)$ of lines on $X$ can be described by the moduli theory on the Kuznetsov component $\D_X$. 
For a line $l$ on $X$, there is the exact sequence
\[    0 \to F_l \to \OO_X^{\oplus4} \to I_l(1) \to 0,\]
where $I_l$ is the ideal sheaf of $l$ in $X$.
Then we have $i^*(\O_l(1)) \simeq F_l[2]$. 
(Lemma 5.1 in \cite{KM})
We consider the autoequivalence $\Xi:= \mathbf{R}_{\mathcal{O}_X(-1)}(-\otimes \O_X(-1)) \in \Aut^{\mathrm{FM}}(\D_X)$ on $\D_X$. 
For a line $l \in F(X)$, we define the object $E_l \in \D_X$ as $E_l:=\Xi(F_l)$. For a line $l \in F(X)$, the object $E_l$ fits into the exact triangle $\OO_X(-1)[1] \to E_l \to I_l$ (\cite{MS}, Section 2.3). Li, pertusi and Zhao proved the following theorem.

\begin{thm}[\cite{LPZ}]\label{Fano}
Take a real number $0<\alpha<1/4$. Then we have an isomorphism $u: F(X) \iso M_{\sigma_\alpha}(v)$ given by 
$u([l]):=E_l$, where $v:=\lambda_1+\lambda_2$.
\end{thm}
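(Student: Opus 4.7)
The plan is to define $u$ from a universal family over $F(X)$ via the moduli property of $M_{\sigma_\alpha}(v)$, show it is bijective on points, and conclude from smoothness of both sides. First I would verify that $v(E_l) = \lambda_1 + \lambda_2 = v$ independently of $l$: from the triangle $\O_X(-1)[1] \to E_l \to I_l$ one has $[E_l] = [I_l] - [\O_X(-1)]$ in $K_0(X)$, and combined with $[F_l] = \lambda_1$ (since $i^*\O_l(1) \simeq F_l[2]$ and $F_l \in \D_X$), a short K-theoretic computation identifies the class. The Gram matrix of $A_2$ then gives $(v,v) = 2$.

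Second, I would prove each $E_l$ is $\sigma_\alpha$-stable by going through the Clifford algebra picture: transport $E_l$ to $D^b(\PP^3, \B^l_0)$ via $\widetilde{\Psi}_l$, check that the image lies in $\mathrm{Coh}^0_{\alpha,-1}(\PP^3,\B^l_0) \cap \widetilde{\Psi}_l(\D_X)$, and exclude destabilizing subobjects by analyzing the twisted Chern character $\chb$ against the central charge $Z_{\alpha,-1}$ for $0<\alpha<1/4$. The numerical restrictions leave no room for a proper subobject of the same phase, and here the assumption $\alpha < 1/4$ is used in the standard way to keep $Z_{\alpha,-1}$ in the tractable region.

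Third, I would assemble $u$ from the universal line $\L \subset F(X) \times X$: the relative analogue of the defining sequence $0 \to F_l \to \O_X^{\oplus 4} \to I_l(1) \to 0$ produces a flat family $\mathcal{F}$ over $F(X)$ with fibres $F_l$, to which the relative version of $\Xi$ yields an object $\mathcal{E} \in D^b(F(X) \times X)$ with fibres $E_l$; by the moduli-theoretic property of $M_{\sigma_\alpha}(v)$ established in \cite{BLMNPS}, this family induces the morphism $u$. Injectivity on points is then immediate: the semi-orthogonal structure of $D^b(X)$ forces $\O_X(-1)[1] \to E_l \to I_l$ to be canonical, so $I_l$ and hence $l$ is recovered from $E_l$.

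The main obstacle is surjectivity: given a $\sigma_\alpha$-stable $E$ with $v(E) = v$, one must exhibit a line $l$ with $E \simeq E_l$. My approach would be to use the Mukai pairing together with stability to force $\mathrm{hom}(\O_X(-1)[1], E) \geq 1$, take a nonzero map, and identify its cone with the ideal sheaf $I_l$ of some line by analyzing its Chern character and invoking stability to rule out the other potential subquotients of class $v$. Once bijectivity is in hand, the isomorphism is automatic: both $F(X)$ (by Beauville--Donagi) and $M_{\sigma_\alpha}(v)$ (as $v$ is primitive with $(v,v) = 2$ in the $2$-Calabi--Yau category $\D_X$) are smooth projective hyperk\"ahler fourfolds, and a direct computation of $\Ext^1_{\D_X}(E_l, E_l) \simeq H^0(l, N_{l/X})$ from the defining triangle shows that $u$ is \'etale, so Zariski's Main Theorem concludes.
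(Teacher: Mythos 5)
This theorem is not proved in the paper at all: it is imported verbatim from \cite{LPZ}, so there is no internal proof to compare against. Measured against the actual argument in \cite{LPZ}, your outline tracks the right strategy — compute $v(E_l)=\lambda_1+\lambda_2$, prove stability through the conic-fibration/Clifford-algebra model, build $u$ from a relative family, and then upgrade a pointwise statement to an isomorphism of varieties. Two caveats. First, the genuine technical content is the stability of $E_l$, which you compress into ``the numerical restrictions leave no room for a proper subobject''; in \cite{LPZ} this is where essentially all the work happens (one must control walls for the weak stability conditions on $\mathrm{Coh}^\beta(\PP^3,\B^l_0)$ and also verify that $\sigma_\alpha$ is $v$-generic, i.e.\ that there are no strictly semistable objects of class $v$ — without that, $M_{\sigma_\alpha}(v)$ is not yet known to be a smooth projective hyperk\"ahler fourfold and the results of \cite{BLMNPS} you invoke do not apply).

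Second, your surjectivity step contains a genuine error as written: you propose to use ``the Mukai pairing together with stability to force $\mathrm{hom}(\O_X(-1)[1],E)\geq 1$,'' but $\O_X(-1)$ does not lie in $\D_X$, and the Mukai pairing on $H^*(\D_X,\ZZ)$ only computes Euler characteristics of pairs of objects of $\D_X$; any such argument must instead be phrased via $\chi(\lambda_1,v)$ and the mutation $\Xi^{-1}$, and even then identifying the cone with an ideal sheaf of a line is delicate. Fortunately the whole step is unnecessary. Once you have an injective morphism $u\colon F(X)\to M_{\sigma_\alpha}(v)$ between smooth projective varieties of the same dimension $4$, with the target irreducible (it is an irreducible holomorphic symplectic fourfold by \cite{BLMNPS}, granting $v$-genericity), the image is closed of full dimension, hence all of $M_{\sigma_\alpha}(v)$; $u$ is then a bijective morphism onto a normal variety and Zariski's main theorem gives the isomorphism without any \'etaleness computation. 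I would restructure the last part of your argument along these lines and spend the saved effort on the stability and genericity verifications.
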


Denote the Fano scheme of lines on $X$ by $F(X)$. For the universal line 
\[ \F(X):=\{ (x, l) \in X \times F(X) \mid x \in l\}\]
 on $X$, consider the natural projections $p: \F(X) \to F(X)$ and $q: \F(X) \to X$. 
We define an exact functor $\Sigma: \D_X \to D^b(F(X))$ as $\Sigma:=\mathbf{R}p_* \circ \mathbf{L}q^* \circ i$. Denote the right adjoint functor of $\Sigma$ by $\Sigma_R: D^b(F(X)) \to \D_X$.  %Addington \cite{Add} proved the following theorem.
%\begin{thm}[\cite{Add}]\label{RF}
%The unit $\eta: \mathrm{id}_{\D_X} \to \Sigma_R \circ \Sigma$ is a split mono and $\Sigma_R \circ \Sigma \simeq \mathrm{id} \oplus [-2]$.
%\end{thm} 
Addington proved the following proposition.
\begin{prop}[Section 5.1 in \cite{Add}]\label{for universal family}
For $[l] \in F(X)$, we have $\Sigma_R(\mathcal{O}_l(1)) \simeq F_l[1]$ 
\end{prop}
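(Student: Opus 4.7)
\emph{Proof plan.} The strategy is to unwind $\Sigma_R$ via adjunctions and then evaluate on the prescribed input supported at $[l]$. From $\Sigma = \mathbf{R}p_* \circ \mathbf{L}q^* \circ i$, taking right adjoints in reverse order gives
\[\Sigma_R \simeq i^! \circ \mathbf{R}q_* \circ p^!,\]
where $i^!$ denotes the right adjoint of the inclusion $i\colon \D_X \hookrightarrow D^b(X)$.

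For the $p^!$ step: since $p\colon \F(X) \to F(X)$ is a smooth $\PP^1$-bundle, $p^!(-) \simeq p^*(-) \otimes \omega_p[1]$. Over the point $[l]$, the fiber $p^{-1}([l])$ is the line $l \simeq \PP^1 \subset \F(X)$, and $\omega_p$ restricts there to $\O_l(-2)$; applied to the skyscraper-supported object at $[l]$ this produces $\O_l(-2)[1]$ supported on the fiber. Next, the restriction of $q$ to $p^{-1}([l])$ is exactly the closed embedding $l \hookrightarrow X$, so $\mathbf{R}q_*$ acts trivially and merely reinterprets $\O_l(-2)[1]$ as a complex in $D^b(X)$ supported on the line $l \subset X$.

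For the $i^!$ step: because $\D_X$ is $2$-Calabi-Yau one has $S_{\D_X} \simeq [2]$, while $S_{D^b(X)} \simeq (-)\otimes \O_X(-3)[4]$. The standard compatibility of Serre functors with admissible inclusions, namely $i^! \simeq S_{\D_X} \circ i^* \circ S_{D^b(X)}^{-1}$, reduces $i^!$ to $i^*$ via the identity
\[i^!(F) \simeq i^*\bigl(F \otimes \O_X(3)\bigr)[-2].\]
Applied to $F = \O_l(-2)$, and using $\O_X(3)|_l \simeq \O_l(3)$, this becomes $i^*(\O_l(1))[-2]$. The identity $i^*(\O_l(1)) \simeq F_l[2]$ recalled from \cite{KM} then yields $i^!(\O_l(-2)) \simeq F_l$, and combining with the residual $[1]$ from the $p^!$ step produces $\Sigma_R(\O_l(1)) \simeq F_l[1]$, as claimed.

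The only delicate point is the bookkeeping of shifts: the $[1]$ from the relative dualizing sheaf of the $\PP^1$-bundle, the $[-2]$ from the discrepancy between the Serre functors of $\D_X$ and $D^b(X)$, and the $[2]$ inside $i^*(\O_l(1)) \simeq F_l[2]$ must telescope correctly to the single $[1]$. Once the projective-bundle formula for $p^!$ and the Serre-duality identification of $i^!$ are in place, the rest of the argument is formal.
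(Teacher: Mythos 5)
Your computation is correct, and since the paper gives no proof of this proposition (it is quoted from Section 5.1 of \cite{Add}), your argument via $\Sigma_R \simeq i^! \circ \mathbf{R}q_* \circ p^!$, Grothendieck duality for the $\mathbb{P}^1$-bundle $p$, and the identity $i^! \simeq S_{\D_X}\circ i^*\circ S_{D^b(X)}^{-1}$ (giving $i^!(F)\simeq i^*(F\otimes\O_X(3))[-2]$ since $\omega_X\simeq\O_X(-3)$ and $\dim X=4$) is exactly the standard one, and all the shifts telescope as you say: $[1]-[2]+[2]=[1]$. One point worth making explicit: the input in the statement is written ``$\O_l(1)$'' but, as $\Sigma_R$ is defined on $D^b(F(X))$, it can only mean the skyscraper sheaf $\O_{[l]}$ of the point $[l]\in F(X)$ (this is also what the later application in Remark \ref{univP}, identifying $\U$ as a universal family, requires); you silently and correctly adopt this reading when you apply $p^!$ to ``the skyscraper-supported object at $[l]$,'' which then yields $\O_{p^{-1}([l])}(-2)[1]$ and, after pushing forward along the closed immersion $q|_{p^{-1}([l])}\colon l\hookrightarrow X$ and applying $i^!$ together with $i^*(\O_l(1))\simeq F_l[2]$ from \cite{KM}, gives $F_l[1]$ as claimed.
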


We modify the adjoint functors $\Sigma$ and $\Sigma_R$ for this situation.

\begin{dfn}
Take a real number $0<\alpha<1/4$ and put $\sigma:=\sigma_\alpha$.  We define the exact functors $P^*: \D_X \to D^b(M_\sigma(v))$ and $P_*: D^b(M_\sigma(v)) \to \D_X$ by 
\[P^*:=u_* \circ \Sigma \circ \Xi^{-1} \circ [1]: \D_X \to D^b(M_\sigma(v)),  \]
\[P_*:=[-1] \circ \Xi \circ \Sigma_R \circ u^*: D^b(M_\sigma(v)) \to \D_X. \]
Then $P_*$ is the right adjoint functor of $P^*$.
\end{dfn}

The above functors are related to an universal family of the moduli space $M_\sigma(v)$.

\begin{rem}\label{univP}
Since $\Xi \in \Aut^{\mathrm{FM}}(\D_X)$, the composition $i \circ P_*: D^b(M_\sigma(v)) \to D^b(X)$ is a Fourier-Mukai functor, that is, there is an object $\U \in D^b(M_\sigma(v) \times X)$ such that $i \circ P_* \simeq \Phi_\U$. By Proposition \ref{for universal family}, the object $\U$ is an universal family of the moduli space $M_\sigma(v)$ over $X$.
\end{rem}

We will need the following result about kernels of actions of automorphism groups on cohomology groups for irreducible holomorphic symplectic manifolds.

\begin{thm}[Theorem 2.1 in \cite{Bea}, Proposition 10 in \cite{HT}]\label{ker}
Take a positive integer $n>0$.  Let $M$ be an irreducible holomorphic symplectic manifold deformation equivalent to Hilbert scheme of $n$-points on K3 surfaces. Consider the group homomorphism 
\[\rho: \Aut(M) \to \mathrm{O}(H^2(M,\ZZ)), f \mapsto f^*.\]
Then we have $\mathrm{Ker}(\rho)=1$.
\end{thm}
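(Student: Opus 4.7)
The plan is to start with any $f \in \mathrm{Ker}(\rho)$ and prove $f = \mathrm{id}_M$ in three steps: first reduce to $f$ being a symplectic automorphism of finite order, then apply the holomorphic Lefschetz fixed point formula to control the fixed locus $M^f$, and finally use the structure of fixed loci of symplectic automorphisms to force $M^f = M$.

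First, since $f^*$ acts trivially on $H^2(M,\ZZ)$, it preserves the Hodge decomposition on $H^2$ and in particular fixes the symplectic class $\sigma \in H^{2,0}(M)$, so $f$ is symplectic. Moreover $f^*$ fixes every class in $H^{1,1}(M,\RR)$, including any K\"ahler class, so in the projective case $f$ is a polarized automorphism and hence has finite order, while the non-projective case reduces to this via a twistor deformation carrying $f$ along. Next, the holomorphic Lefschetz fixed point theorem applies: since $f^*\sigma = \sigma$, one has $f^* = \mathrm{id}$ on each $H^{0,2k}(M) = \CC \cdot \bar{\sigma}^{k}$ while $H^{0,2k+1}(M) = 0$, so the holomorphic Lefschetz number equals $\chi(\O_M) = n+1 > 0$ and in particular $M^f$ is nonempty. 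Combining this with the topological Lefschetz formula, Verbitsky's theorem on the subalgebra of $H^*(M,\QQ)$ generated by $H^2$, and the Fujiki relation applied to $f^* = \mathrm{id}_{H^2}$, one obtains the global Euler characteristic equality $\chi_{\mathrm{top}}(M^f) = \chi_{\mathrm{top}}(M)$.

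Finally, a finite-order symplectic automorphism of an irreducible holomorphic symplectic manifold has a smooth symplectic fixed locus, so the Euler characteristic equality above forces $M^f = M$, whence $f = \mathrm{id}_M$. The main obstacle is propagating the triviality of $f^*$ from $H^2$ to the full cohomology ring $H^*(M,\QQ)$: the Verbitsky subalgebra generated by $H^2$ is not a priori all of $H^*(M,\QQ)$, so one must invoke the refined control given by Markman's description of the monodromy representation of $M$, and this is precisely where the hypothesis that $M$ is deformation equivalent to the Hilbert scheme of $n$ points on a K3 surface becomes essential.
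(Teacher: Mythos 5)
This theorem is not proved in the paper: it is imported from Beauville and Hassett--Tschinkel, and the argument in those references runs along entirely different lines from yours. There one first shows $\mathrm{Ker}(\rho)$ is finite (its elements fix a K\"ahler class and $H^0(M,T_M)=0$), then that $\mathrm{Ker}(\rho)$ is a deformation invariant of $M$ (Huybrechts), and finally one computes it to be trivial on $S^{[n]}$ for a generic K3 surface $S$, where every automorphism is induced from $\Aut(S)$ and the Torelli theorem for $S$ finishes the job. Your first two steps are essentially correct: reducing to a finite-order symplectic $f$, and using the holomorphic Lefschetz formula with $\sum_q(-1)^q\mathrm{tr}(f^*|_{H^{0,q}})=\chi(\OO_M)=n+1$ to conclude $M^f\neq\emptyset$, are both standard.

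The remainder, however, has two genuine gaps. First, passing from $f^*=\mathrm{id}$ on $H^2(M,\QQ)$ to $f^*=\mathrm{id}$ on $H^*(M,\QQ)$ is not a formality: $H^*(M,\QQ)$ is not generated by $H^2$ for $K3^{[n]}$-type once $n\geq 3$, and the Markman monodromy result you gesture at (faithfulness of $\mathrm{Mon}(M)$ on $H^2$ for $K3^{[n]}$-type, plus the fact that $f^*$ is a parallel-transport operator) is a theorem at least as deep as the statement you are trying to prove, so invoking it without statement or proof leaves a real hole. Second, and fatally, even granting $\chi_{\mathrm{top}}(M^f)=\chi_{\mathrm{top}}(M)$ and that $M^f$ is a smooth symplectic submanifold, the implication $\chi_{\mathrm{top}}(M^f)=\chi_{\mathrm{top}}(M)\Rightarrow M^f=M$ does not hold: the fixed locus is a disjoint union of closed symplectic submanifolds of various even dimensions, and nothing prevents a proper such union from having total Euler characteristic equal to $\chi_{\mathrm{top}}(M)$ (Euler characteristics of submanifolds are not monotone in any useful sense). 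To conclude $M^f=M$ you would need, say, a component of $M^f$ of full dimension $2n$, and the Lefschetz numbers alone do not produce one. I would recommend abandoning the fixed-point route and writing up the finiteness--deformation-invariance--Torelli argument instead.
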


Let $\Sigma^H: H^*(\D_X, \QQ) \to H^*(F(X),\QQ)$ be the linear map induced by $\Sigma: \D_X \to D^b(F(X))$.  

\begin{prop}\label{AJ}
The restriction $\Sigma^H|_{A_2^\perp} A_2^\perp \iso H^2_{\mathrm{prim}}(F(X),\ZZ)$ is a Hodge isometry, where $H^2_{\mathrm{prim}}(F(X),\ZZ)$ is the primitive cohomology group of $F(X)$ with respect to the Pl\"ucker polarization of $F(X)$. 
\end{prop}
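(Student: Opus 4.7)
The plan is to reduce Proposition \ref{AJ} to the classical Beauville--Donagi theorem, which asserts that the Abel--Jacobi map $\mathrm{AJ}=p_*q^*$ induces a Hodge isometry $H^4_{\mathrm{prim}}(X,\ZZ)(-1) \iso H^2_{\mathrm{prim}}(F(X),\ZZ)$, where the target carries the Beauville--Bogomolov form and the primitive cohomology is taken with respect to the Pl\"ucker polarization.

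First I would identify the composition $i\circ\Sigma_R$ (or rather $\Sigma$ viewed on $D^b(X)$ via $i$) as a Fourier--Mukai functor whose kernel is essentially the structure sheaf $\O_{\F(X)}$ of the universal line in $X\times F(X)$. Therefore, for $\alpha\in H^*(\D_X,\ZZ)\subset K_{\mathrm{top}}(X)$, one has
\[
\Sigma^H(\alpha)=p_*\bigl(q^*v_X(\alpha)\cdot v_{X\times F(X)}(\O_{\F(X)})\bigr).
\]

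Next I would restrict to $\alpha\in A_2^\perp$. By Proposition \ref{prim}, $v_X(\alpha)$ lies in pure degree $4$ and is primitive in $H^4(X,\ZZ)$. Since $p\colon \F(X)\to F(X)$ has one-dimensional fibers and $\dim F(X)=4$, the pushforward $p_*$ of a degree $4$ class lands in $H^2(F(X))$, so only the rank-one component of $v_{X\times F(X)}(\O_{\F(X)})$ contributes to $\Sigma^H(\alpha)$ for degree reasons. The remaining Todd contributions either shift the degree away from $2$ or, after pushforward, are supported on multiples of the Pl\"ucker class and hence annihilated by pairing with primitive elements. Thus $\Sigma^H(\alpha)=p_*q^*v_X(\alpha)=\mathrm{AJ}(v_X(\alpha))$ up to a sign, and lands in $H^2_{\mathrm{prim}}(F(X),\ZZ)$ because orthogonality with $H^2\in H^4(X,\ZZ)$ translates via the projection formula into orthogonality with the Pl\"ucker class $p_*q^*H^2$ (up to scalar).

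Finally, I would combine this identification with Beauville--Donagi to conclude: the composition
\[
A_2^\perp\xrightarrow{v_X}H^4_{\mathrm{prim}}(X,\ZZ)(-1)\xrightarrow{\mathrm{AJ}}H^2_{\mathrm{prim}}(F(X),\ZZ)
\]
is a Hodge isometry by Proposition \ref{prim} (for the first arrow) and by Beauville--Donagi (for the second), and it coincides with $\Sigma^H|_{A_2^\perp}$. The main obstacle is the bookkeeping for the cohomological Fourier--Mukai transform: verifying that the Todd-class corrections in $v_{X\times F(X)}(\O_{\F(X)})$ do not interfere, and that the lattice-level normalization (sign conventions and comparison between the Mukai pairing on $A_2^\perp$ and the intersection form on $H^4_{\mathrm{prim}}(X,\ZZ)(-1)$) is compatible with the one used by Beauville--Donagi. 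Once these are checked, the Hodge isometry property is immediate from functoriality of the cohomological Fourier--Mukai transform.
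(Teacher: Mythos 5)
Your proposal is correct and follows essentially the same route as the paper: the paper's proof likewise identifies $\Sigma^H|_{A_2^\perp}$ with the composition $p_*q^*\circ v_X$ and concludes by combining Proposition \ref{prim} with the Beauville--Donagi Hodge isometry for the Abel--Jacobi map. The extra bookkeeping you flag (Todd-class terms not contributing in degree $2$, since $v_X(\alpha)$ is pure of degree $4$ for $\alpha\in A_2^\perp$) is exactly what the paper's terse phrase ``by the definition of $\Sigma$'' is implicitly relying on.
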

\begin{proof}
The Abel-Jacobi map $p_*q^*: H^4_{\mathrm{prim}}(X,\ZZ)(-1) \iso H^2_{\mathrm{prim}}(F(X),\ZZ)$ is the Hodge isometry. By Proposition \ref{prim} and the definition of $\Sigma$, we have $\Sigma^H_{{A_2}^\perp}=p_*q^*v$.
\end{proof}

\begin{lem}
For an autoequivalence $\Phi \in \mathrm{Ker}(\rho_2)$, we have isomorphisms $\Phi \circ P_* \simeq P_*$ and $\P^* \circ \Phi \simeq P^*$.
\end{lem}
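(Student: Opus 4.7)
The plan is to show that $\Phi \in \mathrm{Ker}(\rho_2)$, which by definition satisfies $\Phi\sigma = \sigma$ and $\Phi^H = \mathrm{id}$ on $H^*(\D_X, \ZZ)$, induces the identity on the fine moduli space $M_\sigma(v)$, and then to upgrade this pointwise identification to a functorial isomorphism by comparing Fourier-Mukai kernels. Since $\Phi$ preserves $\sigma$-stability and satisfies $\Phi^H = \mathrm{id}$, it sends each $\sigma$-stable object of Mukai vector $v = \lambda_1 + \lambda_2$ to another such object. Hence $\Phi$ induces an automorphism $\phi \in \Aut(M_\sigma(v))$ defined by $\phi([E_l]) = [\Phi(E_l)]$, and via the isomorphism $u: F(X) \iso M_\sigma(v)$ of Theorem \ref{Fano} this yields $\widetilde{\phi} \in \Aut(F(X))$.

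Next I would argue that $\widetilde{\phi}^* = \mathrm{id}$ on $H^2(F(X), \ZZ)$. On the primitive part this follows at once from Proposition \ref{AJ}: the Hodge isometry $\Sigma^H|_{A_2^\perp} : A_2^\perp \iso H^2_{\mathrm{prim}}(F(X), \ZZ)$ intertwines $\Phi^H|_{A_2^\perp}$ with $\widetilde{\phi}^*|_{H^2_{\mathrm{prim}}}$, so the hypothesis $\Phi^H = \mathrm{id}$ forces $\widetilde{\phi}^*|_{H^2_{\mathrm{prim}}} = \mathrm{id}$. For the complementary direction, I would use the Donaldson/Mukai morphism $v^\perp \to H^2(M_\sigma(v), \ZZ)$ intertwining $\Phi^H|_{v^\perp}$ with $\phi^*$; the Pl\"ucker polarization on $F(X)$ corresponds to a class proportional to $\lambda_1 - \lambda_2 \in A_2 \cap v^\perp$ that $\Phi^H$ fixes, so $\widetilde{\phi}$ preserves it. Since $F(X)$ is a hyperk\"ahler fourfold of $K3^{[2]}$-type, Theorem \ref{ker} then yields $\widetilde{\phi} = \mathrm{id}_{F(X)}$, i.e., $\Phi(E_l) \simeq E_l$ for every line $l$ on $X$.

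Finally, to promote this pointwise identification to a functorial isomorphism $\Phi \circ P_* \simeq P_*$, I would work at the level of Fourier-Mukai kernels. By Remark \ref{univP}, $i \circ P_*$ has kernel the universal family $\U$, and $i \circ \Phi \circ P_* \simeq \Phi_\E \circ \Phi_\U$ is again Fourier-Mukai with some kernel $\U'$ whose fibers satisfy $\U'_m \simeq \Phi(E_m) \simeq E_m$ for every $m \in M_\sigma(v)$. By the uniqueness of universal families on a fine moduli space up to twisting by a line bundle pulled back from the base, $\U' \simeq \U \otimes p^*L$ for some $L \in \mathrm{Pic}(M_\sigma(v))$. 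The main technical obstacle is showing $L \simeq \O$; this should be forced by comparing the induced cohomological Fourier-Mukai transforms, where the identity $\Phi^H = \mathrm{id}$ yields the vanishing of the Chern character contribution of $L$. Once this is established, the companion statement $P^* \circ \Phi \simeq P^*$ follows from the adjunction $P^* \dashv P_*$ and the uniqueness of left adjoints applied to $\Phi \circ P_*$.
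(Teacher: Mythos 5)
Your strategy coincides with the paper's: show via Theorem \ref{ker} that the induced automorphism of $M_\sigma(v)$ is the identity, deduce that the relevant Fourier--Mukai kernels differ by a line bundle $L$ pulled back from the base, and then kill $L$. The first two stages are carried out correctly (the paper phrases the triviality of the action on $H^2$ through the Mukai homomorphism $\theta_{\sigma,v}\colon v^\perp \to H^2(M_\sigma(v),\ZZ)$ rather than splitting into the primitive part and the Pl\"ucker class, but this is the same argument), and the final adjunction step giving $P^*\circ\Phi\simeq P^*$ from $\Phi\circ P_*\simeq P_*$ is fine.

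The genuine gap is the step you yourself flag as ``the main technical obstacle'': proving $L\cong \O$. Asserting that $\Phi^H=\mathrm{id}$ ``yields the vanishing of the Chern character contribution of $L$'' is not yet an argument. What the hypothesis actually gives is the identity $e^L\cdot (P^*)^H(w)=(P^*)^H(w)$ for all $w$, i.e. $(e^L-1)\cdot x=0$ for every $x$ in the image of $(P^*)^H$; to extract $L=0$ one must know something concrete about that image. The paper's resolution is: by Proposition \ref{AJ} the image contains a non-zero class $D\in H^2(M_\sigma(v))$, so the degree-four component of the identity reads $L\cdot D=0$ in $H^4(M_\sigma(v),\CC)$, and since $M_\sigma(v)\simeq F(X)$ is of $K3^{[2]}$-type, Verbitsky's theorem that $\mathrm{Sym}^2\bigl(H^2(M_\sigma(v),\CC)\bigr)\to H^4(M_\sigma(v),\CC)$ is injective forces $L=0$ because $D\neq 0$. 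Without an input of this kind --- either Verbitsky's injectivity applied to a degree-two class in the image, or the exhibition of a class in the image with non-vanishing $H^0$-component --- the conclusion $L\cong\O$ does not follow, so you should supply this step explicitly.
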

\begin{proof}
Let $\Phi  \in \Aut^{\FM}(\D_X)$ be an autoequivalence satisfying $\Phi\sigma=\sigma$ and $\Phi^H=\mathrm{id}$.
Then $\Phi$ induces an automorphism $\Phi_{\sigma,v}: M_\sigma(v) \iso M_\sigma(v), [E] \mapsto [\Phi(E)]$ such that the following diagram commutes.
\[\xymatrix{v^{\perp} \ar[d]_{\theta_{\sigma,v}} \ar[r]^{\Phi^H} & v^\perp  \ar[d]_{\theta_{\sigma, v}} \\
H^2(M_\sigma(v),\mathbb{Z}) \ar[r]^{{\Phi_{\sigma,v}}_*} & H^2(M_{\sigma}(v),\mathbb{Z})}  \]
Since $\Phi^H=\mathrm{id}$, we have ${\Phi_{\sigma,v}}_*=\mathrm{id}$. By Theorem \ref{ker}, we obtain $\Phi_{\sigma,v}=\mathrm{id}_{M_\sigma(v)}$.  
Since $\Phi$ is of Fourier-Mukai type, there is an object $\E \in D^b(X \times X)$ such that $\Phi \simeq i \circ \Phi_\E \circ i^*$.
By Remark \ref{univP}, we have $\Phi_\U \simeq i \circ P_*$, where $\U \in D^b(M_\sigma(v) \times X)$ is an universal family of $M_\sigma(v)$.
For $[E] \in M_\sigma(v)$, we have
\begin{eqnarray*}
\Phi_\E \circ \Phi_\U(\O_{[E]})
&\simeq&i^* \circ \Phi \circ i(E)\\
&\simeq&E. 
\end{eqnarray*}
The convolution product $\E \circ \U$, that is a Fourier-Mukai kernel of the composition $\Phi_\E \circ \Phi_\U$, is also an universal family of $M_\sigma(v)$. So there is a line bundle $L \in \mathrm{Pic}(M_\sigma(v))$ such that $\E \circ \U \simeq \U \otimes p_M^*L$, where $p_M: M_\sigma(v) \times X \to M_\sigma(v)$ is the projection. Since $i^* \circ i \simeq \mathrm{id}_{\D_X}$ and $\Phi_\E \circ \Phi_\U \simeq \Phi_\U \circ (- \otimes L)$, we obtain $\Phi \circ P_* \simeq P_* \circ (-\otimes L)$. By the uniqueness of left adjoint functors, we have $P^* \circ \Phi^{-1} \simeq (- \otimes L^{-1}) \circ P^*$ and it induces $(-\otimes L) \circ P^* \simeq P^* \circ \Phi$. Let $(P^*)^H: H^*(\D_X,\mathbb{Q}) \to H^*(M_\sigma(v), \mathbb{Q})$ be the linear map induced by $P^*:\D_X \to D^b(M_\sigma(v))$. Then we have $e^L \circ (P^*)^H=(P^*)^H$.
Take a non-zero divisor $D \in \mathrm{NS}(M_\sigma(v))$. By Proposition \ref{AJ}, there is an element $w \in \widetilde{H}^{1,1}(\D_X,\ZZ)$ such that $D=(P^*)^H(w)$. 
Then we have
\begin{eqnarray*}
D
&=& (P^*)^H(w)\\
&=&e^L \cdot  (P^*)^H(w)\\
&=& D+L\cdot D+\frac{1}{2}L^2 \cdot D+\frac{1}{6}L^3 \cdot D  . 
\end{eqnarray*}
By the Verbitsky's result (\cite{Ver}, Theorem 1.3), the natural map $\mathrm{Sym}^2(H^2(M_\sigma(v),\CC)) \to H^4(M_\sigma(v),\CC)$ is injective. So we have $L=0$. Therefore, we have $\Phi \circ P_* \simeq P_*$ and $P^* \circ \Phi \simeq P^*$. 
\end{proof}

\subsection{Comonads}
In this subsection,  we collect definitions and basic properties of comodules over comonads following \cite{elagin1}. Let $\mathcal{C}$ be a category.

\begin{dfn}
A {\it comonad} $\mathbb{T}=(T,\varepsilon,\delta)$ on  $\mathcal{C}$ consists of an endo-functor $T:\mathcal{C}\rightarrow\mathcal{C}$ and morphisms $\varepsilon:T\rightarrow {\rm id}_{\mathcal{C}}$ and $\delta:T\rightarrow T^2$ of functors such that the following diagrams commute.

\[\xymatrix{
&T \ar[r]^{\delta} \ar[d]_{\delta} \ar@{=}[dr]^{{\rm id}_T} & T^2 \ar[d]^{T\varepsilon}&&&T \ar[r]^{\delta} \ar[d]_{\delta} & T^2 \ar[d]^{T\delta}\\
&T^2 \ar[r]^{\varepsilon T} & T && &T^2 \ar[r]^{\delta T} & T^3
}\]

\end{dfn}

From adjoint functors, we can construct comonads. 
\begin{eg}\label{adj}
Let $P=(P^*\dashv P_*)$ be adjoint  functors $P^*:\mathcal{C}'\rightarrow\mathcal{C}$ and $P_*:\mathcal{C}\rightarrow\mathcal{C}'$. Denote the unit and the counit by $\eta_{P} :{\rm id}_{\mathcal{C}'}\rightarrow P_*\circ P^*$ and $\varepsilon_{P}:P^* \circ P_*\rightarrow {\rm id}_{\mathcal{C}}$ respectively.  We have an endo-functor $T_P:=P^*\circ P_*$ and morphisms $\delta_P:=P^*\eta_P P_*$ of functors. Then the triple $\mathbb{T}(P):=(T_P,\varepsilon_{P},\delta_P)$ is a comonad on $\mathcal{C}$.
\end{eg}

For a comonad, we have the notion of comodules over the comonad.

\begin{dfn} \label{comodule}
Let $\mathbb{T}=(T,\varepsilon,\delta)$ be a comonad on $\mathcal{C}$. A {\it comodule} over $\mathbb{T}$ is a pair $(C,\theta_C)$ of an object $C\in\mathcal{C}$ and a morphism $\theta_{C}:C\rightarrow T(C)$ such that 
\begin{itemize}
\item[(1)] $\varepsilon(C)\circ\theta_C={\rm id}_C$, and

\item[(2)]  the following diagram commutes.

$$\begin{CD}
C@>{\theta_C}>>T(C)\\
@V{\theta_C}VV @VVT({\theta_C})V\\
T(C)@>{\delta(C)}>>T^2(C).
\end{CD}$$
\end{itemize}
\end{dfn}

\begin{dfn}
Let $\mathbb{T}=(T,\varepsilon,\delta)$ be a comonad on $\mathcal{C}$.  We define the category $\mathcal{C}_{\mathbb{T}}$ of comodules over $\mathbb{T}$ as follow.
\begin{itemize}
\item[(1)] The set $\mathrm{Ob}(\mathcal{C}_{\mathbb{T}})$ of objects in $\mathcal{C}_{\mathbb{T}}$ consists of comodules over $\mathbb{T}$.

\item[(2)] For comodules $(C_1, \theta_{C_1}),(C_2, \theta_{C_2}) \in \mathrm{Ob}(\mathcal{C}_{\mathbb{T}})$,
\[{\rm Hom}_{\C_{\mathbb{T}}}((C_1,\theta_{C_1}),(C_2,\theta_{C_2})):=\{f\in\Hom_{\C}(C_1,C_2) \mid  T(f)\circ\theta_{C_1}=\theta_{C_2}\circ f \}.\]
\end{itemize}
\end{dfn}

We have the following natural functors.

\begin{dfn}
Let $\mathbb{T}=(T,\varepsilon, \delta)$ be a comonad on $\C$. We define a functor $Q_*: \C \to \C_{\mathbb{T}}$ as follow.
\begin{itemize}
\item[(1)]For an object $C \in \C$, set $Q_*(C):=(T(C), \delta(C))$. 
\item[(2)] For a morphism $f \in \Hom_{\C}(C_1,C_2)$, set $Q_*(f):=T(f)$.
\end{itemize}
We define a functor $Q^*:\C_{\mathbb{T}} \to \C$ as the forgetful functor. Then we have adjoint functors $Q=(Q^*\dashv Q_*)$. 
\end{dfn}

\begin{thm}[3.2.3 in \cite{BW}]\label{comparison functor}
Let $P=(P^*\dashv P_*)$ be adjoint  functors $P^*:\mathcal{C}\rightarrow\mathcal{C}'$ and $P_*:\mathcal{C}'\rightarrow\mathcal{C}$. Then there exists a functor $\Gamma_P: \C' \to \C_{\mathbb{T}(P)}$ unique up to an isomorphism such that $\Gamma_P \circ P_* \simeq Q_*$ and $Q^* \circ \Gamma_P \simeq P^*$. The functor $\Gamma_P: \C' \to \C_{\mathbb{T}(P)}$ is called the comparison functor.
\end{thm}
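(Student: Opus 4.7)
My plan is to construct $\Gamma_P$ explicitly using the unit of the adjunction and then verify both factorization properties via the triangle identities. Following the conventions of Example \ref{adj}, so that $P^* : \C' \to \C$ is left adjoint to $P_* : \C \to \C'$ and the comonad $\mathbb{T}(P)$ lives on $\C$, I would define on objects
\[ \Gamma_P(D) := \bigl(P^*(D),\ P^*\eta_P(D)\bigr), \]
where the coaction lands in $T_P(P^*(D)) = P^*P_*P^*(D)$ as required, and on morphisms $\Gamma_P(f) := P^*(f)$.

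Next I would check the two axioms of Definition \ref{comodule}. The counit axiom $\varepsilon_P(P^*(D)) \circ P^*\eta_P(D) = \mathrm{id}_{P^*(D)}$ is exactly one of the triangle identities for $P^* \dashv P_*$. For coassociativity, I need
\[ \delta_P(P^*(D)) \circ P^*\eta_P(D) \;=\; T_P\bigl(P^*\eta_P(D)\bigr) \circ P^*\eta_P(D), \]
which, after expanding $\delta_P = P^*\eta_P P_*$ and $T_P = P^*P_*$, reduces to applying $P^*$ to the naturality square of $\eta_P$ along the morphism $\eta_P(D) : D \to P_*P^*(D)$. The same naturality of $\eta_P$ shows that $\Gamma_P(f) = P^*(f)$ is a comodule morphism, so $\Gamma_P$ is a well-defined functor.

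The factorization identities are then essentially tautological: $Q^* \circ \Gamma_P = P^*$ because $Q^*$ is the forgetful functor which strips off the comodule structure we just attached, while
\[ \Gamma_P(P_*(C)) = \bigl(P^*P_*(C),\ P^*\eta_P P_*(C)\bigr) = (T_P(C), \delta_P(C)) = Q_*(C) \]
by the very definitions of $T_P$ and $\delta_P$ from Example \ref{adj}, yielding $\Gamma_P \circ P_* \simeq Q_*$.

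For uniqueness, suppose $\Gamma : \C' \to \C_{\mathbb{T}(P)}$ is another functor with the same two properties. From $Q^*\Gamma \simeq P^*$ each $\Gamma(D)$ must have underlying object $P^*(D)$, so $\Gamma(D) = (P^*(D), \theta_D)$ for some coaction $\theta_D$. Applying $\Gamma$ to the unit morphism $\eta_P(D) : D \to P_* P^*(D)$ yields a comodule map into $\Gamma(P_*P^*(D)) \simeq Q_*(P^*(D)) = (T_P(P^*(D)), \delta_P(P^*(D)))$ whose underlying $\C$-morphism is $P^*\eta_P(D)$; the compatibility of this map with coactions, combined with the triangle identity $\varepsilon_P(P^*(D)) \circ P^*\eta_P(D) = \mathrm{id}$, pins down $\theta_D = P^*\eta_P(D)$, so $\Gamma \simeq \Gamma_P$. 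The main obstacle I expect is organizing this last diagram chase cleanly, together with checking that the two isomorphisms $\Gamma \circ P_* \simeq Q_*$ and $Q^*\Gamma \simeq P^*$ interact compatibly via the counit $\varepsilon_P$; all ingredients are formal consequences of naturality and the triangle identities, but the bookkeeping of the various instances of $\eta_P$, $\varepsilon_P$, and the comonad structure maps takes some care.
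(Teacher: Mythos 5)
Your construction is the standard comparison-functor argument and it is correct: the paper offers no proof of its own here (it simply cites [BW, 3.2.3]), and your explicit definition $\Gamma_P(D)=(P^*(D),P^*\eta_P(D))$, the verification of the two comodule axioms via a triangle identity and the naturality of $\eta_P$, and the on-the-nose identities $Q^*\Gamma_P=P^*$, $\Gamma_P P_*=Q_*$ all check out. For the uniqueness step, note that pinning down $\theta_D$ uses not only the triangle identity but also the comonad axiom $T_P\varepsilon_P\circ\delta_P=\mathrm{id}$ (compose the comodule-map equation for $P^*\eta_P(D)$ with $T_P(\varepsilon_P(P^*(D)))$ on the left), and that you have sensibly resolved the paper's internal inconsistency between the variance conventions of Example \ref{adj} and the theorem's statement by following the former.
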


The following proposition gives  sufficient conditions for a comparison functor to be an equivalence.

\begin{prop}[{\cite[Theorem 3.9, Corollary 3.11]{elagin1}}]\label{comparison theorem}
Let $P=(P^*\dashv P_*)$ be adjoint  functors $P^*:\mathcal{C}\rightarrow\mathcal{C}'$ and $P_*:\mathcal{C}'\rightarrow\mathcal{C}$. If $\mathcal{C}$ is idempotent complete and the functor morphism $\eta_P :{\rm id}_{\mathcal{C}}\rightarrow P_* \circ P^*$ is a split mono, i.e. there exists a functor morphism $\zeta:P_* \circ P^*\rightarrow {\rm id}_{\mathcal{C}}$ such that $\zeta\circ\eta={\rm id}$, then $\Gamma_{P}:\mathcal{C}\rightarrow\mathcal{D}_{\mathbb{T}(P)}$ is an equivalence.
\end{prop}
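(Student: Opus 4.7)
The plan is to prove that $\Gamma_P$ is an equivalence by exhibiting an explicit quasi-inverse $\Phi: \C_{\mathbb{T}(P)} \to \C'$, following the classical strategy behind Beck's comonadicity theorem in the setting where the unit splits. Let $\zeta: P_* \circ P^* \to \mathrm{id}_{\C'}$ denote a left inverse of $\eta_P$, so that $\zeta \circ \eta_P = \mathrm{id}$.

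First, I would associate to each comodule $(C, \theta_C) \in \C_{\mathbb{T}(P)}$ a canonical idempotent in $\mathrm{End}_{\C}(C)$ built from the coaction $\theta_C$ and the splitting $\zeta$ via the adjunction — concretely by composing $\theta_C: C \to P^* P_*(C)$ with a morphism obtained from $P^*(\zeta)$ and the counit $\varepsilon_P$. Using the coassociativity and counital axioms of Definition \ref{comodule}, the naturality of $\zeta$, and the triangle identities for $P^* \dashv P_*$, I would verify that the resulting endomorphism $e_{(C,\theta_C)} \in \mathrm{End}_{\C}(C)$ satisfies $e_{(C,\theta_C)}^2 = e_{(C,\theta_C)}$.

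Second, since $\C$ is idempotent complete, $e_{(C,\theta_C)}$ splits to produce an object $C_0 \in \C$ together with $\iota: C_0 \to C$ and $\pi: C \to C_0$ satisfying $\pi\iota = \mathrm{id}_{C_0}$ and $\iota\pi = e_{(C,\theta_C)}$. The next task is to show that $C_0$ lies canonically in the essential image of $P^*$: namely, there exists $C' \in \C'$, unique up to canonical isomorphism, with $P^*(C') \cong C_0$. Setting $\Phi(C, \theta_C) := C'$ and extending to morphisms of comodules by functoriality yields the candidate quasi-inverse. One direction, $\Phi \circ \Gamma_P \simeq \mathrm{id}_{\C'}$, is then essentially immediate: for a cofree comodule $\Gamma_P(D) = (P^*(D), P^*\eta_P(D))$, the identity $\zeta \circ \eta_P = \mathrm{id}$ forces the associated idempotent to be the identity, so the splitting reproduces $D$.

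The main obstacle will be the other direction, $\Gamma_P \circ \Phi \simeq \mathrm{id}_{\C_{\mathbb{T}(P)}}$. Concretely, one must verify that under the isomorphism $C \cong P^*(C')$ coming from the splitting of $e_{(C,\theta_C)}$, the original structure map $\theta_C$ is recovered as $P^*\eta_P(C')$, so that the comodule structure is not just the underlying object but the full datum. This is a diagram chase combining the two axioms of Definition \ref{comodule} with the split-mono identity $\zeta \circ \eta_P = \mathrm{id}$ and the naturality of $\varepsilon_P$ and $\zeta$. The hypothesis that $\C$ is idempotent complete is exactly what makes the descent object $C_0$ exist, while the split-mono hypothesis on $\eta_P$ is exactly what makes the descent canonical and forces the reconstructed coaction to agree with $\theta_C$.
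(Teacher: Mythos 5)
The paper does not prove this proposition; it quotes it from Elagin, and your plan of building an explicit quasi-inverse by splitting a canonical idempotent attached to each comodule is indeed the strategy of the cited proof. However, your first step places the idempotent on the wrong object, and this opens a genuine gap. With the conventions of the statement ($P^*:\C\to\C'$ the left adjoint, $\eta_P:\mathrm{id}_{\C}\to P_*\circ P^*$, comonad $T_P=P^*\circ P_*$ acting on the target of $P^*$), the underlying object $C$ of a comodule $(C,\theta_C)$ does not live in $\C$; it is $P_*(C)$ that lives in $\C$, and the canonical idempotent of the cited argument is
\[ e:=\zeta_{P_*(C)}\circ P_*(\theta_C)\in\mathrm{End}\bigl(P_*(C)\bigr), \]
whose idempotency follows from coassociativity of $\theta_C$, naturality of $\zeta$, and $\zeta\circ\eta_P=\mathrm{id}$. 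Splitting $e=\iota\circ\pi$ in the idempotent complete category $\C$ produces the descent object $C_0\in\C$, and the maps $\varepsilon_{P}(C)\circ P^*(\iota)$ and $P^*(\pi)\circ\theta_C$ are then shown to be mutually inverse isomorphisms between $P^*(C_0)$ and $C$, compatibly with the coactions. This is where both hypotheses are actually consumed.

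By contrast, any endomorphism of $C$ itself of the kind you describe, obtained by composing $\theta_C$ with maps built from $P^*(\zeta)$, $\delta$ and $\varepsilon_P$, collapses to the identity. For instance
\[ \varepsilon_{P}(C)\circ P^*(\zeta_{P_*(C)})\circ P^*P_*(\theta_C)\circ\theta_C
=\varepsilon_{P}(C)\circ P^*\bigl(\zeta_{P_*(C)}\circ\eta_P(P_*(C))\bigr)\circ\theta_C
=\varepsilon_{P}(C)\circ\theta_C=\mathrm{id}_C, \]
using the coassociativity axiom, $\zeta\circ\eta_P=\mathrm{id}$, and the counit axiom of Definition \ref{comodule}. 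So your idempotent is trivial, your $C_0$ equals $C$, and your second step --- ``show that $C_0$ lies canonically in the essential image of $P^*$'' --- becomes exactly the essential surjectivity of $\Gamma_P$ that the proposition asserts, claimed with no candidate preimage in $\C$. Once the idempotent is moved to $P_*(C)$, the remainder of your outline (the idempotent on $P_*(P^*(D))$ for $\Gamma_P(D)$ is $\eta_P(D)\circ\zeta_D$, whose image is $D$, giving $\Phi\circ\Gamma_P\simeq\mathrm{id}_{\C}$; and the diagram chase recovering $\theta_C$ from $P^*\eta_P(C_0)$) goes through essentially as you describe.
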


\subsection{Application of comonads}
In this subsection, we prove that $\rho_2$ is injective as an application of comonads. 
Let $X$ be a cubic fourfold. Fix a real number $0< \alpha < 1/4$ and put $v:=[E_l]$, where $l$ is a line on $X$.  
Addington \cite{Add} proved the following theorem.

\begin{thm}[Section 5.1 in \cite{Add}]\label{RF}
The unit $\eta: \mathrm{id}_{\D_X} \to \Sigma_R \circ \Sigma$ is a split mono and $\Sigma_R \circ \Sigma \simeq \mathrm{id}_{\D_X} \oplus [-2]$.
\end{thm}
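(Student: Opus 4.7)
\emph{Proposal.} The plan is to compute $\Sigma_R \circ \Sigma$ explicitly by realizing it as a Fourier--Mukai-type endofunctor of $D^b(X)$ composed with the projection $\mathrm{pr}$ onto $\D_X$, and then to read off the decomposition from the geometry of the incidence correspondence on $X \times X$.

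First I would express both $\Sigma$ and $\Sigma_R$ in integral form. Since $\mathcal{F}(X) \subset X \times F(X)$ is the universal line, $\Sigma$ is the restriction of the Fourier--Mukai functor $\Phi_{\mathcal{O}_{\mathcal{F}(X)}}: D^b(X) \to D^b(F(X))$ along $i$. Because $p: \mathcal{F}(X) \to F(X)$ is a $\mathbb{P}^1$-bundle and $q: \mathcal{F}(X) \to X$ is flat of relative dimension $1$, Grothendieck duality identifies the right adjoint as $\Sigma_R \simeq \mathrm{pr} \circ \Phi_{\mathcal{K}'}$ for an explicit kernel $\mathcal{K}' \in D^b(F(X) \times X)$ built from $\mathcal{O}_{\mathcal{F}(X)}$ twisted by the appropriate relative dualizing bundles and shifted to account for the combined relative dimensions. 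Convolving the two kernels produces a kernel $\mathcal{K} \in D^b(X \times X)$ obtained as the derived pushforward of a line bundle from the self-fiber product $W := \mathcal{F}(X) \times_{F(X)} \mathcal{F}(X) = \{(x,l,y) : x,y \in l\}$, whose image in $X \times X$ is the incidence variety $I = \{(x,y) : x,y \text{ are collinear in } X\}$.

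Next I would decompose $\mathcal{K}$ according to the structure of the map $W \to X \times X$: it factors through $I$, is generically one-to-one over $I \setminus \Delta_X$, and has positive-dimensional fibers over $\Delta_X$ (the curve of lines through a point). The small-diagonal contribution can be extracted via the projection formula applied to $q$, together with the relative Serre duality accounting for the $[-2]$ shift dictated by the $2$-Calabi--Yau structure of $\D_X$; this piece should produce exactly the two summands $\mathrm{id}_{\D_X}$ and $[-2]$. The off-diagonal contribution, supported over $I \setminus \Delta_X$, should be resolved by a Koszul-type complex whose terms are pulled back from $\{\mathcal{O}_X(-1), \mathcal{O}_X, \mathcal{O}_X(1)\}$ on the first factor of $X \times X$; since these objects lie in the left complement of $\D_X$, applying $\mathrm{pr}$ annihilates this contribution. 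Combining the two pieces yields the desired isomorphism $\Sigma_R \circ \Sigma \simeq \mathrm{id}_{\D_X} \oplus [-2]$.

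The split mono statement is then essentially automatic: once the isomorphism above is established with $\eta$ identified as the natural inclusion into the first summand, the projection onto that summand provides the retraction. The main obstacle will be the off-diagonal vanishing: one must genuinely produce a resolution of the off-diagonal part of $\mathcal{K}$ by twists pulled back from $\{\mathcal{O}_X(-1), \mathcal{O}_X, \mathcal{O}_X(1)\}$, which requires a careful description of the secant/incidence geometry of lines on a cubic fourfold and of the normal bundle of $\mathcal{F}(X)$ in $X \times F(X)$. The Serre-duality bookkeeping on the diagonal, which is ultimately what produces the $[-2]$ summand and reflects the $2$-Calabi--Yau property of $\D_X$, is the other delicate point that must be handled cleanly through all the adjunctions.
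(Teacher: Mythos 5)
The paper does not actually prove Theorem~\ref{RF}: it is quoted from Section~5.1 of \cite{Add}, and the argument there is precisely the kernel computation you sketch --- convolve the two Fourier--Mukai kernels over $\mathcal{F}(X)\times_{F(X)}\mathcal{F}(X)$, observe that the result is supported on the secant locus $I=\{(x,y): \overline{xy}\subset X\}$, kill the off-diagonal part using the semi-orthogonal decomposition, and extract $\mathrm{id}\oplus[-2]$ from the diagonal. So your blueprint is the right one. The problem is that the two steps you defer as ``obstacles'' are the entire content of the proof, and the off-diagonal step is stated in a form that cannot work literally.

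Concretely: away from the diagonal, $I$ is the zero locus of a section of $\mathcal{O}(2,1)\oplus\mathcal{O}(1,2)$ on $X\times X$ (the two middle coefficients of the cubic form restricted to $\overline{xy}$), so any Koszul resolution of the off-diagonal part has \emph{four} distinct twist levels on each factor, e.g.\ $\mathcal{O}(-3,-3)$, $\mathcal{O}(-2,-1)\oplus\mathcal{O}(-1,-2)$, $\mathcal{O}(0,0)$ before twisting. Four values cannot all lie in the three-element set $\{-1,0,1\}$, so the terms cannot all be ``pulled back from $\{\mathcal{O}_X(-1),\mathcal{O}_X,\mathcal{O}_X(1)\}$ on the first factor'' and the projection $\mathrm{pr}$ alone cannot annihilate this piece. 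What actually happens is a combination of two different vanishings: terms whose \emph{input-side} twist lies in $\{0,-1,-2\}$ die because $H^*(E(-k))=0$ for $E\in\D_X$, $k=0,1,2$, while the remaining term dies only if its \emph{output-side} twist lands in $\langle\mathcal{O}(-1),\mathcal{O},\mathcal{O}(1)\rangle$; whether this happens depends on the exact twist of the convolution kernel coming from the relative dualizing complexes in $\Sigma_R$ --- the very bookkeeping you postpone --- and with the untwisted kernel $\mathcal{O}_I$ the term $\mathcal{O}(-3,-3)$ survives both tests and the argument collapses. Two further gaps: the convolution kernel is $R(q\times q)_*$ of a line bundle on the fiber product, which agrees with a twist of $\mathcal{O}_I$ only away from the diagonal (over the diagonal the fibers are the curves of lines through a point), so there is no a priori direct-sum ``decomposition according to the structure of the map,'' only exact triangles whose splitting must be proved; and the split-mono claim is not automatic from $\Sigma_R\Sigma\simeq\mathrm{id}\oplus[-2]$ --- you must also show that the unit $\eta$ is identified with the inclusion of the first summand (e.g.\ by evaluating on the objects $F_l$), which is an additional argument, not a formality.
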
 

As a consequence of Theorem \ref{RF}, we have the following statement.

\begin{cor}\label{split mono}
The unit $\eta: \mathrm{id}_{\D_X} \to P_* \circ P^*$ is a split mono and $P_* \circ P^* \simeq \mathrm{id}_{\D_X} \oplus [-2]$.
\end{cor}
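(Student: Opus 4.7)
The plan is to reduce Corollary \ref{split mono} to Theorem \ref{RF} by a direct computation, using the fact that all of the functors surrounding the core adjunction $(\Sigma \dashv \Sigma_R)$ in the definitions of $P^*$ and $P_*$ are either equivalences or autoequivalences.

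First I would observe that since $u: F(X) \iso M_\sigma(v)$ is an isomorphism of schemes, the pullback and pushforward $u^*$ and $u_*$ are mutually quasi-inverse equivalences on the derived categories, so $u^* \circ u_* \simeq \mathrm{id}_{D^b(F(X))}$. Substituting the definitions of $P^*$ and $P_*$ then gives
\[ P_* \circ P^* \;=\; [-1] \circ \Xi \circ \Sigma_R \circ u^* \circ u_* \circ \Sigma \circ \Xi^{-1} \circ [1] \;\simeq\; [-1] \circ \Xi \circ (\Sigma_R \circ \Sigma) \circ \Xi^{-1} \circ [1]. \]
Applying Theorem \ref{RF}, this becomes
\[ P_* \circ P^* \;\simeq\; [-1] \circ \Xi \circ (\mathrm{id}_{\D_X} \oplus [-2]) \circ \Xi^{-1} \circ [1]. \]
Since $\Xi \in \Aut^{\FM}(\D_X)$ is a triangulated autoequivalence, it commutes with the shift functor $[-2]$, so the conjugate $\Xi \circ (\mathrm{id} \oplus [-2]) \circ \Xi^{-1}$ is naturally isomorphic to $\mathrm{id}_{\D_X} \oplus [-2]$. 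Conjugating further by $[-1]$ and $[1]$ then gives $P_* \circ P^* \simeq \mathrm{id}_{\D_X} \oplus [-2]$, which is the second claim.

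For the split mono statement, I would use the standard formula that expresses the unit of a composite adjunction as an iterated whiskering. Writing $P^* = u_* \circ \Sigma \circ \Xi^{-1} \circ [1]$ as a composition of four functors, the unit $\eta_P: \mathrm{id}_{\D_X} \to P_* \circ P^*$ factors through the units of the four individual adjunctions $([1] \dashv [-1])$, $(\Xi^{-1} \dashv \Xi)$, $(\Sigma \dashv \Sigma_R)$, $(u_* \dashv u^*)$, whiskered on the outside by the appropriate functors. Three of these four adjunctions come from equivalences, so their units are isomorphisms and contribute only invertible factors; the only nontrivial contribution is the middle factor obtained from $\eta_\Sigma: \mathrm{id}_{\D_X} \to \Sigma_R \circ \Sigma$, whiskered by the autoequivalences $[-1] \circ \Xi$ on the left and $\Xi^{-1} \circ [1]$ on the right. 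By Theorem \ref{RF}, $\eta_\Sigma$ is a split mono, and whiskering a split mono by autoequivalences (which preserve direct sums and natural isomorphisms) yields another split mono. Hence $\eta_P$ is a split mono as well.

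The only point demanding a bit of care is the bookkeeping in this last step: one must verify that the splitting of $\eta_\Sigma$ from Theorem \ref{RF} transports through the outer whiskerings to produce an explicit retraction $P_* \circ P^* \to \mathrm{id}_{\D_X}$. This is essentially formal given that the outer functors are equivalences, but it is the only part of the argument that is not a line of algebra, so I expect it to be the main place where one should be explicit in the write-up.
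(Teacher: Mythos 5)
Your proposal is correct and is essentially the paper's own (implicit) argument: the paper simply states the corollary "as a consequence of Theorem \ref{RF}", and the intended deduction is exactly your conjugation of $\Sigma_R\circ\Sigma$ by the equivalences $u_*$, $\Xi^{-1}$ and $[1]$, which are invisible at the level of the unit and of the direct sum decomposition.
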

We prove the Proposition \ref{inj2}.

\begin{proof}[Proof of Proposition \ref{inj2}]
As in Example \ref{adj}, let $\mathbb{T}(P)$ be the comonad on $D^b(M_\sigma(v))$ determined by the adjoint pair $P=(P^* \dashv P_*)$. 
By Theorem \ref{comparison functor}, there is a comparison functor $\Gamma_P: \D_X \to D^b(M_\sigma(v))_{\mathbb{T}(P)}$ unique up to an  isomorphism such that $\Gamma_P \circ P_* \simeq Q_*$ and $Q^* \circ \Gamma_P \simeq P^*$.   Since  $\Phi \circ P_* \simeq P_*$ and $P^* \circ \Phi \simeq P^*$, we have  $\Gamma_P  \circ \Phi \circ P_* \simeq Q_*$ and $Q^* \circ \Gamma_P \circ \Phi \simeq P^*$ So the composition $\Gamma_P \circ \Phi$ is also a comparison functor. Due to the uniqueness of comparison functors, we have $\Gamma_P \simeq \Gamma_P \circ \Phi$. By Corollary \ref{split mono} and Theorem \ref{comparison theorem}, the comparison functor $\Gamma_P$ is an equivalence. Therefore, we obtain $\Phi \simeq \mathrm{id}_{\D_X}$.

\end{proof}

As a corollary of Proposition \ref{inj2}, we have 
\begin{cor}
We define 
\[\Aut^0(\D_X):=\{\Phi \in \Aut^{\FM}(\D_X) \mid \Phi^H=\mathrm{id}, \Phi(\Stab^*(\D_X)) \subset \Stab^*(\D_X)\}.\]
Then the natural homomorphism $\Aut^0(\D_X) \to \mathrm{Deck}(\pi)$ is injective, where $\mathrm{Deck}(\pi)$ is the group of deck transformations of the covering $\pi$.
\end{cor}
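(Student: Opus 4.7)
The plan is to deduce this corollary directly from Proposition \ref{inj2}. The natural map $\Aut^0(\D_X) \to \mathrm{Deck}(\pi_X)$ sends an autoequivalence $\Phi$ to the self-map $\sigma \mapsto \Phi\sigma$ of $\Stab^*(\D_X)$; this is a deck transformation because $\Phi^H = \mathrm{id}$ forces $\pi_X(\Phi\sigma) = \Omega_{Z \circ (\Phi^H)^{-1}} = \Omega_Z = \pi_X(\sigma)$, and the condition $\Phi(\Stab^*(\D_X)) \subset \Stab^*(\D_X)$ (applied also to $\Phi^{-1}$, which likewise lies in $\Aut^0(\D_X)$) guarantees that $\Phi$ acts by a bijection on the covering fibre.

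Suppose $\Phi \in \Aut^0(\D_X)$ lies in the kernel of this homomorphism, so that $\Phi\sigma = \sigma$ for every $\sigma \in \Stab^*(\D_X)$. Specialising to the stability condition $\sigma = \sigma_\alpha$ of Theorem \ref{BLMS stability} for some $0 < \alpha < 1/4$, we obtain $\Phi \in \Aut(\D_X, \sigma_\alpha)$. Combining this with the standing assumption $\Phi^H = \mathrm{id}$, the autoequivalence $\Phi$ lies in $\mathrm{Ker}(\rho_2)$ for the homomorphism $\rho_2 : \Aut(\D_X, \sigma_\alpha) \to \mathrm{O}_{\mathrm{Hodge}}(H^*(\D_X, \ZZ), P_{\sigma_\alpha})$.

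By Proposition \ref{inj2}, this kernel is trivial, so $\Phi \simeq \mathrm{id}_{\D_X}$ in $\Aut^{\FM}(\D_X)$, proving injectivity. There is essentially no obstacle beyond checking that the homomorphism $\Aut^0(\D_X) \to \mathrm{Deck}(\pi_X)$ is well defined; the whole content is encoded in Proposition \ref{inj2}, whose proof (via comonads and the comparison functor for the adjoint pair $(P^*, P_*)$) was the real work.
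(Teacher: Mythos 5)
Your proposal is correct and follows exactly the paper's own argument: an element of the kernel fixes every stability condition in $\Stab^*(\D_X)$, in particular $\sigma_\alpha$, and then Proposition \ref{inj2} forces $\Phi\simeq\mathrm{id}_{\D_X}$. The extra verification that the map to $\mathrm{Deck}(\pi_X)$ is well defined is a reasonable addition but does not change the substance.
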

\begin{proof}
Take $\Phi \in \mathrm{Ker}\bigl(\Aut^0(\D_X) \to \mathrm{Deck}(\pi)\bigr)$. Then $\Phi \sigma=\sigma$ holds. By Proposition \ref{inj2}, we have $\Phi=\mathrm{id}_{\D_X}$.
\end{proof}

\section{Automorphisms of cubic fourfolds and K3 surfaces}
In this section, we compare automorphisms of cubic fourfolds and autoequivalences of derived categories of K3 surfaces. For labeled automorphisms of cubic fourfolds, they induce polarized automorphisms of K3 surfaces. 
We introduce the notion of labeled automorphisms of cubic fourfolds.
\begin{dfn}
For a labeled cubic fourfold $(X,K)$, we define the labeled automorphism group $\Aut(X,K)$ of $(X,K)$ by 
\[\Aut(X,K):=\{f \in \Aut(X) \mid f^*|_K=\mathrm{id}_K\}.\] 
An automorphism $f \in \Aut(X)$ is called a labeled automorphism of $X$ if there is a rank two primitive sublattice $K \subset H^{2,2}(X,\ZZ)$ such that $(X, K)$ is a labeled cubic fourfold and $f \in \Aut(X,K)$.
\end{dfn}

We study polarized K3 surfaces associated to labeled cubic fourfolds from point of view of moduli spaces of stable objects in Kuznetsov components.

Take an integer $d$ that satisfies conditions $(*)$  and $(**)$. Let $(X,K)$ be a labeled cubic fourfold of discriminant $d$. By Remark \ref{label Mukai} and Theorem \ref{Hassett K3}, there is a polarized K3 surface $(S,h)$ of degree $d$ such that there is a Hodge isometry $\varphi: L^\perp_K \iso L^\perp_h$, where  $L_h$ is the sublatitce $H^0(S,\ZZ)\oplus \ZZ \cdot h \oplus H^4(S,\ZZ)$ of the Mukai lattice $H^*(S,\ZZ)$ of $S$. By Theorem 1.14.4 in \cite{Nik}, there is a Hodge isometry $\tilde{\varphi}: H^*(\D_X,\ZZ) \iso H^*(S,\ZZ)$ such that $\tilde{\varphi}|_{L^\perp_K}=\mathrm{id}_{L^\perp_K}$. Then $\tilde{\varphi}$ induces the isometry $\tilde{\varphi}|_{L_K}: L_K \iso L_h$. 
We will reconstruct the polarized K3 surface $(S,h)$ in terms of moduli spaces of stable objects in $\D_X$. Then we will obtain the nice equivalence $\D^b(S) \iso \D_X$ in the context of both Hodge theory and moduli theory.
First, we choose a Mukai vector and a stability condition on $\D_X$ to specify the moduli space. We put $v:=\tilde{\varphi}^{-1}(0,0,1) \in \widetilde{H}^{1,1}(\D_X,\ZZ)$. Note that the Mukai vector $v$ is a primitive isotropic.
Replacing $\tilde{\varphi}$ by $\bigl(\mathrm{id}_{H^0} \oplus -\mathrm{id}_{H^2} \oplus \mathrm{id}_{H^4}\bigr) \circ \tilde{\varphi}$ if necessary, we may assume that  $\Omega_1:=\tilde{\varphi}^{-1}(e^{ih})$ is contained in $\P^+_0(\D_X)$. Here, we use $d>6$ and Theorem \ref{stability K3} and Theorem \ref{BriK3}. We define $\Omega_n:=\tilde{\varphi}^{-1}(e^{inh}) \in \P^+_0(\D_X)$ for a positive integer $n$. 

\begin{dfn}
For a positive integer $n$, we define the group homomorphism
\[Z_n: \widetilde{H}^{1,1}(\D_X,\ZZ) \to \CC, w \mapsto (\Omega_n,w). \]
\end{dfn}

The following lemma will be used in the proof of Lemma \ref{generic}.

\begin{lem}[Lemma 8.2 in \cite{Bri08}]\label{Bri finite}
Fix $C>0$. For $\Omega \in \P^+_0(\D_X)$, there are finitely many elements $w \in \widetilde{H}^{1,1}(\D_X,\ZZ)$ such that $w^2 \geq -2$ and $|(\Omega,w)|\leq C$. 
\end{lem}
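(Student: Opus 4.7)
The plan is to use positive definiteness of the $2$-plane $P:=\langle\mathrm{Re}(\Omega),\mathrm{Im}(\Omega)\rangle_\RR$ together with the signature of the Mukai form on $\widetilde{H}^{1,1}(\D_X,\RR)$ to confine any admissible $w$ to a bounded region of that real vector space; finiteness then follows at once from discreteness of $\widetilde{H}^{1,1}(\D_X,\ZZ)$ as a lattice in it.

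First I would pin down the signature. Because $H^*(\D_X,\ZZ)$ has signature $(4,20)$ and $\widetilde{H}^{2,0}\oplus\widetilde{H}^{0,2}$ already contributes a positive-definite $2$-plane to the transcendental part $T_{\D_X}\otimes\RR$, the Mukai pairing restricts to signature $(2,\rho(\D_X))$ on $\widetilde{H}^{1,1}(\D_X,\RR)$. By definition of $\P^+(\D_X)$ the plane $P$ is a positive-definite subspace of $\widetilde{H}^{1,1}(\D_X,\RR)$, and the signature count shows it is a maximal one; hence the orthogonal complement $P^\perp$ taken inside $\widetilde{H}^{1,1}(\D_X,\RR)$ is negative definite.

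Given this setup, for any candidate $w\in\widetilde{H}^{1,1}(\D_X,\ZZ)$ I would decompose $w=w_P+w_{P^\perp}$ along the orthogonal splitting. Since $\Omega\in P\otimes\CC$, one has $(\Omega,w)=(\Omega,w_P)$, and the $\RR$-linear map $P\to\CC$, $x\mapsto(\Omega,x)$, is an isomorphism because the pairing on $P$ is non-degenerate and $\mathrm{Re}(\Omega),\mathrm{Im}(\Omega)$ are linearly independent. The hypothesis $|(\Omega,w)|\leq C$ therefore traps $w_P$ in a compact subset of $P$ and in particular yields $0\leq w_P^2\leq K$ for some constant $K=K(\Omega,C)$. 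Then $w^2=w_P^2+w_{P^\perp}^2\geq -2$ gives $w_{P^\perp}^2\geq -2-K$, and negative definiteness on $P^\perp\cap\widetilde{H}^{1,1}(\D_X,\RR)$ turns this lower bound on $w_{P^\perp}^2$ into an upper bound on $|w_{P^\perp}|$.

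Both components thus lie in compact sets, so $w$ is confined to a bounded region of the finite-dimensional space $\widetilde{H}^{1,1}(\D_X,\RR)$; intersecting with the rank-$(2+\rho(\D_X))$ lattice $\widetilde{H}^{1,1}(\D_X,\ZZ)$ leaves only finitely many points. The one delicate point is the signature computation guaranteeing that $P^\perp\cap\widetilde{H}^{1,1}(\D_X,\RR)$ is negative definite rather than of mixed signature, for otherwise the constraint $w^2\geq -2$ would impose no bound on $w_{P^\perp}$ and the argument would collapse. Note incidentally that the strengthening $\Omega\in\P^+_0(\D_X)$ which removes the perpendiculars to $(-2)$-classes plays no role in this lemma; the argument uses only $\Omega\in\P^+(\D_X)$.
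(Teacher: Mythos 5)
Your proof is correct and is essentially the same argument as the cited Lemma 8.2 of Bridgeland (the paper gives no independent proof, only the citation): split $w$ along the orthogonal decomposition determined by the positive-definite plane $P$, use the signature $(2,\rho(\D_X))$ of $\widetilde{H}^{1,1}(\D_X,\RR)$ to see that $P^\perp$ is negative definite, bound $w_P$ by the central-charge condition and $w_{P^\perp}$ by $w^2\geq -2$, and conclude by discreteness of the lattice. Your closing remark that only $\Omega\in\P^+(\D_X)$ is needed is also accurate.
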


Recall that a stability condition $\sigma \in \Stab^*(\D_X)$ is $v$-generic if and only if a $\sigma$-semistable object $E \in \D_X$ with $v(E)=v$ is $\sigma$-stable by the primitivity of $v$.

\begin{lem}\label{generic}
For a positive integer $n$, let $\sigma_n=(Z_n, \A_n) \in \Stab^*(S)$ be a stability condition on $S$. 
Then there is a positive integer $N$ such that $\sigma_n$ is $v$-generic for any integer $n \geq N$.
\end{lem}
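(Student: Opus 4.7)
The plan is to establish the stronger claim that $\sigma_n$ is $v$-generic for every $n\geq 1$, so that $N=1$ suffices; this follows from a clean integrality argument using only the evenness of $d$ built into condition~$(*)$.

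I would argue by contradiction: suppose some $\sigma_n$-semistable object $E\in D^b(S)$ with $v(E)=v$ fails to be $\sigma_n$-stable. Then there is a proper nonzero subobject $F\hookrightarrow E$ in the heart $\A_n$ with $\phi_{\sigma_n}(F)=\phi_{\sigma_n}(E)$, forcing $Z_n(v(F))=t\cdot Z_n(v)$ for some $t\in(0,1)$. Writing $v(F)=\tilde\varphi^{-1}(r,c,m)$ with $(r,c,m)\in\ZZ\oplus \mathrm{NS}(S)\oplus\ZZ$, expanding $e^{inh}=(1,\,inh,\,-n^2 d/2)$ and applying the Mukai pairing yield
\[Z_n(v)=-1, \qquad Z_n(v(F))\;=\;\tfrac{n^2 d}{2}\,r\;-\;m\;+\;i\,n\,(h\cdot c).\]
Separating real and imaginary parts in $Z_n(v(F))=-t$ produces $h\cdot c=0$ together with
\[m-\tfrac{n^2 d}{2}\,r\;=\;t\;\in\;(0,1).\]

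The main (and only) obstacle is to contradict this last equation. Since $(*)$ forces $d\equiv 0$ or $2\pmod 6$, the integer $d$ is even, so $\tfrac{n^2 d}{2}\in\ZZ$ and $m-\tfrac{n^2 d}{2}\,r\in\ZZ$; but $(0,1)\cap\ZZ=\emptyset$, the desired contradiction. Hence every $\sigma_n$-semistable object of class $v$ is $\sigma_n$-stable, so $\sigma_n$ is $v$-generic for every $n\geq 1$. If one wished to avoid the parity shortcut (for instance, in a setup where $d$ might be odd), one could instead invoke Lemma~\ref{Bri finite} with $C=|Z_n(v)|=1$: the bound $\bigl|m-\tfrac{n^2 d}{2}\,r\bigr|\leq 1$ combined with $v(F)^2=c^2-2rm\geq -2$ and $c\in h^\perp\cap \mathrm{NS}(S)$ (where $c^2\leq 0$) forces $r=0$ for all sufficiently large $n$, after which $m\in\ZZ\cap(0,1)=\emptyset$ dispatches the remaining case.
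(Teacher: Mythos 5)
Your proof is correct, and it actually establishes more than the lemma asks for: you show that $N=1$ works. The key computation $Z_n(v(F))=\tfrac{n^2d}{2}r-m+in(h\cdot c)$ is right (note the paper's own display has a typo, writing $\tfrac{1}{2}ndr$ for the real part), and the observation that $\mathrm{Re}\,\Omega_n=\tilde\varphi^{-1}(1,0,-n^2d/2)$ is an \emph{integral} class — so that $Z_n$ is integer-valued on the kernel of its imaginary part — immediately rules out any proper subobject of equal phase, since such a subobject would need $Z_n(v(F))\in(-1,0)$. This is genuinely different from the paper's route: there one passes to a $\sigma_n$-stable Jordan--H\"older factor $A$ (to have $v(A)^2\geq -2$ available), invokes Bridgeland's finiteness lemma (Lemma \ref{Bri finite}) to produce a finite set $\Gamma$ of classes with $|(\Omega_1,w)|\leq 1$, and chooses $N$ larger than all $H^0$-components occurring in $\Gamma$, so that a destabilizer for $n\geq N$ would force a class of $\Gamma$ to have too large a rank. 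Your argument avoids the finiteness lemma, avoids the reduction to stable factors, needs no lower bound on $n$, and uses only the evenness of $d=h^2$ (automatic for a K3 polarization, and also forced by $(*)$); what the paper's heavier argument buys is robustness — it would survive in situations where the real part of $\Omega_n$ fails to be integral, which is exactly the regime your fallback via Lemma \ref{Bri finite} addresses. One small caution on that fallback: to use $v(F)^2\geq -2$ you must first replace $F$ by a $\sigma_n$-stable factor of the same phase, as the paper does; an arbitrary subobject need not satisfy that inequality. Since your main argument never uses $v(F)^2\geq-2$, this does not affect the proof as written.
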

\begin{proof}
By Lemma \ref{Bri finite}, the set 
\[\Gamma:=\{w \in \widetilde{H}^{1,1}(\D_X,\ZZ) \mid w^2 \geq -2, |(\Omega_1,w)|\leq 1 \}\]
is a finite set.
For $w \in \widetilde{H}^{1,1}(\D_X,\ZZ)$, let $w_0$ be the $H^0$-part of  $\tilde{\varphi}(w)\in \widetilde{H}^{1,1}(S,\ZZ)$.
We define $N:=\mathrm{max}\{|w_0|\mid w \in \Gamma\}+1$. Take an integer $n \geq N$. Let $E$ be a $\sigma_n$-stable object with $v(E)=v$ and $\phi_{\sigma_n}(E)=1$.
By the definitions of $\Omega_n$ and $v$, we have $Z_n(v(E))=-1$.
Assume that there is a subobject $A$ of  $E$ in $\A_n$ such that $A$ is $\sigma_n$-stable and $\phi_{\sigma_n}(A)=1$. 
Denote $(r,c,m):=\tilde{\varphi}(v(A))$. Due to $\phi_{\sigma_n}(A)=1$, we have $h \cdot c=0$. Since $h$ is ample and $\mathrm{sign}(\mathrm{NS}(S))=(1,\rho(S))$, we have $c^2<-2$. 
If $r=0$, then $v(A)^2=c^2<-2$ holds and this is contradiction. So the integer $r$ is not zero.
Since $Z_n(A)$ and $Z_n(E/A)$ are negative real numbers and $Z_n(A)+Z_n(E/A)=-1$, we have $|Z_n(v(A))|\leq1$.
Denote $(r,c,m):=\tilde{\varphi}(v(A))$. 
Then we obtain
\begin{eqnarray*}
Z_{n}(v(A))
&=& (\Omega_n, v(A))\\
&=& (e^{inh},(r,c,m))\\
&=&  -m+\frac{1}{2}ndr\\
&=& (e^{ih},(rn,c,m))\\
&=& (\Omega_1,\tilde{\varphi}(rn,c,m)). 
\end{eqnarray*}
So $\tilde{\varphi}(rn,c,m) \in \Gamma$ holds. Note that $|rn| \geq N$ holds. By the definition of $N$, this is contradiction. 
Therefore, $E$ is a $\sigma_n$-stable object.
\end{proof}

We choose a $v$-generic stability condition $\sigma_n$ such that $\sigma_n$ is fixed by the action of $\Aut(X,K)$.

\begin{prop}\label{good heart}
Let $n$ be a positive integer. There is a stability condition $\sigma_n=(Z_n,\A_n) \in \Stab^*(\D_X)$ such that $f_*\sigma_n=\sigma_n$ holds for any automorphism $f \in \Aut(X,K)$.
\end{prop}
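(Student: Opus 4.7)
The plan is to build $\sigma_n$ by lifting a path in $\P^+_0(\D_X)$ that starts at the projection of the stability condition $\sigma_\alpha$ of Theorem \ref{BLMS stability} and ends at $\Omega_n$, keeping the path inside a linear subspace on which $\Aut(X,K)$ acts trivially. By Proposition \ref{fix cubic}, $g_*\sigma_\alpha = \sigma_\alpha$ for every $g \in \Aut(X)$, so $\sigma_\alpha$ is a starting point already fixed by $\Aut(X,K)$; the covering map $\pi_X : \Stab^*(\D_X) \to \P^+_0(\D_X)$ together with uniqueness of path lifting will then transport this fixedness to the endpoint.

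First I would verify that every $f \in \Aut(X,K)$ induces a Hodge isometry $f^H_*$ acting as the identity on the rank three sublattice $L_K$. Since $L_K$ is the saturation of $\langle A_2, \kappa_T \rangle$ with $\kappa_T = v_X^{-1}(T)$ and $T$ a generator of $K \cap H^{2,2}_{\mathrm{prim}}(X,\ZZ)$, this reduces to two observations: the classes $\lambda_1, \lambda_2$ are independent of the choice of line (the Fano scheme of lines is connected and we work in topological $K$-theory), so $f_*\lambda_k = \lambda_k$; and $f^*T = T$ by the labeled condition $f^*|_K = \mathrm{id}_K$. Next I would check that both $\Omega_{Z_\alpha}$ and $\Omega_n$ belong to $L_K \otimes \CC$: the former because $P_{\sigma_\alpha} = A_2 \otimes \RR \subset L_K \otimes \RR$ by Proposition \ref{A2}, and the latter because $e^{inh} \in L_h \otimes \CC$ and $\tilde\varphi$ restricts to an isometry $L_K \iso L_h$.

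The core of the argument is then to choose a path $\gamma : [0,1] \to (L_K \otimes \CC) \cap \P^+_0(\D_X)$ from $\Omega_{Z_\alpha}$ to $\Omega_n$, lift it to a path $\tilde\gamma$ in $\Stab^*(\D_X)$ with $\tilde\gamma(0) = \sigma_\alpha$, and set $\sigma_n := \tilde\gamma(1)$. Because $\pi_X(\sigma_n) = \Omega_n$, the central charge of $\sigma_n$ is $(\Omega_n,-) = Z_n$, as required. For any $f \in \Aut(X,K)$, the path $f_*\tilde\gamma$ is another lift of $f^H_* \circ \gamma = \gamma$ (using the triviality of $f^H_*$ on $L_K \otimes \CC$) starting at $f_*\sigma_\alpha = \sigma_\alpha$, so uniqueness of path lifting forces $f_*\tilde\gamma = \tilde\gamma$, and in particular $f_*\sigma_n = \sigma_n$.

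The main obstacle I anticipate is verifying the path-connectedness of $(L_K \otimes \CC) \cap \P^+_0(\D_X)$. Because $L_K$ has signature $(2,1)$, the locus of $\Omega \in L_K \otimes \CC$ spanning a positive definite plane has two connected components interchanged by complex conjugation, and $\Omega_{Z_\alpha}, \Omega_n$ lie in the same component because both belong to $\P^+(\D_X)$, which is precisely why the sign adjustment on $\tilde\varphi$ was imposed earlier. Removing from this open set the locally finite family of complex hyperplanes cut out by $(-2)$-classes in $\widetilde{H}^{1,1}(\D_X,\ZZ)$ whose projection to $L_K \otimes \QQ$ is nonzero preserves connectedness, since one removes only complex codimension one loci from a connected complex manifold, and such a path $\gamma$ can then be chosen.
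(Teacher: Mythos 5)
Your proposal is correct and follows essentially the same route as the paper: both arguments place $\pi_X(\sigma_\alpha)$ and $\Omega_n$ in $\P^+_0(\D_X)\cap(L_K\otimes\CC)$, use path-connectedness of that set, and invoke uniqueness of lifts along the covering $\pi_X$ together with Proposition \ref{fix cubic} and the triviality of the $\Aut(X,K)$-action on $L_K$ to conclude $f_*\sigma_n=\sigma_n$. The only point the paper makes explicit that you leave implicit is that no $(-2)$-class lies in $L_K^\perp$ (so that no hyperplane $\delta^\perp$ swallows all of $L_K\otimes\CC$), but this already follows from your observation that $\Omega_{Z_\alpha}\in A_2\otimes\CC$ lies in $\P^+_0(\D_X)$.
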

\begin{proof}
Recall that we have the isometry $\tilde{\varphi}|_{L_K}: L_K \iso L_h$.
Since $e^{inh} \in L_h\otimes \CC$, the class $\Omega_n$ is contained in  $L_K \otimes \CC$. 
Since $A_2 \subset L_K$, there are no $(-2)$ classes in $L^\perp_K$ by Theorem \ref{BLMS stability} and Proposition \ref{A2}. So the intersection 
$\P^+_0(\D_X) \cap (L_K \otimes \CC)$ is path-connected. Fix $0<\alpha<1/4$. Take a path $\gamma$ from $\pi_X(\sigma_\alpha)$ to $\Omega_n$.  
By Theorem \ref{BriK3}, the path $\gamma$ has the unique lift $\tilde{\gamma}: [0,1] \to \Stab^*(S)$ such that $\tilde{\gamma}(0)=\sigma_\alpha$. 
By the definition of $L_K$ and $\Aut(X,K)$, $f^*$ acts on the lattice $L_K$ trivially.
By Proposition \ref{fix cubic}, $f_* \circ \tilde{\gamma}$ is also a lift of $\gamma$ starting from $\sigma_\alpha$.  We define the stability condition $\sigma_n=(Z_n,\A_n) \in \Stab^*(\D_X)$ by $\sigma_n:=\tilde{\gamma}(1)$. By the uniqueness of lifts, we have $f_* \sigma_n=\sigma_n$ for any automorphism $f \in \Aut(X,K)$. 
\end{proof}
Fix a sufficiently large integer $n>0$ as in Lemma \ref{generic}. Take a stability condition $\sigma=(Z,\A):=\sigma_n$ in Proposition \ref{good heart}. 
Since $v$ is a primitive isotropic and $\sigma$ is $v$-generic, the moduli space $M_\sigma(v)$ of $\sigma$-stable objects with Mukai vector $v$ is a K3 surface. There is an isotropic $v' \in \widetilde{H}^{1,1}(\D_X,\ZZ)$ such that $(v,v')=-1$. Hence, $M_\sigma(v)$ is the fine moduli space. Take an universal family $\U \in D^b(M_\sigma(v) \times X)$ of $M_\sigma(v)$ over $X$. Then we have the equivalence $\Phi_\U: D^b(M_\sigma(v)) \iso \D_X$. By Theorem 1.3 in \cite{BM1}, we have the ample divisor $l_\sigma$ on $M_\sigma(v)$ such that $l_\sigma \cdot C=\mathrm{Im}Z(\Phi_\U(\mathcal{O}_C))$  holds for any curve $C$ on $M_\sigma(v)$.
We denote the ample divisor $l_\sigma/n$ by $\omega$.

\begin{prop}\label{isomorphism}
There are an isomorphism $t: M_\sigma(v) \iso S$ and the universal family $\U \in D^b(M_\sigma(v)\times X)$ of $M_\sigma(v)$ over $X$ such that $t^*h=\omega$ and $t_*=\tilde{\varphi}\circ \Phi^H_\U$.
\end{prop}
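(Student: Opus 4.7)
The plan is to use the equivalence $\Phi_\U$ and the Hodge isometry $\tilde\varphi$ to build a Hodge isometry $\psi := \tilde\varphi \circ \Phi^H_\U : H^*(M_\sigma(v), \ZZ) \iso H^*(S, \ZZ)$, extract from $\psi$ an $H^2$-level Hodge isometry sending $\omega$ to $h$, invoke the Strong Torelli theorem for K3 surfaces to produce $t$, and finally twist $\U$ by an appropriate line bundle on $M_\sigma(v)$ so that $t_* = \psi$ on the entire Mukai lattice. Since $\Phi_\U(\O_{[E]}) \simeq E$ for every closed point $[E] \in M_\sigma(v)$ and $\tilde\varphi(v) = (0,0,1)$ by construction, $\psi$ fixes the point class $(0,0,1)$; therefore $\psi$ preserves $(0,0,1)^\perp = H^2 \oplus H^4$ and descends to a Hodge isometry $\bar\psi: H^2(M_\sigma(v),\ZZ) \iso H^2(S,\ZZ)$. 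The pairing $((0,c_1,0),(0,0,1)) = 0$ further forces the $H^0$-component of $\psi(0,c_1,0)$ to vanish, so $\psi(0,c_1,0) = (0, \bar\psi(c_1), *)$.

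To identify $\bar\psi(\omega)$, unwind the defining property of $l_\sigma$: for any curve $C \subset M_\sigma(v)$,
\[ l_\sigma \cdot C = \mathrm{Im}\, Z(\Phi_\U(\O_C)) = \mathrm{Im}(\Omega_n, \Phi^H_\U(v(\O_C))) = \mathrm{Im}(e^{inh}, \psi(v(\O_C))), \]
using $Z = Z_n = (\Omega_n, -)$ together with $\Omega_n = \tilde\varphi^{-1}(e^{inh})$ and that $\tilde\varphi$ is an isometry. Since $v(\O_C) = (0, [C], s)$ for some $s \in \ZZ$, the previous paragraph yields $\psi(v(\O_C)) = (0, \bar\psi([C]), m)$, and expanding the Mukai pairing with $e^{inh} = (1, inh, -n^2 d/2)$ gives imaginary part $n\, h \cdot \bar\psi([C])$. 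Hence $\omega \cdot C = h \cdot \bar\psi([C]) = \bar\psi^{-1}(h) \cdot C$ for every curve $C$, and non-degeneracy of the intersection pairing forces $\omega = \bar\psi^{-1}(h)$; in particular the a priori $\QQ$-divisor $\omega = l_\sigma/n$ is integral and ample. Both $\omega$ and $h$ being ample, the Strong Torelli theorem for K3 surfaces supplies a unique isomorphism $t: M_\sigma(v) \iso S$ with $t^* = \bar\psi^{-1}$, equivalently $t_*|_{H^2} = \bar\psi$, and then $t^* h = \bar\psi^{-1}(h) = \omega$.

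To upgrade $t_*|_{H^2} = \bar\psi$ to the full identity $t_* = \psi$, we use the freedom in the choice of universal family. The constraints $((1,0,0),(0,0,1)) = -1$ and $(1,0,0)^2 = 0$, together with $\psi(0,0,1) = (0,0,1)$, force $\psi(1,0,0) = (1, c, c^2/2)$ for some $c \in \mathrm{NS}(S)$. Replacing $\U$ by $\U \otimes p_M^* L$ with $c_1(L) := -t^* c$, and using the formula $\Phi^H_{\U \otimes p_M^* L}(\alpha) = \Phi^H_\U(e^{c_1(L)} \cdot \alpha)$, a short Mukai-pairing computation shows that $\psi$ now satisfies $\psi(1,0,0) = (1,0,0)$, while $\psi(0,0,1) = (0,0,1)$ and $\bar\psi$ are unaffected by the twist. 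Orthogonality of the new $\psi(0, c_1, 0)$ with both $(1,0,0) = \psi(1,0,0)$ and $(0,0,1) = \psi(0,0,1)$ then forces its $H^0$- and $H^4$-components to vanish, yielding $\psi(0, c_1, 0) = (0, \bar\psi(c_1), 0) = t_*(0, c_1, 0)$ and hence $\psi = t_*$ on all of $H^*(M_\sigma(v), \ZZ)$.

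The most delicate step is the calculation in the second paragraph linking $l_\sigma \cdot C$ to $h \cdot \bar\psi([C])$ via the Mukai pairing, together with the attendant verification that $\omega = l_\sigma/n$ is genuinely an integral ample divisor, which is the prerequisite for invoking Strong Torelli. The final line-bundle twist is then essentially bookkeeping once the formula $\Phi^H_{\U \otimes p_M^* L}(\alpha) = \Phi^H_\U(e^{c_1(L)}\cdot \alpha)$ is in hand.
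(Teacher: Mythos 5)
Your argument is correct and follows essentially the same route as the paper's proof: both rest on the facts that $\tilde\varphi\circ\Phi^H_\U$ fixes $(0,0,1)$ by the choice of $v$, that the computation of $l_\sigma\cdot C$ via the Mukai pairing with $\Omega_n=\tilde\varphi^{-1}(e^{inh})$ identifies $\omega$ with the preimage of $h$, that a twist of $\U$ by a line bundle pulled back from $M_\sigma(v)$ normalizes the image of $(1,0,0)$, and that the Torelli theorem for K3 surfaces then produces $t$. The only difference is cosmetic — you perform the line-bundle twist after invoking Torelli rather than before, and you spell out the integrality of $\omega=l_\sigma/n$ a bit more explicitly.
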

\begin{proof}
By the definition of the Mukai vector $v$, we have $\tilde{\varphi} \circ \Phi^H_\U(0,0,1)=(0,0,1)$. Since $(\tilde{\varphi} \circ \Phi^H_\U)^{-1}(1,0,0)$ is an isotropic and $((\tilde{\varphi} \circ \Phi^H_\U)^{-1}(1,0,0), (0,0,1))=-1$, there is a line bundle $L$ on $M_\sigma(v)$ such that $(\tilde{\varphi} \circ \Phi^H_\U)^{-1}(1,0,0)=(1,L,L^2/2)$. 
Equivalently, we have $\tilde{\varphi} \circ \Phi^H_\U(1,L,L^2/2)=(1,0,0)$. Let $p_M: M_\sigma(v) \times X \to M_\sigma(v)$ be the projection.  Replacing $\U$ by $\U \otimes p^*_M L^{-1}$, 
we may assume that $\tilde{\varphi} \circ \Phi^H_\U(1,0,0)=(1,0,0)$ holds. The restriction of $\tilde{\varphi} \circ \Phi^H_\U$ to the second cohomology group $H^2(M_\sigma(v),\ZZ)$ of $M_\sigma(v)$ is the Hodge isometry $\tilde{\varphi} \circ \Phi^H_\U|_{H^2(M_\sigma(v),\ZZ)}: H^2(M_\sigma(v),\ZZ) \iso H^2(S,\ZZ)$.
Take a curve $C$ on $M_\sigma(v)$.
We have
\begin{eqnarray*}
l_\sigma \cdot C
&=& Z(\Phi_\U(\mathcal{O}_C))\\
&=& (\mathrm{Im}(\Omega_n), v(\Phi_\U(\mathcal{O}_C)))\\
&=&  (\tilde{\varphi}^{-1}(0,nh,0),\Phi^H_\U(v(\mathcal{O}_C)))\\
&=& ((\tilde{\varphi} \circ \Phi^H_\U)^{-1}(0,nh,0), (0,C,\chi(\mathcal{O}_C))\\
&=& (\tilde{\varphi} \circ \Phi^H_\U)^{-1}(0,nh,0) \cdot C. 
\end{eqnarray*}
So we have $(\tilde{\varphi} \circ \Phi^H_\U)^{-1}(0,h,0)=(0,\omega,0)$, equivalently, $\tilde{\varphi} \circ \Phi^H_\U(0,\omega,0)=(0,h,0)$.
By the Torelli theorem for K3 surfaces, there is an isomorphism $t: M_\sigma(v) \iso S$ such that $t_*=\tilde{\varphi} \circ \Phi^H_\U$. In particular, we have $t^*h=\omega$.
\end{proof}
From now on, take the universal family $\U$ as in Proposition \ref{isomorphism}. By Proposition \ref{isomorphism}, we obtain the following.

\begin{rem}\label{sublattice L}
Let $L_\omega$ be the sublattice $H^0(M_\sigma(v),\ZZ) \oplus \ZZ \cdot \omega \oplus H^4(M_\sigma(v),\ZZ)$ of $H^*(\M_\sigma(v),\ZZ)$.
The Hodge isometry $\Phi^H_\U: H^*(M_\sigma(v),\ZZ) \iso H^*(\D_X,\ZZ)$ induces the isometry
\[ \Phi^H_\U|_{L_\omega}:L_\omega \iso L_K \]
and the Hodge isometry
\[ \Phi^H_\U|_{L^\perp_\omega}: L^\perp_\omega \iso L^\perp_K.\]

\end{rem}
The equivalence $\Phi_\U: D^b(M_\sigma(v)) \iso \D_X$ induces the isomorphism between the distinguished connected components of the spaces of stability conditions.

\begin{prop}\label{dist connected}
For a stability condition $\tau:=(W,\B) \in \Stab(\D_X)$, we define the stability condition $\Phi^*_\U\tau \in \Stab(M_\sigma(v))$ by
\[\Phi^*_\U\tau:=(W \circ \Phi^H_\U, \Phi^{-1}_\U(\B)). \]
Then we have the isomorphism
\[\Phi^*_\U: \Stab^*(\D_X) \iso \Stab^*(M_\sigma(v)).\]
\end{prop}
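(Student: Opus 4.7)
The plan is first to observe that $\Phi^*_\U$ is a homeomorphism between the full stability manifolds, and then to identify which component of $\Stab(M_\sigma(v))$ it sends the distinguished component $\Stab^*(\D_X)$ into, by tracking the reference stability condition $\sigma=\sigma_n$ constructed in Proposition~\ref{good heart}.

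Because $\Phi_\U:D^b(M_\sigma(v))\iso \D_X$ is an equivalence of triangulated categories, the formula $\tau=(W,\B)\mapsto(W\circ\Phi^H_\U,\Phi^{-1}_\U(\B))$ defines a well-known homeomorphism $\Phi^*_\U:\Stab(\D_X)\iso\Stab(M_\sigma(v))$, with inverse induced by $\Phi^{-1}_\U$. In particular $\Phi^*_\U$ permutes connected components, so it suffices to show that the image of some point of $\Stab^*(\D_X)$ lands in $\Stab^*(M_\sigma(v))$.

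I would then compute $\Phi^*_\U\sigma$ for $\sigma=\sigma_n$. Using $t_*=\tilde\varphi\circ\Phi^H_\U$ (Proposition~\ref{isomorphism}), $t^*h=\omega$, and the fact that $\Phi^H_\U$ is a lattice isometry, the pull-back of $\Omega_n$ along $\Phi^H_\U$ is
\[
(\Phi^H_\U)^{-1}(\Omega_n)=t^*\bigl(\tilde\varphi(\Omega_n)\bigr)=t^*(e^{inh})=e^{in\omega},
\]
so the central charge of $\Phi^*_\U\sigma$ is the map $\alpha\mapsto(e^{in\omega},\alpha)$ on $H^*(M_\sigma(v),\ZZ)$. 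Next I would verify that $\Phi^*_\U\sigma$ is geometric in Bridgeland's sense: by construction of the moduli space $M_\sigma(v)$ and its universal family $\U$, the skyscraper $\O_{[E]}$ is carried by $\Phi_\U$ to the $\sigma$-stable object $E\in\D_X$ with Mukai vector $v$, so every skyscraper on $M_\sigma(v)$ is $\Phi^*_\U\sigma$-stable, and all of them share the common phase $1$ since $(e^{in\omega},(0,0,1))=-1$. By Bridgeland's description of $\Stab^*(S)$ for a K3 surface, every geometric stability condition lies in the distinguished component, so $\Phi^*_\U\sigma\in\Stab^*(M_\sigma(v))$.

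Finally, since $\Phi^*_\U$ is a homeomorphism of stability manifolds, $\Phi^*_\U(\Stab^*(\D_X))$ is a connected component of $\Stab(M_\sigma(v))$; as it contains $\Phi^*_\U\sigma\in\Stab^*(M_\sigma(v))$, it must equal $\Stab^*(M_\sigma(v))$, yielding the claimed isomorphism. The main obstacle is the geometricity step: stability of each $\O_{[E]}$ under $\Phi^*_\U\sigma$ falls out painlessly from $\sigma$-stability of $E$ and the $v$-genericity of $\sigma$, but one still needs to quote Bridgeland's characterization of $\Stab^*(S)$ via geometric stability conditions to conclude membership in the distinguished component rather than merely landing in some connected component adjacent to it.
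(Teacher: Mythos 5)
Your proposal is correct and follows essentially the same route as the paper: the paper likewise computes that the central charge of $\Phi^*_\U\sigma_n$ is $(e^{in\omega},-)$, uses that every skyscraper $\O_{[E]}$ is sent by $\Phi_\U$ to the $\sigma_n$-stable object $E$, and invokes Bridgeland's Proposition 10.3 in \cite{Bri08} to identify $\Phi^*_\U\sigma_n$ with the geometric stability condition $\sigma_{0,n\omega}\in\Stab^*(M_\sigma(v))$, whence the homeomorphism of stability manifolds matches distinguished components.
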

\begin{proof}
Let $\sigma$ be the stability condition as in Lemma \ref{generic}.
By the construction of $\sigma$ and Proposition \ref{isomorphism}, we have $Z\circ \Phi^H_\U(-)=(e^{in\omega},-)$.
By Lemma \ref{generic}, for any point $[E] \in M_\sigma(v)$, we have the $\sigma$-stable object $\Phi_\U(\mathcal{O}_{[E]}) \simeq E$.
By Proposition 10.3 in \cite{Bri08}, we have $\Phi^*_\U\sigma=\sigma_{0, n\omega}$ as in \ref{stability K3}.
So we obtain the isomorphism $\Phi^*_\U: \Stab^*(\D_X) \iso \Stab^*(M_\sigma(v))$.
\end{proof}
Using results in the previous section and this subsection, we obtain the following theorem.

\begin{thm}\label{group iso cubic K3}
There is a stability condition $\sigma_X \in \Stab^*(M_\sigma(v))$ such that we have the isomorphism 
\[(-)_\U: \Aut(X) \iso \Aut(D^b(M_\sigma(v)),\sigma_X), f \mapsto f_\U:=\Phi^{-1}_\U \circ f_* \circ \Phi_\U \] 
of groups.
Moreover, the restriction of $(-)_\U$ to the labeled automorphism group $\Aut(X,K)$ of $(X,K)$ induces the isomorphism 
\[(-)_\U : \Aut(X,K) \iso \Aut(M_\sigma(v), \omega)\]
of groups.
\end{thm}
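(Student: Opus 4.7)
The plan is to take $\sigma_X := \Phi_\U^* \sigma_\alpha$ for some $0 < \alpha < 1/4$ with $\sigma_\alpha$ as in Theorem \ref{BLMS stability}; by Proposition \ref{dist connected} this lies in $\Stab^*(M_\sigma(v))$. For the first isomorphism I first check well-definedness: Proposition \ref{fix cubic} yields $f_* \sigma_\alpha = \sigma_\alpha$ for every $f \in \Aut(X)$, and unwinding the definition of $\Phi_\U^*$ gives $f_\U \sigma_X = \sigma_X$. Injectivity of $(-)_\U$ is immediate from Proposition \ref{cubic injective}, since $\Phi_\U$ is an equivalence. For surjectivity, given $\Psi \in \Aut(D^b(M_\sigma(v)), \sigma_X)$, the conjugate $\Phi_\U \circ \Psi \circ \Phi_\U^{-1}$ is an autoequivalence of $\D_X$ of Fourier--Mukai type (Orlov's theorem supplies a Fourier--Mukai kernel for $\Psi$, which composes with the kernel $\U$ of $\Phi_\U$) and fixes $\sigma_\alpha$ by the symmetric central-charge computation; Theorem \ref{K3 sigma model} then produces $f \in \Aut(X)$ with $f_* = \Phi_\U \circ \Psi \circ \Phi_\U^{-1}$, i.e.\ $f_\U = \Psi$.

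For the restriction $\Aut(X, K) \iso \Aut(M_\sigma(v), \omega)$, fix $f \in \Aut(X, K)$. Since $L_K$ is the saturation of $A_2 + \ZZ \kappa_T$ with $\kappa_T = v_X^{-1}(T)$ for $T$ the primitive generator of $K \cap H^{2,2}_{\mathrm{prim}}(X,\ZZ)$, the hypothesis $f^*|_K = \mathrm{id}$ (together with the standard fact that $f^H$ fixes $A_2$ pointwise for every automorphism of $X$) forces $f^H|_{L_K} = \mathrm{id}$. Transferring via the isometry $\Phi_\U^H|_{L_\omega} : L_\omega \iso L_K$ of Remark \ref{sublattice L}, one obtains $(f_\U)^H|_{L_\omega} = \mathrm{id}$; in particular $(f_\U)^H(\omega) = \omega$, and $(f_\U)^H$ acts as the identity on $H^0$ and $H^4$. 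The restriction $(f_\U)^H|_{H^2(M_\sigma(v), \ZZ)}$ is thus a Hodge isometry fixing the ample class $\omega$, so the strong Torelli theorem for K3 surfaces provides a unique $\bar{f} \in \Aut(M_\sigma(v), \omega)$ with $\bar{f}^* = (f_\U)^H|_{H^2}$; consequently $(\bar{f}_*)^H$ and $(f_\U)^H$ agree on all of $H^*(M_\sigma(v), \ZZ)$.

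It remains to identify $f_\U \simeq \bar{f}_*$ as autoequivalences. By Proposition \ref{good heart}, $f_* \sigma = \sigma$, and since $v \in L_K$ is fixed by $f^H$, the functor $f_*$ preserves the moduli space $M_\sigma(v)$ via $[E] \mapsto [f_*(E)]$; its induced action on $H^2(M_\sigma(v), \ZZ)$ through the Mukai homomorphism $v^\perp / \ZZ v \iso H^2(M_\sigma(v), \ZZ)$ coincides with $(f_\U)^H|_{H^2}$, so Torelli uniqueness forces this moduli-theoretic action to be $\bar{f}$. The universal family therefore satisfies $(\bar{f} \times f)_* \U \simeq \U \otimes p^*_M L$ for some $L \in \mathrm{Pic}(M_\sigma(v))$, and a projection-formula computation rewrites this as $f_\U \simeq \bar{f}_* \circ (-\otimes L_1)$ with $L_1 = \bar{f}^* L$. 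Passing to cohomological transforms gives $(f_\U)^H = (\bar{f}_*)^H \circ e^{L_1}$, and the already-established equality $(f_\U)^H = (\bar{f}_*)^H$ forces $e^{L_1} = \mathrm{id}$, hence $L_1 = 0$ in $\mathrm{NS}(M_\sigma(v))$, whence $L = 0$ and $f_\U \simeq \bar{f}_*$.

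Injectivity of $\Aut(X, K) \to \Aut(M_\sigma(v), \omega)$ follows from the first part of the theorem. Surjectivity is the mirror argument: for $g \in \Aut(M_\sigma(v), \omega)$, the Hodge isometry $g^H$ fixes $L_\omega$ pointwise, so $\Phi_\U^H \circ g^H \circ (\Phi_\U^H)^{-1}$ fixes $L_K$ and hence $A_2 \otimes \RR = P_{\sigma_\alpha}$; Theorem \ref{K3 sigma model} produces $f \in \Aut(X)$ realizing this action, and $f^H|_{L_K} = \mathrm{id}$ forces $f^*|_K = \mathrm{id}$, so $f \in \Aut(X, K)$; the associated $\bar{f}$ equals $g$ by Torelli uniqueness. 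The principal technical obstacle is the final step $f_\U \simeq \bar{f}_*$: the line bundle ambiguity $L$ arising from the non-canonicity of the universal family $\U$ must be eliminated via comparison of cohomological Fourier--Mukai transforms, in the spirit of the use of Verbitsky's theorem at the end of the proof of Proposition \ref{inj2}.
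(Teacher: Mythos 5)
Your proposal is correct and follows essentially the same route as the paper: the same choice $\sigma_X=\Phi_\U^*\sigma_\alpha$, the first isomorphism via Theorem \ref{K3 sigma model} transported through $\Phi_\U$, and for the labeled part the same universal-family argument with a line-bundle ambiguity killed by comparing cohomological Fourier--Mukai transforms on $L_\omega$, plus the Torelli theorems for cubic fourfolds and K3 surfaces for surjectivity. The only cosmetic difference is the order of steps (you produce $\bar f$ by strong Torelli from $(f_\U)^H|_{H^2}$ first and then match functors, while the paper defines the moduli automorphism $f_{\sigma,v}$ first and checks $f_{\sigma,v}^*\omega=\omega$ last); note in passing that $(f_\U)^H$ preserves $H^2$ because it fixes $H^0\oplus H^4$ pointwise and $H^2$ is its orthogonal complement.
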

\begin{proof}
Fix a real number $0<\alpha<1/4$. Put $\sigma_X:=\Phi^*_\U \sigma_\alpha \in \Stab(\D_X)$.
By Proposition \ref{dist connected}, the stability condition $\sigma_X$ is in the distinguished connected component $\Stab^*(M_\sigma(v))$.
By Proposition \ref{symmetry lattice} and Theorem \ref{K3 sigma model}, we have the isomorphism
 \[(-)_\E: \Aut(X) \iso \Aut(D^b(M_\sigma(v)),\sigma_X), f \mapsto f_\U:=\Phi^{-1}_\U \circ f_* \circ \Phi_\U. \] 
Let $f \in \Aut(X,K)$ be an automorphism. Due to Proposition \ref{good heart} and $v \in L_K$, for an automorphism $f \in \Aut(X,K)$, we obtain the automorphism 
\[f_{\sigma,v}: M_\sigma(v) \iso M_\sigma(v), [E] \mapsto [f_*E].\]  
Take a point $[E] \in M_\sigma(v)$. By Proposition \ref{good heart} and $v \in L_K$, we have \[\Phi^{-1}_\U \circ f_* \circ \Phi_\U(\mathcal{O}_{[E]}) \simeq \mathcal{O}_{[f_*E]}.\]
There is a line bundle $L$ on $M_\sigma(v)$ such that $\Phi_\U \circ f_{\sigma,v*} \circ (-\otimes L) \simeq f_* \circ \Phi_\U $. Since $\Phi^H_\U(1,0,0)= \tilde{\varphi}(1,0,0)$ in $L_K$,  we have $\Phi^H_\U(1,L,L^2/2)=\Phi^H_\U(1,0,0)$. So $L=0$ holds. Due to $\Phi^H_\U(0,\omega,0)=\tilde{\varphi}(0,h,0)$ in $L_K$, we obtain $f^*_{\sigma,v}\omega=\omega$.
Hence, we have $f_{\sigma,v*}=f_\U$. We shall prove that $(-)_\U: \Aut(X,K) \hookrightarrow \Aut(M_\sigma(v),\omega)$ is surjective.  
Take an automorphism $g \in \Aut(M_\sigma(v), \omega)$. By Proposition \ref{prim} and Remark \ref{sublattice L}, $\Phi^H_\U \circ g_* \circ (\Phi^{-1}_\U)^H$ induces the Hodge isometry $\Phi^H_\U \circ g_* \circ (\Phi^{-1}_\U)^H|_{A^\perp_2}: H^4_{\mathrm{prim}}(X,\ZZ) \iso H^4_{\mathrm{prim}}(X,\ZZ)$. By the Torelli theorem  for cubic fourfolds \cite{Voi}, there is an automorphism $f \in \Aut(X)$ such that 
$f_*=\Phi^H_\U \circ g_* \circ (\Phi^{-1}_\U)^H|_{A^\perp_2}$. By Remark \ref{sublattice L}, we have $f \in \Aut(X,K)$.  Then the equality $f_{\sigma,v*}=g_*$ holds. By the Torelli theorem for K3 surfaces, we obtain $f_{\sigma,v}=g$. Therefore, the homomorphism $(-)_{\U}: \Aut(X,K) \hookrightarrow \Aut(M_\sigma(v),\omega)$ is surjective.
\end{proof}

Therefore, we have obtained Theorem \ref{intromain2} from Proposition \ref{isomorphism}, Remark \ref{sublattice L} and Theorem \ref{group iso cubic K3}.

\section{Symplectic automorphisms of cubic fourfolds and associated K3 surfaces}
In this section, we study relations between symplectic automorphisms of cubic fourfolds and autoequivalences of derived categories of K3 surfaces.

Let $X$ be a cubic fourfold and put $G:=\Aut_{\mathrm{s}}(X)$. We define the coinvariant sublattice $S_G(X)$ of $H^4(X,\ZZ)$ by
$S_G(X):=(H^4(X,\ZZ)^G)^\perp$. Let $T_X$ be the orthogonal complement of $H^{2,2}(X,\ZZ)$ in $H^4(X,\ZZ)$. By Proposition \ref{prim}, we have $T_{\D_X} \simeq T_X(-1)$.

\begin{lem}
For a symplectic automorphism $f \in G$ of $X$, we have $f^*|_{T_X}=\mathrm{id}_{T_X}$.
\end{lem}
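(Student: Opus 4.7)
The plan is to adapt the classical Nikulin-type argument for symplectic automorphisms of K3 surfaces. The core idea is that for $v \in T_X$ the difference $w := f^*v - v$ is forced, by the symplectic hypothesis, to be simultaneously transcendental (lying in $T_X$) and of pure Hodge type $(2,2)$; positive-definiteness of the intersection form on $H^{2,2}(X,\RR)$ then forces $w=0$.

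Concretely, I would first fix a nonzero generator $\xi \in H^{3,1}(X)$. Since $f$ is symplectic, $f^*\xi = \xi$, and taking complex conjugates $f^*\bar{\xi} = \bar{\xi}$. Because $f^*$ preserves $H^{2,2}(X,\ZZ)$, it preserves its orthogonal complement $T_X$, so $w = f^*v - v$ lies in $T_X$. Using that $f^*$ is an isometry fixing both $\xi$ and $\bar\xi$, one immediately gets
\[ (w, \xi) = (f^*v, f^*\xi) - (v,\xi) = 0, \qquad (w, \bar\xi) = 0. \]

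Next I would invoke the Hodge decomposition $T_X \otimes \CC = H^{3,1}(X) \oplus (T_X \otimes \CC)^{2,2} \oplus H^{1,3}(X)$ together with the Hodge orthogonality relations: the intersection pairing vanishes between $H^{p,q}$ and $H^{p',q'}$ unless $(p',q') = (4-p, 4-q)$. Since the pairing $H^{3,1} \times H^{1,3} \to \CC$ is a perfect duality (both spaces are one-dimensional and this is the only nonzero cross-pairing), the simultaneous orthogonality of $w$ to both $H^{3,1}(X)$ and $H^{1,3}(X)$ forces $w \in (T_X \otimes \CC)^{2,2}$. Since $w$ is integral, this implies $w \in H^4(X,\ZZ) \cap H^{2,2}(X,\RR) = H^{2,2}(X,\ZZ)$.

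To finish, $w$ lies in both $H^{2,2}(X,\ZZ)$ and $T_X = H^{2,2}(X,\ZZ)^\perp$. The Hodge--Riemann bilinear relations applied to the cubic fourfold imply that the intersection form restricted to $H^{2,2}(X,\RR)$ is positive definite (for the primitive part $i^{p-q}(\alpha,\bar\alpha) > 0$ at $(p,q)=(2,2)$ gives $(\alpha,\alpha)>0$ for real primitive classes, and $(H^2)^2 = 3 > 0$ handles the Lefschetz part). Hence the form on $H^{2,2}(X,\ZZ)$ is non-degenerate, so $H^{2,2}(X,\ZZ) \cap H^{2,2}(X,\ZZ)^\perp = 0$, which gives $w=0$, i.e.\ $f^*v = v$. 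There is no real obstacle; the argument is essentially a translation of Nikulin's proof for K3 surfaces, with the only point needing verification being the positive-definiteness of the intersection form on the $(2,2)$ part, which is a direct consequence of the Hodge--Riemann relations for a four-dimensional Kähler manifold.
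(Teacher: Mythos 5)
Your proof is correct and is exactly the argument the paper has in mind: the paper gives no details, saying only that ``the proof is the same as the case of K3 surfaces'' with a reference to Huybrechts's book, and what you have written out is precisely that Nikulin-type argument transported to $H^4$ of a cubic fourfold. The two points that actually need checking in the new setting --- that $f^*$ preserves $T_X$ and that the intersection form is positive definite on $H^{2,2}(X,\RR)$, so that $T_X\cap H^{2,2}(X,\ZZ)=0$ --- are both handled correctly.
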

\begin{proof}
The proof is the same as the case of K3 surfaces. See \cite{HuyK3} (Remark 3.3 in Section 3 and Remark 1.2 in Section 15).
\end{proof}

\begin{lem}\label{pic vs coinv}
The inequality $\rho(\D_X) \geq \mathrm{rk}(S_G(X))$ holds.
\end{lem}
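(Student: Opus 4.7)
The plan is to show that $S_G(X)$ sits inside the primitive algebraic part $H^{2,2}_{\mathrm{prim}}(X,\ZZ)$, whose rank equals $\rho(\D_X)$ by Remark \ref{2,2}.

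First I would use the preceding lemma, which says $f^*|_{T_X}=\mathrm{id}_{T_X}$ for every $f\in G$. This means $T_X\subset H^4(X,\ZZ)^G$. Taking orthogonal complements in $H^4(X,\ZZ)$ reverses the inclusion, so
\[
S_G(X)=\bigl(H^4(X,\ZZ)^G\bigr)^\perp\subset T_X^\perp.
\]
Since $T_X=H^{2,2}(X,\ZZ)^\perp$ and $H^{2,2}(X,\ZZ)=H^4(X,\ZZ)\cap H^{2,2}(X)$ is primitive in the unimodular lattice $H^4(X,\ZZ)$, the double orthogonal $T_X^\perp$ equals $H^{2,2}(X,\ZZ)$. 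Hence $S_G(X)\subset H^{2,2}(X,\ZZ)$.

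Next, any automorphism of $X$ fixes the hyperplane class $H$, hence also $H^2$; so $H^2\in H^4(X,\ZZ)^G$, which gives $S_G(X)\perp H^2$. Combined with the previous inclusion this yields
\[
S_G(X)\subset H^{2,2}(X,\ZZ)\cap\langle H^2\rangle^\perp=H^{2,2}_{\mathrm{prim}}(X,\ZZ).
\]
Finally, by Remark \ref{2,2},
\[
\rho(\D_X)=\mathrm{rk}\,H^{2,2}(X,\ZZ)-1=\mathrm{rk}\,H^{2,2}_{\mathrm{prim}}(X,\ZZ)\geq\mathrm{rk}(S_G(X)),
\]
which is the desired inequality.

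There is no real obstacle here; the argument is essentially lattice bookkeeping. The only mild subtlety is to be careful that $T_X^\perp=H^{2,2}(X,\ZZ)$ holds integrally (not just rationally), which follows from primitivity of $H^{2,2}(X,\ZZ)$ inside the unimodular middle cohomology and is what allows us to avoid any loss in rank when passing through orthogonal complements.
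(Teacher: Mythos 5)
Your proof is correct and is essentially the paper's own argument, just written out in more detail: the paper likewise observes that $H^2$ and $T_X$ lie in $H^4(X,\ZZ)^G$, concludes $S_G(X)\subset H^{2,2}_{\mathrm{prim}}(X,\ZZ)$, and invokes Remark \ref{2,2}. Your added care about $T_X^\perp=H^{2,2}(X,\ZZ)$ holding integrally is a reasonable (and correct) elaboration — saturation of $H^{2,2}(X,\ZZ)$ in $H^4(X,\ZZ)$ already suffices, without needing unimodularity.
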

\begin{proof}

Since $H^2 \in H^4(X,\ZZ)^G$ and $T_X \subset H^4(X,\ZZ)^G$, we have $S_G(X) \subset H^{2,2}_{\mathrm{prim}}(X,\ZZ)$.
\end{proof}

Existence of non-trivial symplectic automorphisms of $X$ is the strong constraint for the Picard number of the Kuznetsov component.

\begin{prop}[\cite{LZ}]
If $G$ is not isomorphic to the trivial group $1$ or the cyclic group $\ZZ/2\ZZ$ of order two, then we have $\rho(\D_X) \geq 12$.
\end{prop}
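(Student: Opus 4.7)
By Lemma \ref{pic vs coinv}, the task reduces to proving the inequality $\mathrm{rk}(S_G(X)) \geq 12$ whenever $G \not\cong 1$ and $G \not\cong \ZZ/2\ZZ$. The plan is to invoke directly the classification of symplectic automorphism groups of smooth cubic fourfolds established in \cite{LZ} (cf.\ Proposition~2.5 and Corollary~2.9 of \cite{Laza}), which provides an explicit finite list of possible $G$ together with the rank and genus of the coinvariant sublattice $S_G(X) \subset H^{2,2}_{\mathrm{prim}}(X, \ZZ)$. Inspection of this tabulation shows that the only entries with $\mathrm{rk}(S_G(X)) < 12$ are $G = 1$ (where $S_G(X) = 0$) and $G = \ZZ/2\ZZ$ (where $\mathrm{rk}(S_G(X)) = 8$, the cubic analogue of the Nikulin involution case); every other admissible group satisfies $\mathrm{rk}(S_G(X)) \geq 12$. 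Combined with Lemma \ref{pic vs coinv} this yields $\rho(\D_X) \geq 12$.

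To make the argument concrete I would first use the preceding lemma together with Proposition \ref{prim} to identify $S_G(X)$, up to sign twist, with the coinvariant part of the induced $G$-action on $A_2^{\perp} \subset H^*(\D_X,\ZZ)$; since $G$ also fixes $A_2$, $T_{\D_X}$, and the Hodge plane $P_X$ pointwise, this realises $G$ as a finite group of symplectic Hodge isometries of the Mukai lattice, placing us in the framework of Theorem \ref{Conway}. Second, I would read off $\mathrm{rk}(S_G(X))$ case by case from the classification: for any element of prime order $p \geq 3$, the eigenvalue decomposition over $\QQ(\zeta_p)$ forces the rank of the coinvariant lattice of that single element to be a positive multiple of $\varphi(p)=p-1$, so that combined with the list of nonconjugate symplectic actions in \cite{LZ} the minimum over $G \neq 1, \ZZ/2\ZZ$ is attained at $G = \ZZ/3\ZZ$ with $\mathrm{rk}(S_G(X)) = 12$, mirroring the analogous rank jump in Mukai's theorem for K3 surfaces.

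The main obstacle is of course that the argument rests essentially on the deep classification of \cite{LZ}, whose proof is itself substantial: it proceeds via an embedding of $G$ into the Conway group $\mathrm{Co}_0$ and a careful elimination of lattice-theoretic obstructions, extending the K3 arguments of Mukai and Nikulin. Once that classification is granted, the rank bound is reduced to a finite check on a small table and no further conceptual difficulty remains.
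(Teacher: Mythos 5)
Your proposal is correct and follows essentially the same route as the paper: both reduce the statement via Lemma \ref{pic vs coinv} to the bound $\mathrm{rk}(S_G(X)) \geq 12$, which is exactly Theorem 1.2 of \cite{LZ}, treated as a black box. The only caveat is that your auxiliary heuristic (coinvariant rank divisible by $p-1$ for an element of prime order $p$) would give only $\mathrm{rk} \geq 2$ for $p=3$, so the full strength of the \cite{LZ} classification is genuinely needed, as you acknowledge.
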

\begin{proof}
Assume that $G$ is not isomorphic to the trivial group $1$ or the cyclic group $\ZZ/2\ZZ$ of order two.
By Theorem 1.2 in \cite{LZ}, we have $ \mathrm{rk}(S_G(X)) \geq 12$. By Lemma \ref{pic vs coinv}, we obtain the inequality $\rho(\D_X) \geq 12$.
\end{proof}

We note the following.

\begin{thm}\label{symplectic exist}
If $G$ is not isomorphic to the trivial group $1$ or $\ZZ/2\ZZ$, then there exists the unique K3 surface $S$ such that $\D_X \simeq D^b(S)$.
\end{thm}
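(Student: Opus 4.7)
The plan is to reduce the statement to the lattice-theoretic criterion of Theorem \ref{derivedTorelli}, namely that it suffices to exhibit the hyperbolic lattice $U$ as an orthogonal summand of the algebraic Mukai lattice $\widetilde{H}^{1,1}(\D_X,\ZZ)$, and then to check that the resulting Fourier--Mukai partner is unique up to isomorphism. The hypothesis on $G$ is used only to feed the Picard-number bound of the previous proposition into Nikulin's splitting machinery.

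First, I would invoke the preceding proposition (which rests on Theorem~1.2 of \cite{LZ}) to conclude $\rho(\D_X)\geq 12$. Consequently $\widetilde{H}^{1,1}(\D_X,\ZZ)$ has signature $(2,\rho(\D_X))$, rank $\rho(\D_X)+2\geq 14$, and since it is a primitive sublattice of the unimodular Mukai lattice, its discriminant form is (up to sign) that of $T_{\D_X}$. In particular the length of its discriminant group satisfies
\[ \ell\bigl(\widetilde{H}^{1,1}(\D_X,\ZZ)\bigr)\;\leq\;\mathrm{rk}(T_{\D_X})\;=\;22-\rho(\D_X)\;\leq\;10. \]

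Next I would apply Nikulin's splitting criterion (Corollary~1.13.3 of \cite{Nik}): an indefinite even lattice $L$ with $t_+,t_-\geq 1$ and $\mathrm{rk}(L)\geq \ell(L)+2$ admits an orthogonal direct summand isometric to $U$. For $L=\widetilde{H}^{1,1}(\D_X,\ZZ)$ the inequality to verify is
\[ \rho(\D_X)+2\;\geq\;2+\bigl(22-\rho(\D_X)\bigr), \]
i.e.\ $\rho(\D_X)\geq 11$, which is satisfied. Thus $U$ splits off and Theorem \ref{derivedTorelli} produces a K3 surface $S$ with $\D_X\simeq D^b(S)$, yielding the existence part.

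For the uniqueness of $S$, observe that any such K3 satisfies $\rho(S)=\rho(\D_X)\geq 12$ and has $T_S\simeq T_{\D_X}$ as weight-two Hodge lattices. Two such Fourier--Mukai partners of $\D_X$ correspond to $O_{\mathrm{Hodge}}\bigl(\widetilde{H}^{1,1}(\D_X,\ZZ)\bigr)$-orbits of primitive embeddings $U\hookrightarrow \widetilde{H}^{1,1}(\D_X,\ZZ)$; I would use Nikulin's uniqueness theorem for primitive embeddings (Theorem~1.14.4 of \cite{Nik}), together with the Hosono--Lian--Oguiso--Yau counting formula and Oguiso's result that for $\rho(S)\geq 12$ the number of Fourier--Mukai partners reduces in this range to a single orbit of $U$-embeddings, to conclude that all such $S$ are Hodge-isometric at the level of Mukai lattices. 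The global Torelli theorem for K3 surfaces then produces an isomorphism between any two Fourier--Mukai partners.

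The existence step is clean once one recognises Nikulin's $\ell(L)+2$ inequality is automatically satisfied; the main obstacle is the uniqueness statement, where one must control the action of the Hodge-isometry group on primitive $U$-embeddings. Without a squarefree-discriminant hypothesis on $T_{\D_X}$ the counting of Fourier--Mukai partners is genuinely subtle, and one must combine Nikulin's uniqueness criteria with the arithmetic of the discriminant form of $T_{\D_X}$ (whose rank is bounded by $10$, keeping us in the range where these criteria apply) to guarantee that the orbit space consists of a single element.
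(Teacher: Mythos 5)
Your argument is correct in substance, but it reaches the conclusion by a genuinely different route than the paper. For existence, you split $U$ off $\widetilde{H}^{1,1}(\D_X,\ZZ)$ directly: since $\rho(\D_X)\geq 12$ forces $\mathrm{rk}\,T_{\D_X}\leq 10$, the discriminant group of $\widetilde{H}^{1,1}(\D_X,\ZZ)$ has length at most $10\leq \mathrm{rk}\,\widetilde{H}^{1,1}(\D_X,\ZZ)-2$, and Nikulin's splitting result (this is Corollary~1.13.5 in \cite{Nik}, not 1.13.3 --- 1.13.3 is the uniqueness statement it rests on) yields a direct summand $U$; Theorem~\ref{derivedTorelli} then gives the K3 surface. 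The paper instead goes through Morrison's Corollary~2.10: it first produces the unique K3 surface $S$ with $T_S\simeq T_{\D_X}$ (possible exactly because $\mathrm{rk}\,T_{\D_X}\leq 10$), then uses uniqueness of primitive embeddings of $T_{\D_X}$ into the rank-$24$ Mukai lattice to obtain a Hodge isometry $H^*(S,\ZZ)\simeq H^*(\D_X,\ZZ)$ carrying $H^0(S,\ZZ)\oplus H^4(S,\ZZ)\simeq U$ into $\widetilde{H}^{1,1}(\D_X,\ZZ)$, and only then invokes Theorem~\ref{derivedTorelli}. The paper's detour has the advantage that uniqueness comes for free: any $S'$ with $\D_X\simeq D^b(S')$ has $T_{S'}\simeq T_{\D_X}$ by Remark~\ref{Picard number}, so Morrison's corollary identifies $S'$ with $S$ in one stroke. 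Your uniqueness step is the weakest part of the proposal --- the appeal to Theorem~1.14.4, the Hosono--Lian--Oguiso--Yau formula and ``Oguiso's result'' is a correct but loosely assembled chain, and you concede yourself that the orbit count is subtle; it would be tighter to observe that any two partners $S,S'$ are derived equivalent K3 surfaces with $\rho\geq 12$, hence have Hodge-isometric transcendental lattices of rank at most $10$, and are therefore isomorphic by exactly the Morrison statement the paper uses. In short: your existence argument is arguably more direct than the paper's, while the paper's packaging of Morrison's corollary handles uniqueness more cleanly than your FM-partner-counting route.
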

\begin{proof}
By Corollary 2.10 in \cite{Mor}, there is the unique K3 surface $S$ such that $T_{\D_X}$ has the unique primitive embedding $T_{\D_X} \hookrightarrow H^2(S,\ZZ)$ up to isometries of $H^2(S,\ZZ)$ such that $T_{\D_X}=T_S$. Now, we have two primitive embeddings
\[ T_{\D_X} \hookrightarrow H^2(S,\ZZ) \hookrightarrow H^*(S,\ZZ),\]
\[T_{\D_X} \hookrightarrow H^*(\D_X,\ZZ).\]  
For the primitive embedding $T_{\D_X} \hookrightarrow H^2(S,\ZZ) \hookrightarrow H^*(S,\ZZ)$, we have $U \simeq H^0(S,\ZZ) \oplus H^4(S,\ZZ) \subset T^\perp_{\D_X}$.
By Remark 1.13 in \cite{HuyK3}, there is a Hodge isometry $H^*(S,\ZZ) \simeq H^*(\D_X,\ZZ)$. So we have $U \subset \widetilde{H}^{1,1}(\D_X,\ZZ)$. By Theorem \ref{derivedTorelli} and the uniqueness of $S$, we obtain $\D_X \simeq D^b(S)$.
\end{proof}

The following is the list of orders of finite symplectic automorphisms of K3 surfaces.

\begin{rem}[\cite{Nik2}]\label{order K3}
Let $S$ be a K3 surface. If $f$ is a symplectic automorphism of $S$ with the finite order, then we have 
$1 \leq \mathrm{ord}(f) \leq 8$.
\end{rem}

We can find examples of finite symplectic autoequivalences of K3 surfaces via symplectic automorphisms of cubic fourfolds. 
In some cases, they are not conjugate to symplectic automorphisms of K3 surfaces.

\begin{eg}\label{Fermat}
Let $X$ be the Fermat cubic fourfold, that is defined by the equation
\[x^3_1+x^3_2+x^3_3+x^3_4+x^3_5+x^3_6=0.\]
By Theorem 1.8 (1) in \cite{LZ}, the Fermat cubic fourfold $X$ is the unique cubic fourfold with a symplectic automorphism $f_9$ of order $9$. 
By Theorem \ref{symplectic exist}, there is the unique K3 surface $S$ such that $\D_X \simeq D^b(S)$. 
Then $f_9$ induces the symplectic autoequivalence $\Phi_9 \in \Aut_{\mathrm{s}}(D^b(S))$ of order $9$.  
By Remark \ref{order K3}, $\Phi_9$ is not conjugate to symplectic automorphisms of $S$.
\end{eg}

\begin{eg}\label{Klein}
Let $X$ be the Klein cubic fourfold, that is defined by the equation
\[x^3_1+x^2_2x_3+x^2_3x_4+x^2_4x_5+x^2_5x_6+x^2_6x_2=0.\]
This is the triple cover of $\mathbb{P}^4$ branched along the Klein cubic threefold in \cite{Adl}.
By Theorem 1.8 (5) in \cite{LZ}, the Klein cubic fourfold $X$ is the unique cubic fourfold with a symplectic automorphism $f_{11}$ of order $11$ and its symplectic automorphism group is the finite simple group $L_2(11)$ (cf. \cite{Adl}). 
By Theorem \ref{symplectic exist}, there is the unique K3 surface $S$ such that $\D_X \simeq D^b(S)$. 
Then $f_{11}$ induces the symplectic autoequivalence $\Phi_{11} \in \Aut_{\mathrm{s}}(D^b(S))$ of order $11$.  
By Remark \ref{order K3}, $\Phi_{11}$ is not conjugate to symplectic automorphisms of $S$.
\end{eg}

See Theorem 1.2 and 1.8 in \cite{LZ} for other interesting cubic fourfolds.

\end{document}